\newcommand{\chapter}{\part}
\theoremstyle{plain}
\newtheorem{theorem}{Theorem}[section]
\newtheorem{Theorem}[theorem]{Theorem}
\newtheorem{corollary}[theorem]{Corollary}
\newtheorem{Corollary}[theorem]{Corollary}
\newtheorem{Proposition}[theorem]{Proposition}
\newtheorem{Lemma}[theorem]{Lemma}
\theoremstyle{definition}
\newtheorem{Definition}[theorem]{Definition}
\newtheorem{Remark}[theorem]{Remark}
\newtheorem{Example}[theorem]{Example}
   \newcommand{\ov}{\overline}
\newcommand{\thmref}[1]{Theorem~\ref{#1}}
\newcommand{\secref}[1]{Section~\ref{#1}}
\newcommand{\lemref}[1]{Lemma~\ref{#1}}
\newcommand{\corref}[1]{Corollary~\ref{#1}}
\newcommand{\remarkref}[1]{Remark~\ref{#1}}
\newcommand{\defnref}[1]{Definition~\ref{#1}}
\newcommand{\propref}[1]{Proposition~\ref{#1}}
\newcommand{\eqqref}[1]{Equation~(\ref{#1})}
\DeclareMathOperator{\PSL}{PSL} \DeclareMathOperator{\SL}{SL}
 \DeclareMathOperator{\GL}{GL}
\DeclareMathOperator{\tr}{tr} 
\DeclareMathOperator{\Tor}{Tor}\DeclareMathOperator{\Diff}{Diff}
\DeclareMathOperator{\TD}{TD}\DeclareMathOperator{\Fx}{\mathrm{Fix}}
\DeclareMathOperator{\mult}{mult}\DeclareMathOperator{\rk}{rk}
\DeclareMathOperator{\invl}{inv}
\DeclareMathOperator{\codim}{codim} 
\DeclareMathOperator{\Aut}{Aut} \DeclareMathOperator{\PAut}{PAut} 
\DeclareMathOperator{\Bim}{Bim} 
\DeclareMathOperator{\Hom}{Hom}
\DeclareMathOperator{\NS}{NS} 
\DeclareMathOperator{\Bir}{Bir}  \DeclareMathOperator{\Cr}{Cr} 
\newcommand{\BZ}{\mathbb{Z}}
\newcommand{\BC}{\mathbb{C}}
\newcommand{\BP}{\mathbb{P}}
\newcommand{\BF}{\mathbb{F}}\newcommand{\BQ}{\mathbb{Q}}
\newcommand{\BR}{\mathbb{R}}
\newcommand{\al}{\alpha}
\DeclareMathSymbol{\twoheadrightarrow}  {\mathrel}{AMSa}{"10}
               \def\LL{{\mathcal L}} 
         \def\EE{{\mathcal E}}
\def\OO{{\mathcal O}} \def\II{{\mathcal I}}
\begin{document}

\title[Automorphism groups of $\BP^1-$bundles ]{  Automorphism groups of $\BP^1-$bundles over a non-uniruled base}

\author{Tatiana Bandman }\address{Department of Mathematics,
Bar-Ilan University,
Ramat Gan, 5290002, Israel}
\email{bandman@math.biu.ac.il}
\author{Yuri  G. Zarhin}
\address{Pennsylvania State University, Department of Mathematics, University Park, PA 16802, USA}
\email{zarhin@math.psu.edu}
\thanks{The second named author (Y.Z.) was partially supported by Simons Foundation Collaboration grant   \# 585711.
 Most  of this work was done in January--May 2022 during his stay at the Max-Planck Institut f\"ur Mathematik (Bonn, Germany), whose hospitality and support are gratefully acknowledged.}

\begin{abstract}
 
  In this survey we discuss   holomorphic $\BP^1-$bundles $p: X \to Y$  over a non-uniruled complex compact K\"ahler manifold Y, paying a special attention to the case when $Y$ is a complex torus.   We consider   the groups $\Aut(X)$ and $\Bim(X)$ of its biholomorphic and bimeromorphic automorphisms, respectively,   and  discuss when these  groups are bounded, Jordan,  strongly Jordan, or very Jordan.

 \end{abstract}

\subjclass[2010]{ 32M05,32M18,14E07, 32L05, 32J18, 32J27,   14J50,  57S25,}
\keywords{Automorphism groups of compact complex manifolds, algebraic dimension 0,  complex tori, conic bundles, Jordan properties of groups}
\maketitle
\tableofcontents 

\section
{Introduction}\label{introduction}
 In this survey   we  consider the groups $\Aut(X)$ and $\Bim(X)$ of  all biregular and  bimeromorphic self-maps, respectively, for a compact complex  connected K\"ahler manifold $X. $  If $X$ is projective, $\Bim(X)=\Bir(X)$  is  the  group 
of all birational transformations of $X $   (see \cite{Serre56}).  The manifolds we are going to deal with are of special type: $X$ has to be a $\BP^1-$bundle over a non-uniruled   compact complex connected manifold $Y.$

  In general, the  groups  $\Bim(X)$  may be very huge and non-algebraic (for example Cremona group $\mathrm{Cr}_n$ of birational transformation of the $n-$dimensional projective space).  Thus one is tempted to study properties of a group  via its  finite and /or abelian subgroups.    Namely, we are interested in the following
properties of groups.

 \begin{Definition}\label{groups} \begin{enumerate}   \item A group $G$ is called {\sl bounded} if the orders of its finite subgroups are bounded by a universal constant that depends only on $G$  (\cite[Definition 2.9]{Pop}).

\item A group $G$ is called {\sl Jordan} if there is a positive integer $J$ such that
every finite subgroup $B$ of $G$ contains an abelian subgroup $A$  that is normal in $B$ and such that the index $[B:A]\le J.$ The  smallest  such  $J$ is called the  {\sl Jordan constant} of $G,$    denoted by $J_G$.(\cite[Question 6.1] {Serre1}, \cite[Definition 2.1]{Pop},\cite{Pop14}).
\item  A Jordan group $G$ is called {\sl strongly Jordan} \cite{PS14,BZ17} if there is a positive integer $m$  such 
that every finite subgroup of $G$ is generated by at most $m$ elements.

\item  A group  $G$  is  {\sl very Jordan} (\cite{BZ20})   if there exist  a  commutative normal subgroup  $G_0$ of $G$ and a bounded group $F$ that sit in  a short exact sequence\begin{equation}\label{veryjordan}
1\to G_0\to G\to F\to 1.\end{equation}
\end{enumerate}\end{Definition}

In what follows    by {\bf Jorfan Properties}  we mean one of those described in \defnref{groups}.
The study of these properties were inspired  by the following fundamental 
results. 
\begin{Theorem}\label{Jordan}(M.-E.-C.  Jordan  (1878),  \cite {Jordan}, \cite[Theorem 9.9]{Serre}) 
Let $\BC$ be the field of complex numbers.
 Then $\GL_n=\GL(n,\BC)$ is  strongly  Jordan.\end{Theorem}

  \begin{Theorem}(J.-P. Serre  (2009), \cite[Theorem 5.3]{Serre1})
  $ \Cr_2= \Bir(\BP^2)$  is Jordan,  \  $J_{\Cr_2}\leq 2^{10}3^45^27.$\end{Theorem}

It was V.L.  Popov who asked   in \cite{Pop}   a question whether for an algebraic  variety $X$  the groups $\Aut(X)$ and $\Bir(X)$ are Jordan.
  The question  originated  an intensive and fruitful  activity. 
  It was proven that there are vast classes of  manifolds   (varieties)  with Jordan     groups $\Aut(X), \Bim(X),$ and $ \Bir (X), $
 see \secref{Known}. In particular, the Cremona group  $Cr_n=\Bir(\BP^n)$   appeared to be Jordan  for all $n$ (\cite{PS14}  and \cite{Bi}) (this is the   positive answer  to   a question formulated by J.-P.  Serre). 
In \secref{Known} we give  a  glimpse on richness of known facts about Jordan properties of $\Aut(X), \Bim(X)$ or $\Bir(X)$  for various types of varieties  $X.$  We do not pretend to give a complete picture. Our aim is to demonstrate
that the "worst" manifolds from this point of view are the uniruled  but not-rationally connected ones. For example, the group $\Bim(X)$ is not Jordan if $X$ is bimeromorphic to a product of a complex torus of positive algebraic dimension and the projective space $\BP^N,  N>0$  (\cite{Zar14}, \cite{Zar19}).

  In this survey we concentrate on the manifolds of this kind. Namely, our main object of consideration  are   $\BP^1-$bundles  over non-uniruled manifolds,
    i.e.,  triples $(X,p,Y)$
such that \begin{itemize}\item $X,Y$ are compact complex connected  K\"ahler manifolds;\item  $p: X\to Y$   is a holomorphic map from $X$ onto $Y;$
\item $Y$ is not uniruled;\item for every  point $y\in Y$ the fiber $p^*(y)$ is isomorphic to $\BP^1;$ in particular, is   irreducible  and  reduced.
 \end{itemize}

 We say that such  a triple $(X,p,Y)$ has  an {\it almost section }  $D$ if  an irreducible  analytic subset   $D\subset X, \ \codim (D)=1,$ meets a general fiber of $p$ at precisely one point  (see    \defnref{almost}). 
We say that such  a triple $(X,p,Y)$ (or $X$,  or morphism $p$)    is 
    {\it  scarce}, if  $X$ does {\it  not }  admit three distinct  almost  sections 
 $A_1,A_2,A_3$ such that  $A_1\cap A_2=A_1\cap A_3=A_2\cap  A_3 $ 
          (see \defnref{configuration}). We say that  a connected compact complex manifold $Y$ is {\it poor}   (\defnref{poor}) if it contains neither rational curves nor  analytic subsets of $\codim  1.$

The  facts that  we know  about  Jordan properties of $\BP^1-$bundles   $(X,p,Y)$    over non-uniruled 
 K\"ahler manifolds   are presented in the following: 

{\bf {Summary}}

\begin{enumerate}\item   $\Aut(X)$ is always Jordan (\cite {Kim}, for surfaces see also \cite{Zar15}) and even strongly Jordan (see \remarkref{Mundet});
\item If  morphism $p$ is scarce   then $\Aut(X)$  is very Jordan (\thmref{main} of this paper).

\item   If $Y$ is a torus and if $X$ is not a projectivization
of a decomposable vector bundle of rank 2 on $ Y,$ then the group  $\Aut(X)$ is strongly
Jordan  (\cite{kostya}).   
\item If $X,Y$ are  projective, and $X$ is not birational to   $Y\times\BP^1, $  then $\Bir(X)$ is   strongly Jordan  (\cite{BZ17});
\item If  $Y$ is a  poor manifold (see   \defnref{poor})  then  $\Bim (X)=\Aut(X)$ and is very Jordan( \cite{BZ20}).
\item  If   $Y$ is a  complex  torus  and there is no almost section of $p$  then $\Bim(X)$ is Jordan (\cite{kostya}).  In particular,  if $X$  is not the projectivization of a rank 2 vector
bundle on $Y$, then the group $\Bim(X)$ is strongly Jordan.
\item  If   $Y$ is a    complex torus   of positive   algebraic dimension and  $X$ is bimeromorphic  (birational,  if $Y$ is projective) to a direct product  $Y\times\BP^1$  then the group $\Bim (X)$ (respectively, $\Bir(X)$) is not Jordan  (\cite{Zar14,Zar19}).

\item If $Y$ is a complex torus of positive algebraic dimension,  $Y_a$   is its algebraic reduction,   $\LL$ is   the lift to $Y$ of a holomorphic line bundle on $Y_a,$ and $X$ is the projectivization of the rank 2  vector bundle $\LL\oplus\mathbf 1$ then $\Bim (X)$ is not Jordan   (\cite{Zar19}).
\item {\bf Open question.}   Assume that $Y$ is a complex torus of positive algebraic dimension and $X$ has no representation as in previous item. Is $\Bim(X)$ Jordan? 
\end{enumerate}

   Our goal is  to give a  review of the methods used to prove these facts.   The  unpublished previously results are provided with full proves.

  All manifolds are compact complex, and  connected,   if not stated otherwise.   All algebraic varieties are complex,   projective, irreducible, reduced. 
 $\BP^n, \BC^n$ are  complex projective    and affine spaces respectively, 
  $\BP^n_k, \BC^n_k$ are  projective    and affine spaces respectively over  an algebraically closed field $k.  $

    The structure of the survey is as follows. In  \secref{Jgroups} we provide facts and examples concerning bounded, 
 Jordan, and very Jordan groups.  In  \secref{manifolds} we  enumerate  Assumptions  and Notation and remind the notions  related  to manifolds and their maps.  
 In \secref{Known} we give examples of the  known facts about Jordan properties of $\Aut(X), \Bim(X)$ and $\Bir(X)$  for various types of manifolds $X.$  Our aim is to demonstrate a special role of  $\BP^1-$bundles over a non-uniruled base in this field.  In \secref{rationalbundles} we  provide some generalities on maps of  $\BP^1$- bundles.   In  \secref{conics}  we  deal with  the group $\Bim(X)  $ of a non-trivial rational bundle (in particar, projective conic bundle).
   In   Chapter  \ref{directproducts} we deal with  certain $\BP^1-$bundles  over complex tori. We present  a unified approach to   proving results of \cite{Zar14}
 and  \cite{Zar19}.  It is based on sympectic algebra, the highly useful tools for studying line bundles over tori
 and inspired by the work of D. Mumford \cite{Mum66}.
    In Chapter \ref{base} we consider $\BP^1$-bundles  $(X,p,Y)$ with scarce set of sections
  over a non-uniruled K\"{a}hler   base.  It contains certain  generalization and modification of the paper \cite{BZ20}.
 First, in \secref{automorphisms}, for a $\BP^1-$bundle $(X,p,Y)$ we consider the group  $\Aut(X)_p$ of those   automorphisms of $X$ that leave  every fiber of $p$ fixed. In three subsection we describe three different types of such automorphisms.  In \secref{Aut0}, under assumption that $Y$ is K\"{a}hler  and not uniruled and $p$ is scarce,  we  prove that the neutral component  $\Aut_0(X)$  of the complex Lie group $\Aut(X)$ is commutative, hence $\Aut(X)$ is very Jordan.
In \secref{tori} we prove that if $Y$ is poor then $p$ is scarce and $\Aut(X) $  is very Jordan.

{\bf Acknowledgements}. We are deeply grateful to Fr\'ed\'eric Campana, Igor Dolgachev, Lei Ni, Constantin Shramov and Vladimir L. Popov
for helpful stimulating discussions.  Our special thanks go to the referee, whose  numerous  valuable comments helped to improve the exposition.

\chapter{Preliminaries }
\label{preliminaries}

In this chapter we provide some backgrounds:
 properties of Jordan groups, the  Notation and Assumptions and definitions.

 \section {Jordan properties of groups} 
\label{Jgroups}

 In this section we recall the general facts about Jordan properties of  groups. 
The following properties follow easily form the \defnref{groups}

 1) Every finite group is bounded, Jordan, and very Jordan. 

2) Every commutative group is Jordan and very Jordan. 

3) Every finitely generated commutative group is bounded. Indeed, such a group is isomorphic to a  finite direct sum with every summand isomorphic to $\BZ$ or $\BZ/n\BZ$  where  $n$ is positive integer. 

4)  A subgroup of a  Jordan group is Jordan.     A subgroup of a very Jordan group is very Jordan.

5) ``Bounded" implies ``very Jordan", ``very Jordan" implies ``Jordan".

6)    ``Bounded"  implies ``strongly  Jordan."    On the other hand, ``very Jordan" does not imply ``strongly  Jordan." 
 For example, a direct sum of infinitely many copies of 
  $\BZ/2\BZ$ 
 is commutative but has finite subgroups with any given minimal number of generators.

\begin{Example}\label{jordanproperties1}
 The group  $\GL(n,\BZ)$ is bounded. It follows from  the following  Theorem of  Minkowski \cite[Section 9.1]{Serre}):

\begin{Theorem}\label{Minkovsky} (Minkowski, 1887).  If  an element  $a\in \GL(n,\BZ)$ is periodic, and $a={\bf 1}\mod m$   for $ \ m\ge  3,$  then  $ a=1.$\end{Theorem}

It follows that every finite subgroup $H\subset \GL(n,\BZ)$ embeds into $\GL(n,\BZ/{3\BZ})$,
 (there are much more precise bounds,   \cite[Theorem1.1]{Serre20}).
Since every finite subgroup of $\GL(n,\BQ)$ is conjugate to  a subgroup of 
$\GL(n,\BZ)$   (\cite [Lecture 1]{Serre20}),  the group $\GL(n,\BQ)$ is bounded as well.\end{Example}
\begin{Example}\label{jordanproperties2} The multiplicative group   $\BC^*$ of $\BC$ is commutative, very Jordan but not bounded. The same is valid for   the group of translations of a  complex torus of positive dimension.   
\end{Example} 
\begin{Example}\label{jordanproperties3} From \thmref{Jordan} it follows that the group $\GL(n,k)$ is strongly Jordan for every field $k$ of characteristic zero.  Moreover, every linear algebraic group over $k$ is  strongly  Jordan.  On the other hand, $\GL(n,k)$  is obviously not very Jordan if $n \ge 2$.
 \end{Example}

  The following precise values of Jordan constants for groups $\GL(n, \BC)$ were found by  M.J. Collins. 
\begin{Theorem}\label{Collins}(\cite[Theorems {\bf A} and {\bf B}]{Collins}).  For the Jordan constants   of  groups $\GL(n,\BC)$ the following relations hold.

(1) 
 \ \ $J_{\GL(n,\BC)}= (n+1)!$ if  $ n\ge 71$   
or $n=63,65,67,69.$ 

(2) 
\ \ $J_{\GL(n,\BC)}= 60^r\cdot r! $     if $20\le n\le 62$ or  $n=64,66,68,70,$ where $r=[n/2].$

\end{Theorem} 

 The  information on  values of Jordan constants for groups $\GL(n, \BC), \ n< 20,$  is given in  extensive tables provided in the same paper.

 \begin{Example}\label{jordanproperties4}  

 We will use below
analogues of the Heisenberg groups that were used by D. Mumford \cite{Mum66}.  Let 
\begin{itemize}\item
$\mathbf{K}$  be a  finite commutative group of order $N>1;$
\item
 $\mu_N \subset \BC^{*}$  be  the multiplicative group of $N$th roots of unity;
\item
 $\hat{\mathbf{K}}=\mathrm{Hom}(\mathbf{K},\mu_N)$ (the dual of $\mathbf{K}$).\end{itemize}

 The Mumford's theta group $\mathfrak{G}_{\mathbf{K}}$ 
  for  $\mathbf{K}$ 
is a group of matrices  of the type
$$\begin{pmatrix}
1 & \alpha &\gamma \\
0 & 1 & \beta \\
0 & 0 &  1
\end{pmatrix}$$
 where $ \alpha \in\hat{\mathbf{K}},$ \  $\gamma\in  \BC^{*},$  and $ \beta\in\mathbf{K}.$   The product of $ \alpha \in\hat{\mathbf{K}} $ and $ \beta\in\mathbf{K}$  is defined by a certain natural non-degenerate  alternating   bilinear form $e_{\mathbf{K}}$ on 
$\mathbf{H}_{\mathbf{K}}=\mathbf{K}\times\hat{\mathbf{K}}$  with values in $\BC^{*}$
\cite[p. 302]{Zar14}.
 This group may be included into 
 a short exact sequence  
$$1 \to \BC^{*} \to \mathfrak{G}_{\mathbf{K}} \to \mathbf{H}_{\mathbf{K}} \to 1$$
where the image of $\BC^*$  is the center of $\mathfrak{G}_{\mathbf{K}}.$

 Properties of $\mathfrak{G}_{\mathbf{K}}$  \cite[p. 302]{Zar14} imply that it is a {\sl theta group} attached
to the {\sl nondegenerate symplectic pair} $\left(\mathbf{H}_{\mathbf{K}}, e_{\mathbf{K}}\right)$ in a sense  
 of Chapter \ref{directproducts} below. By Theorem \ref{JordanTheta} below, $\mathfrak{G}_{\mathbf{K}}$
   is Jordan and  
$$J_{\mathfrak{G}_{\mathbf{K}}}=\sqrt{\#(\mathbf{H}_{\mathbf{K}})}=N=\#(\mathbf{K}).$$
In particular, if $K=\BZ/N\BZ$ then $J_{\mathfrak{G}_{\BZ/N\BZ}}=N.$  
\end{Example}

\begin{Example}\label{jordanproperties5}  The example of a  non-Jordan group is   given by $\SL(2, \ov \BF_p)$ where $\ov {\BF}_p$  is   the algebraic closure of a  prime finite field $\BF_p$ with   $p$ elements. 

Indeed, if  $q=p^n\ge 4,$ then $\mathrm{SL}(2,  {\BF_q})\subset \mathrm{SL}(2, \ov {\BF_p})$
(Here $\BF_q$ is the $q$-element field.)
  Group   $\mathrm{SL}(2,  {\BF_q})$ is   noncommutative, finite, and has   order $(q^2-1)q$.
Every  normal subgroup  $C\subsetneq \mathrm{SL}(2,  {\BF_q})$
consists of one or two scalars.  thus 
the indices
$$[\mathrm{SL}(2,  {\BF_q}):C] =(q^2-1)q/2 \ \mathrm{ or } \ (q^2-1)q$$ 
 are unbounded when $n$ tends to infinity. 
 Hence 
$\SL(2, \ov {\BF_p})$  is { not} Jordan.

\end{Example}

\begin{Remark}
\label{pJordan}
An analogue of the theorem of Jordan  holds for matrix groups over fields $k$ of 
 prime characteristic $p$ if one considers only finite subgroups, whose order is prime to $p$. 
 On the other hand, there are generalizations of the theorem of Jordan (Brauer-Feit \cite{BF}, Larsen-Pink \cite{LP}) that deal with arbitrary finite subgroups and take into account the order of their Sylow $p$-subgroups.  Their results led to the following definition \cite[Definition 1.6]{Hu}
 (that will be used in Remark \ref{diff}, part 4 below).
 
 A group $G$ is called {\sl $p$-Jordan}  if there exist positive integers $J$ and $e$ such that every finite subgroup $B$ of $G$ contains an abelian $p^{\prime}$-subgroup $A$ that is normal in $B$ and such that the index $[B:A] \le |B_p|^e  J$.  Here $|B_p|$ is the order of a Sylow $p$-subgroup of $B$.

\end{Remark}

\begin{Remark}\label{exactsequences}
Let $G$ be a group. 
 Assume that it may be included into the following exact sequence  of groups

$0\to H\to G\to F\to 0.$

\begin{enumerate}\item If $F$ is bounded and $H$ is bounded then  $G$ is bounded ; 
\item  If $H$ is very  Jordan and $F$ is bounded then  $G$ is  very Jordan; 

\item  If  $F$ is bounded     then  $G$ is Jordan  if and only if  $H$ is Jordan   (\cite[Lemma 2.11]{Pop});

\item  If  $H$ is bounded and $F$   is   strongly Jordan  then  $G$ is Jordan  (\cite[Lemma 2.8]{PS14}).

\item  $G$  being Jordan {\bf does   not} imply  that  $F$   is Jordan ;
\item $F$ and $H$ being Jordan {\bf does   not} imply  that  $G$ is Jordan. 
\end{enumerate}\end{Remark}

   We will need  the following modification of  \cite[Lemma 5.3]{BZ20}.
\begin{Lemma}\label{commutative}   Consider  a short exact sequence  of connected complex Lie groups:
$$0\to A\overset {i}{\rightarrow} B\overset {j}{\rightarrow} D\to 0.$$ 
Here $i$ is a closed holomorphic embedding and $j$ is surjective holomorphic. Assume that $D$ is a complex  torus and $A$ is isomorphic as a complex Lie group  either to $ 
 (\BC^+)^n$ or to $\BC^*.$  Then $B$ is commutative.\end{Lemma}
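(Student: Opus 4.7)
The plan is to deduce commutativity in two steps: first I would show that $A$ is central in $B$ by analyzing the adjoint/conjugation action, and then I would use that centrality to factor the commutator map through the compact manifold $D\times D$ and invoke a Liouville-type argument.

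Step 1 (centrality of $A$). Conjugation gives a holomorphic homomorphism of complex Lie groups
$$\rho\colon B\to \Aut(A),$$
where $\Aut(A)$ denotes the group of holomorphic Lie group automorphisms of $A$. Because $A$ is commutative, $\rho$ is trivial on $i(A)$ and therefore factors through a holomorphic homomorphism $\bar\rho\colon D\to\Aut(A)$. If $A\cong\BC^{*}$, then $\Aut(A)$ consists only of $z\mapsto z^{\pm 1}$, hence is discrete; as $D$ is connected, $\bar\rho$ is trivial. If $A\cong(\BC^{+})^{n}$, then $\Aut(A)=\GL_{n}(\BC)$, which sits as an open subset of $\mathrm{Mat}_{n}(\BC)\cong\BC^{n^{2}}$; each matrix entry of $\bar\rho$ is then a holomorphic function on the compact complex manifold $D$ and so is constant, and since $\bar\rho$ sends the identity to the identity, $\bar\rho$ is trivial. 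In either case conjugation by every element of $B$ fixes $A$ pointwise, i.e.\ $A$ is central in $B$.

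Step 2 (triviality of the commutator). Consider the holomorphic commutator map
$$c\colon B\times B\to B,\qquad c(b_1,b_2)=b_1b_2b_1^{-1}b_2^{-1}.$$
Since $D=B/A$ is commutative, $c$ takes values in $A$. Since $A$ is central by Step 1, $c$ is invariant under translation by $A$ in each variable, so it descends to a holomorphic map
$$\bar c\colon D\times D\to A.$$
The product $D\times D$ is a compact connected complex manifold. If $A\cong\BC^{*}$, then $\bar c$ is a nowhere-vanishing holomorphic function on $D\times D$, hence constant by the maximum modulus principle. If $A\cong(\BC^{+})^{n}\cong\BC^{n}$, then $\bar c$ is a holomorphic map from a compact complex manifold to $\BC^{n}$ and again is constant. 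Evaluating at $(\bar e,\bar e)$ gives the identity, so $\bar c\equiv e$, whence $c\equiv e$ on $B\times B$ and $B$ is commutative.

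The only delicate point is Step 1 in the additive case: one needs that $\Aut((\BC^{+})^{n})$ as a holomorphic Lie group is precisely $\GL_{n}(\BC)$ (not merely some larger group of continuous or $\BR$-linear automorphisms), and that the induced map $D\to\GL_n(\BC)$ is genuinely holomorphic — both follow from the fact that the sequence in question is an exact sequence of complex Lie groups and that $i$ is a closed holomorphic embedding, so conjugation is a holomorphic $B$-action on $A$ by $\BC$-linear automorphisms. Everything else is a straightforward application of compactness of $D\times D$ together with the two explicit descriptions of $A$.
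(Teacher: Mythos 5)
Your argument is correct and is essentially the same as the proof the paper invokes (verbatim from \cite[Lemma 5.3]{BZ20}, extended from $n=1$ to general $n$): first make $A$ central by noting that the conjugation action factors through a holomorphic homomorphism from the compact torus $D$ to $\Aut(A)$ (discrete for $\BC^*$, and $\GL_n(\BC)$ with constant matrix entries for $(\BC^+)^n$), then observe that the commutator map descends to a holomorphic map $D\times D\to A$, which is constant, hence trivial. No changes needed.
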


\begin{proof} The proof of this lemma  coincides verbatim with the proof of \cite[Lemma 5.3]{BZ20}  where the case  $n=1$  is treated.

   {\bf Step 1.}  First, let us prove    that $A$ is a central subgroup  in $B.$ Take any element $b\in  B.$ 
 Define  a holomorphic map $\phi_ b:A\to A, \ \phi_b(a)= bab^{-1}\in A$
for an element $a\in  A.$  Since  it depends holomorphically on $b,$  
we have  a holomorphic map $\xi :B\to \Aut(A),  b  \to \phi_b.$

 Since $A$ is commutative, for every $c\in A$
we have $\phi_{bc}=\phi_b.$ Thus   there is a well defined map $\psi$ fitting into the following commutative diagram \begin{equation}\label{kawamata}
\begin{aligned}
&        &   &                 &  {B}                              &                  &      &             & \notag \\
&        &j  & \swarrow  &                                        &{\searrow} &\xi   &               &  \notag \\
& D&   &                &\stackrel{\psi}{\rightarrow} &                  &       &\Aut(A)&
      \end{aligned},  \end{equation}
The map $\psi=\xi\circ j^{-1}$ is  defined at every point of $D.$  
 It is holomorphic (see, for example, \cite{OV}, \$ 3). 

Since $D$ is a complex torus,  and  $\Aut(A)$ is either $\GL(n,\BC)$ ( if $A= (\BC^+)^n$) or 
consists of  two elements, $id$ and $a \mapsto a^{-1}$
(if $A=\BC^*$), we have $\psi(D)=\{id\}.$  It follows that $A$ is a central subgroup of $B.$

     {\bf Step 2.}   Let us now show that $B$ is commutative.  Consider a  holomorphic  map  $\mathrm{com} :B\times B\to A$  defined by 
$\mathrm{com}(x,y)=xyx^{-1}y^{-1}.$  Since $A$ is a central subgroup of $B, $  similarly to    {\bf Step 1}
we get a holomorphic map  $D\times D\to A.$  It has to be constant since $D$ is a torus and  $A$ is either $ (\BC^+)^n$ or $\BC^*.$\end{proof}

\section{ Complex manifolds}\label{manifolds}

This section   contains preliminaries, Notation, and Assumptions that will be used further on.

By {\sl  (projective) variety} we   mean an algebraic variety that is  Zariski closed subset of a projective space  $\BP^n.$

Let $U\subset\BC^n$  be an open subset.   An analytic subset of $U$
is a closed subset $X\subset U$  such that for any $x\in X$  there exists an open
neighborhood $x \in V \subset U$  and holomorpphic functions $f_1,\dots,f_k: V\to\BC$
such that $X\cap V=\{f_1=0,\dots,f_k=0\}$  (\cite [Definition 1.1.23]{Huy}).

A complex space consist of a Hausdorff topological space
$X$  and a sheaf of rings   $\OO_X$ such that locally $(X,\OO_X)$ is isomorphic to an
analytic subset $Z \subset U\subset \BC^n$   endowed with the sheaf
$\OO_U/\II, $   where $\II$ is a sheaf of holomorphic functions with $Z = Z(\II)$(\cite [Definition 6.2.8]{Huy}).
   By the Chow Theorem   any closed analytic subset of complex projective space  is a  projective  variety.  (\cite[Chapter V,  Section D, 7]{GR},\cite[Proposition13]{Serre56}).

 A  complex manifold  is  a
complex space which is locally modeled on $Z = U\subset \BC^n$ and $\II=\{0\}$ (\cite [Example 6.2.9]{Huy}).
 In particular, it is smooth.

We will use the following

 {\bf Notation  and Assumptions.}
 
  \begin{enumerate} \item$\BZ,\BQ,\BR,\BC$ stand for the ring of integers and fields of rational, real, and complex numbers, respectively.
\item  In what follows, the ground field is $\BC$ if not indicated otherwise.

\item   $\Aut(X)$ stands for the group of   all biholomorphic   (or biregular, if $X$ is projective) automorphisms of  a  complex  manifold $X.$   
 The group $\Aut(X)$ of any complex compact manifold $X$ has a canonical structure of  complex (not necessarily connected) Lie group such that the action map $\Aut(X)\times X \to X$ is holomorphic (Theorem of  Bochner-Montgomery,  \cite{BM}).
\item  $\Aut_0(X)$  stands for the connected identity  component of  $ \Aut(X)$  (as a complex Lie group)  for a compact manifold $X$.
\item If $p:X\to Y$ is a morphism of complex manifolds, then $\Aut(X)_p$ is the subgroup of all  $ f\in \Aut(X)$  such that $p\circ f=p.$
\item For $f\in\Aut(X)$  we denote by $\mathrm{Fix}(f)$   the set of all fixed points of $f.$
\item $\cong$ stands for  `` isomorphic groups" (or isomorphic complex Lie groups if the groups involved are the ones),  and also for isomorphic line bundles; $\sim$ for biholomorphically  isomorphic complex manifolds; $\approx$   for bimeromorphic  or birational complex manifolds.
\item   $id$  stands for identity automorphism,  $\mathbf {I}$  stands for identity matrix.
\item  We say that a subset $U$ of a complex manifold   $X$ is   analytical  Zariski open if $U=X\setminus Z,$  where $Z$ is a closed  analytic subspace of $X.$
\item $\BP^n_{(x_0:...:x_n)}$ stands for  $\BP^n$ with homogeneous coordinates $(x_0:...:x_n).$
\item  $\BC_z, \ov{\BC}_z\sim\BP^1$  is the  complex line (extended complex line, respectively) with coordinate $z$.
\item $\BC^+$ and $\BC^*$ stand for   complex Lie groups $\BC$ and $\BC^*$ with additive and multiplicative group structure, respectively.
\item $\dim (X),  \ \dim_a(X) $ are  the  complex and algebraic dimensions of a compact complex manifold $X$, respectively.
\item By  $\mathbf {pr}$ we  denote the natural projection $Y\times\BP^1\to Y.$
\item For an  element $\psi\in\PSL(2,\BC )$ we denote $\TD(\psi) $  the number $$\TD(\psi):=\frac{\tr (F)^2}{\det ( F)}
,$$ where $F\in \GL(2,\BC)$ is any representative of $\psi,$ and $\tr$ and $\det$ stand for trace and determinant, respectively. 
\item  A rational curve in $X$ is the  image of a non-constant holomorphic map $\BP^1\to X.$

\item $\mathbf 1$   or  $\mathbf 1_Y$  is a trivial holomorphic line bundle   $Y\times\BC$  over  a manifold $Y.$
\item  For a rank 2 holomorphic vector bundle   $\EE$ over  $Y$ we write $ \BP(\EE)$  for  the 
 $\BP^1-$bundle that is the projectivization 
of $\EE.$
\item If $\LL$ is a holomorphic line bundle over $Y$   and $\EE=\LL\oplus \mathbf 1_Y$ then we call $\ov \LL= \BP(\EE)$ {\it the closure }   of $\LL$ viewed as a complex manifold.

\item $\BC  (Z)$ stands for  the  field of rational functions on an irreducible complex projective variety $Z$.  
\item    Let $X,Y$ be two compact connected irreducible reduced analytic complex spaces. A meromorphic map   $f:X\to Y$ relates to every point  $x\in X$  a subset $f(x)\subset Y$
(the image of $x$)  such that the  following conditions are met  \begin{enumerate}\item The graph $G_f:=\{(x,y) \ | y\in f(x)\subset X\times Y\} $ is a connected irreducible
complex analytic subspace of $X\times Y$ with $\dim (G_f)=\dim ( X);$
\item There exist an open dense subset $X_0\subset X$ such, that $f(x)$ consists of one point for every $x\in X_0.$ \end{enumerate} 
\item We say that a property is valid for the  {\it general} point $x\in X$ if it is valid for every point from a certain (analytical)   Zariski open dense subset of $X.$   A  property is valid for the general fiber of a holomorphic map $f: X\to Y$ if it is valid for the fiber  $f^{-1}(y) ,$ for every point 
$y$  of a certain (analytical)   Zariski open dense subset of $Y.$  


\end{enumerate}

 \begin{Definition}\label{covering} Following \cite{Fu81}, we define a covering family of rational curves for a compact complex connected manifold $X$ as a pair of morphisms $p:Z\to T$ and $ \psi:Z\to X$  of compact irreducible complex spaces if the following conditions are satisfied: 
 
 \begin{itemize}
 \item[(1)]
 $\psi $ is surjective;   
 \item[(2)]
   there is a dense analytical  Zariski open subset $U\subset T$ such that for $t\in U,$  the fiber $Z_t=g^{-1}(t)\sim\BP^1$ and $\dim (\psi(Z_t))=1.$
   \end{itemize}\end{Definition}
 
   Manifolds $X$ admitting a covering family with this property are called  {\bf uniruled} (\cite[Definition  2.1, Lemma 2.2] {Fu81}).

\begin{Remark}\label{uni-Kod}
The Kodaira dimension $\kappa(X)=-\infty$ if $X$  is  unuruled compact complex manifold (\cite[Remark, p. 691]{Fu81},\cite[Corollary IV.1.11]{Kollar})
 In low dimensions the converse is true:
\begin{Theorem}\label{Kodaira}(\cite{Mi88}  for projective manifolds, \cite{HP} for  non-projective  ones).
 Let $X$ be a compact K\"{a}hler manifold of
dimension at most 3. Then X is uniruled if   and only if $\kappa(X) = -\infty.$\end{Theorem}\end{Remark}

\begin{Remark}\label{Fujiki}{\bf Fujiki Theorems.} It was proven by A. Fujiki  for a compact connected complex manifold $Y$ that \begin{enumerate}\item
If          $Y$  contains no rational curves   then 
  every meromorphic map $f:X\to Y$  is holomorphic for any complex manifold $X$ (\cite{Fu80}).
 \item  $\Aut_0(Y)$  is isomorphic to a complex torus  $\Tor(Y)$  (unless it is trivial) if  $Y$ is  K\"ahler  and either non-uniruled \cite[Proposition 5.10]{Fu78}
 or has non-negative Kodaira dimension \cite[Corollary 5.11]{Fu78}. 
  \end{enumerate}\end{Remark}

The next statement (\cite[Proposition 1.4]{BZ20} is similar to Lemma 3.1 of \cite{Kim}.
\begin{Proposition}\label{abounded}
Let $X$ be a connected complex compact   K\"{a}hler   manifold  and   $F=\Aut(X)/\Aut_0(X).$ Then the group 
$F$ is bounded.\end{Proposition}
 \begin{Remark}  Lemma 3.1 of Jin Hong Kim,  \cite{Kim}, states the following.
 
 Let $X$ be a normal compact K\"{a}hler variety. Then there exists a positive
integer $l,$ depending only on X, such that for any finite subgroup $G$ of $\Aut(X)$
acting biholomorphically and meromorphically on $X$ we have
$[G : G \cap \Aut_0(X)] \le l.$
 
 We cannot use straightforwardly  this Lemma,  since it is not clear why every finite subgroup of $\Aut(X)/\Aut_0(X)$ should be isomorphic to  $G/(G\cap  \Aut_0(X))$ for some  finite subgroup  $G$ of $\Aut(X).$\end{Remark}
 \begin{Corollary}\label{vj}  Let $X$ be a compact connected complex K\"{a}hler manifold either non-uniruled or with  Kodaira dimension $\kappa(X)\ge 0.$ Then $\Aut(X)$ is very Jordan. \end{Corollary}
 \begin{proof} In view of \propref{abounded} it is sufficient to prove that $\Aut_0(X)$ is commutative.   But this assertion follows from 
  \cite[Proposition 5.10]{Fu78}  if $X$ is non-uniruled and 
 \cite[Corollary 5.11]{Fu78} if  $\kappa(X)\ge 0$ (see \remarkref{Fujiki}). \end{proof}

 In general,  let $Z$ be a  compact complex  connected  K\"{a}hler    manifold. The 
   analogue of the Chevalley  decomposition for algebraic groups is valid for complex Lie group $\Aut_0(Z):$
\begin{equation}\label{Chevalley}
1\to L(Z)\to \Aut_0(Z)\to \Tor(Z)\to 1\end{equation}
where $L(Z) $ is     bimeromorphically     isomorphic to a linear group, and $\Tor(Z)$ is a complex  torus        (\cite[Theorem 5.5]{Fu78},     \cite [Theorem 3.12]{Lie}, \cite[Theorem 3.28]{CP}).

\begin{Remark}\label{LZ} If $L(Z)$  in \eqqref{Chevalley} is not trivial,   $Z$ contains a rational curve.  Moreover, according  to
\cite[Proposition  5.10]{Fu78},  $Z$ is bimeromorphic to a fiber space  whose general fiber is $\BP^1,$ i.e $X$ is {\it uniruled. }\end{Remark}

\chapter{Rational bundles}
In this chapter,    in \secref{Known}, we want to persuade   the reader that uniruled manifolds (in particular, $\BP^1-$bundles) are of special interest from the Jordan properties point of view. To this end we give a very brief and certainly non-complete overview  of known facts in this field.  In \secref{rationalbundles}
 we provide general properties of maps of manifolds endowed with fibration over a non-uniruled base with the general fiber $\BP^1.$   In  \secref{conics}
we deal with projective non-trivial conic bundles.

\section{ Uniruled vs non-uniruled:  Jordan properties  of groups $\Aut(X), \Bim(X), $ and  $\Bir(X)$.}\label{Known}

In order to demonstrate the  special  role of uniruled   manifolds   from Jordan Properties point of vew,   
we 
present samples of   results on Jordan Properties   of  $\Aut(X)$ and $\Bim(X)$ for various types of  compact  complex manifolds   $X.$

 The group $\Aut(X)$ is  known to be Jordan if \begin{itemize}\item
   $X$ is projective ( 
   \cite {MengZhang});
 \item $X$ is a compact complex   K\"{a}hler   manifold    (\cite{Kim}):
\item  $X$ is   a compact complex space in Fujiki's Class $\mathcal C$ ( 
\cite {MPZ},  also \cite{PS19}  for Moishezon 
 threefolds );
 \end{itemize}

\begin{Remark}\label{Mundet} For the group $\Aut(X)$ `` Jordan'' implies ``strongly Jordan''    because:

{\sl  For every compact complex manifold $X$ there is a constant $C=C(X)$ such that every finite subgroup $G\subset\Aut(X)$ may be generated by at most $C$ elements.}

  The proof of this fact one can find in \cite[Theorem  1.3]{MR13}. It is based on the  same property for elementary abelian $p$-groups that was proved  for much wider class of topological spaces  in
\cite{MS}, and the group-theoretic arguments (that, according to the author,  were explained to him by E.  Khukhro and A. Jaikin). 
 Thus the fact is valid   in much more general situation. 
\end{Remark}

  Moreover, the connected identity component  $\Aut_0(X)$ of $\Aut(X)$ is Jordan for every compact  complex space $X$
  (\cite[Theorems 5  and 7]{Pop18}).
    An example of   $X=E,$  where  $E$ is an elliptic curve,  shows that $\Aut(X)$ may be Jordan but not bounded.
  The classification of complex compact surfaces  with bounded automorphisms group was done in \cite{PS21-1}.

    As follows from  \corref{vj}, the  group  $\Aut(X)$ is very Jordan for any  compact connected complex K\"{a}hler non-uniruled manifold   $X. $ 
 For uniruled     manifolds the situation changes:  if $X=E\times \BP^1$   then $\Aut(X)\cong\PSL(2,\BC)\times\Aut(E)$ is neither bounded nor very Jordan.

  The groups  $\Bir (X)$ and $\Bim (X) $ of birational and bimeromorphic transformations, respectively, are    more complicated. 
  Low-dimensional cases  are well understood. Consider the following

   {\bf LIST}

\begin{enumerate}
\item $E$ - an elliptic curve;
\item
 $A_n$ - an abelian variety  of dimension $n;$
 
 \item  $S_b$ - a bielliptic surface;\item  $S_{K1}$ - a surface of Kodaira dimension 1;\item  $S_{K}$ - a  Kodaira  surface (it  is not a   K\"{a}hler   surface).\end{enumerate}

  Here  are examples of  results for low-dimensional cases.\begin{itemize}
   \item     If $X$ is a complex compact surface  with non-negative Kodaira dimension  then $\Bir(X)$ is bounded unless it appears in the  above
{\bf LIST}
  \cite[Theorem1.1]{PS20}. 
 
 \item If $X$ is a projective surface then  $\Bir(X)$ is Jordan if  $X$ is not birational to a product of an elliptic curve and $\BP^1,$  (\cite{Pop}). (The case of $X=\BP^2$ was done earlier by J.-P. Serre, \cite{Serre1}).
\item If $X$ is  birational to a product of an elliptic curve and $\BP^1$ then  $\Bir(X)$ is   not  Jordan   (\cite{Zar14}).

\item   If $X$ is a projective threefold then $\Bir(X)$ is  not Jordan if and only if  $X$ is birational to a direct product $E\times \BP^2$ or $S\times \BP^1,$ where a surface $S$  appears in the   above {\bf LIST}
 \cite {PS18}.

\item The group $\Bim(X)$ is Jordan for any non-uniruled compact complex connected K\"ahler
manifold  of dimension 3 (\cite{PS21},\cite{Golota}).

\item If $X$ is a non-algebraic  uniruled   compact K\"ahler threefold  with  non-Jordan       group $\Bim(X)$ then $X$ is bimeromorphic to $\BP (\EE) $ for a holomorphic rank 2 vector bundle $\EE$ on a two-dimensional complex torus $S$ with $a(S)=1.$ Moreover, if $a(X)=2$ then  $X\approx S\times\BP^1$(\cite{PS19-2}).

\end{itemize}

The following Theorem  for complex projective varieties    was proved by Yu. Prokhorov and  C. Shramov   (for $\dim (X)>3$, assuming a so called  {\it BAB-conjecture} named after A. Borisov,  L. Borisov and V. Alexeev),  and C. Birkar
(who proved this conjecture),   (\cite [Theorem 1.8]{PS14}, \cite{Bi}).

\begin{Theorem}\label{PS14} 
Let $X$  be a projective variety of dimension $n.$  Then the following  hold.
\begin{itemize}    \item [(i)]  The group  $\Bir(X)$ has bounded finite subgroups provided that  $ X$  is non-uniruled
and has irregularity $q(X) = 0.$
\item[(ii)] The group $\Bir(X) $ is Jordan provided that $X$ is non-uniruled.
\item[(iii)]   The group   $\Bir(X)$ is
Jordan provided that $X$ has irregularity $q(X) = 0.$\end{itemize}
\end{Theorem}

  Here $q(X)=\dim_{\BC}H^1(X,\mathcal{O}_X)$ is the irregularity of $X.$   In particurar, the Cremona group   $Cr_n$  of any rank $n$
  is Jordan (\cite{PS16})).    The exact value  $J_{\Cr_2}=7200$  (E. Yasinsky, \cite{Yasinsky}). The Jordan constant for $\Bir(X)$ for a  rationally connected threefold  $X$ may be 
  found   in \cite{PS17}.

  Let us sketch   the proof  of items  (i) and (ii)   of \thmref{PS14}.

   First, using the MMP(Minimal Model Program) the authors 
 reduce the problem to consideration of the group   $\PAut(X_m),$
 where $X_m$ is a special (relatively minimal) model of $X$ and 
 $\PAut(Z)$ stands for the group of birational selfmaps  of a variety $Z$ that are   isomorphisms in codimension 1.  This means that $f\in \PAut(X_m)$ moves   a divisor to a divisor  and induces an automorphism $f_*=\psi(f)$  of the  finitely generated abelian group
 $\mathrm{NS}^W(X_m)=\mathrm Cl(X_m)/\mathrm {Cl}^0(X_m),$ were  $\mathrm {Cl}(X_m)$ stands for group of Weil divisors on $X_m$ modulo linear equivalence, and $\mathrm {Cl}^0(X_m)$ consists of those ones that are algebraically equivalent to zero.  

   Thus there is a short exact sequence 
\begin{equation}\label{PSex3}
0 \longrightarrow    G_i    \overset{i}\longrightarrow        G  \overset{\psi}\longrightarrow   \Aut(\mathrm{NS}^W(X)), \end{equation}

 where $G_i=\ker (\psi)$  acts on each of  equivalence  classes of $\mathrm {Cl}(X_m).$     Since $\mathrm {NS}^W(X_m)$ is finitely generated abelian group,  $\Aut(\mathrm{NS}(X))$ is bounded. 

 Take a very ample divisor $L$ and denote by $\mathrm Cl_L(X_m)$  the equivalence class containing $L. $  It is an abelian variety of dimension $q(X_m)=q(X).$

   Let $G_L$ be  the kernel of the action   of $ G_i $   on $\mathrm {Cl}_L(X_m).$   
     Then there is a short exact sequence 
 \begin{equation}\label{PSex4}
0 \longrightarrow        {G_L}  \longrightarrow        G_i \longrightarrow        G_{ab}\end{equation} 
 
 where $G_{ab} \subset \Aut(\mathrm {Cl}_L(X_m))$ is a subgroup of automorphisms  (as a variety, but not as a group)  of abelian variety  $\mathrm {Cl}_L(X_m).$     The group   $\Aut(\mathrm Cl_L(X_m))$ 
 is strongly Jordan. 
 Let $V$ be  a linear space of sections of $L$   and $ \BP(V)$ its projectivization.  Let $F_L$ be the subgroup of those 
linear transformations  of  the projective space $\BP(V)$   that preserve $X_m\subset \BP(V).$  Since $F_L$  is a linear group and    $X$ (and $X_m$) are {\bf non-uniruled},  $F_L$ has to be finite   (see \remarkref{LZ}). 
  Thus $G_L\subset F_L$ is finite.
 
  Therefore\begin{itemize} \item If $q(X)=0,$  then $G_{ab}$ is trivial and $\Bir(X_m)$ is bounded (see \remarkref{exactsequences},(1)).
\item If $q(x)>0 $ then  $G_i$ is Jordan   (see \remarkref{exactsequences},(4))  and $\Bir(X)_m$ is Jordan     (see \remarkref{exactsequences},(3)). 
 \end{itemize}

\begin{Remark}
\label{diff}  \ \ \  
\newline  {\bf 1.} One can ask similar  questions about the  group $\Diff (M)$ of all diffeomorphisms  of a smooth manifold $M.$
 There was the  Conjecture of E. Ghys  (1997):

{\it If $M$ is a  compact smooth manifold, then $\Diff (M)$ is Jordan.}

It was answered negatively by B. Csik\'{o}s, L. Pyber, E.Szab\'{o}  in \cite{CPS},
whose approach was based on an algebraic geometry construction from \cite{Zar14}
(see also Chapter \ref{directproducts} below).

In works of  J. Winkelmann \cite{W}  and  V. Popov    \cite{Pop15}    it was proven that  there is a connected 
 non-compact  Riemann surface $M$
 such that 
$\Aut(M)$ contains 
an isomorphic copy of every finitely presented (in particular, every finite) group $G$.
In particular, $\Diff(M)$ is not Jordan.
 B. Zimmerman \cite{Zim} proved that if $M$ is compact and $\dim(M)\le 3$  then  $\Diff(M)$  is  Jordan. 
 The Jordan properties of $\Diff(M)$ were deeply studied by  I. Mundet i Riera  (\cite {MR10}, \cite {MR15}, \cite {MR16}, \cite {MR17},
\cite {MR17b}, \cite {MR18}).  It was proven there, in particular, that $\Diff(M)$  is Jordan if $M$ is one of the following:

(1) open acyclic manifolds, 

(2) compact manifolds (possibly with boundary) with nonzero
Euler characteristic, 

(3) homology spheres.

{\bf 2.} The question on Jordan properties for algebraic groups over various fields was considered in \cite{Pop18},\cite {MengZhang}, and \cite {ShVb}
(see also \cite{BZ17}).

{\bf 3.}  Jordan properties  of $\Aut (X)$ and $\Bir (X)$ for  open subsets of certain projective   $\BP^1-$bundles   were considered in \cite{BZ15}, \cite{BZ18}.

{\bf 4.}  In the case of algebraic varieties $X$ over algebraically closed fields  of prime  characteristic  $p$ one should not expect the Jordan properties to hold (see Example \ref{jordanproperties5}). However, there are  analogues of several important results over $\BC$ that deal instead with $p$-Jordan properties (see  \remarkref{pJordan}) of $\Aut (X)$ and $\Bir (X)$ (\cite{Hu}, \cite{CS},  \cite{Kuz}).
On the other hand, it is known that the Cremona group 
of rank $2$
over a {\sl finite} field is Jordan \cite{PS21-c}.

\end{Remark}

     For compact complex manifolds,  roughly speaking, from Jordan properties point of view the uniruled varieties   are the worst and may be divided in several categories.

  First,    manifolds   $X$ that are rationally connected (or with $q(X)=0 $).   For projective varieties, thanks to \thmref{PS14},  $\Bir(X)$ is Jordan.

Second,   manifolds that are fibered over a non-uniruled base  $Y$   with rationally connected fibers, with $q(X)\ne 0,$   that 
are not  bimeromorphic (birational) to a direct product $Y\times \BP^N.$     In many special cases $\Bim(X)$ (or $\Bir(X)$) is Jordan. 
Moreover, $\Aut(X)$ appears often  to be very Jordan.  
  We discuss some of these special cases in Chapter{  \ref{base}.

Third,  $X$ is isomorphic (bimeromorphic) to the direct product $Y\times \BP^N.$ If $Y$ is a torus, and  $a(Y)>0$ then $\Bir(Y)$ is not Jordan. This case is subject of Chapter   \ref{directproducts}.

 \section{ Rational bundles}\label{rationalbundles}
 
 In this section we provide some  useful  about  $\BP^1-$ bundles and their morphisms.   We start with slightly more general construction.

\begin{Definition}\label{rationalbundles1}
 We say that  a triple  $(X,p,Y)$   is a  {\it  rational bundle} over $Y$ if \begin{itemize}
 \item  $X,Y$ are compact connected   complex     manifolds  endowed with a holomorphic surjective map   $p:X\to Y;$

    \item  for a general $y\in Y$  the fiber $p^*(y)$ is reduced and isomorphic to $\BP^1$  (where {\sl general} means outside a proper  analytic subset of 
    $Y$, see {\bf Notation and Assumptions} (20)).

 \item  If $\dim  ( P_y)=1, $  where $P_y:=p^{-1}(y),$   for every $y\in Y$  we call $(X,p,Y)$   an {\it equidimensional rational bundle}  over $Y.$  \end{itemize}
\end{Definition}

 If for an open subset $U\subset Y$  and for every   $y\in U$     the fiber $P_y\sim \BP^1$  then, by a theorem of W. Fischer and  H. Grauert  (\cite{FG}),  $p^{-1}(U)\subset X$ is a holomorphically locally trivial fiber bundle over $U.$   If $U=X$ then triple  $(X,p,Y)$  is   a   $\BP^1$-bundle over $Y.$

  If $(X,p,Y)$ is a
    rational bundle 
over a non-uniruled K\"{a}hler
manifold $Y$  then $p:X\to Y$ is, by definition,  a maximal rational connected (MRC) fibration  of $X $ 
 (see \cite[Theorem 2.3, Remark 2.8]{C92}  and  
\cite [IV.5]{Kollar}  for the definition and discussion). 

 Bimeromorphic self-maps preserve the MRC-fibration.  This is a well-known fact, but we have not found a suitable  reference for  the proof of this fact in complex analytic case. We provide it here.  In case when   the Kodaira dimension $\kappa(Y)\ge 0$, the desired result follows from \cite[Theorem   1.1.5]{Mobuchi}.  For automorphisms the detailed exposition may be found in \cite[Section 2.4]{Akhiezer}.

\begin{Lemma}\label{mrc} Let $X, Y,Z $ be three complex compact connected manifolds,  $p:X\to Y$ and $q:X\to Z$ be  surjective holomorphic  maps. Assume that 
\begin{itemize}
\item $Z$ is      non-uniruled ; 
 \item
 there is an analytical  Zariski  open dense subset $U\subset Y, $    such that $P_u=p^{-1}(u)\sim\BP^1$  for every $u\in U .$\end{itemize}  Then there is a meromorphic map $\tau:Y\dasharrow Z$ such that $\tau\circ p=q,$  i.e., the following diagram commutes:\begin{equation}\label{l11} 
\begin{aligned}
&  &                        & X                         &  &        &  \notag \\
&   &  \swarrow_p     &                           &    \searrow^q & &\notag \\ 
&  Y  &    & \overset {\tau}{\dasharrow}      &      &  Z              &  \end{aligned},  \end{equation}
\end{Lemma}
\begin{proof}

Let $\Phi:X\to Y\times Z$ be defined by $\Phi(x)=(p(x),q(x)).$ 
 The image $T=\Phi(X)$ is an irreducible compact analytic  subspace of  $Y\times Z$  (see, e.g. \cite[ Theorem 2,   Chapter VII]{Narasimhan}).  
We denote by $pr_Y$ and $pr_Z$ the natural projections of $T$ on the first and second factor, respectively. Both projections are evidently surjective.
The set $$  T_1
=\{(y,z)\in T \ \  | \dim (\Phi^{-1}(y,z))>0\}$$  
is an   analytic subset  of $ T\subset Y\times  Z$
 (\cite{Re}, \cite[Theorem 3.6, p.137]{Fi}).
Its projections $T_Y=pr_Y(T_1)\subset Y$, and $T_Z=pr_Z(T_1)\subset Z$   to  the first and the second factor   are  analytic subsets of $Y$  and $Z,$  respectively,
(\cite{Re}, \cite [Theorem 2, Chapter  VII]{Narasimhan}).

  If $T_Y\ne Y$ then  $V:=(Y\setminus T_Y)\cap U$ is an analytical  Zariski open dense subset  of $Y.$   For each   $y\in V$  we have $p^{-1}(y)\sim\BP^1$ and $\dim (q(p^{-1}(y)))>0.$  Thus 
 the pair   $p: X\to Y, \ q:X\to Z$  would provide a covering family for $Z,$  which is impossible, since $Z$ is  not  uniruled.   Thus $T_Y=Y.$

   Take  $u\in U.$ Since  $T_Y=Y$  there is  $z\in Z$ such that $(u,z)\in  T$    and   $\dim   (\Phi^{-1}(u,z))\ge 1. $ 
       Moreover, $$\Phi^{-1}(u,z)=
   \{x \ \mid \  p(x)=u, q(x)=z\}
      \subset P_u\subset X.$$ Since $ P_u\sim\BP^1$ and $\dim   (\Phi^{-1}(u,z))\ge 1,$  we have $ P_u=\Phi^{-1}(u,z).$ Hence, $q\mid_{P_u}=z$ for every $u\in U$ and some $z\in Z$   and there is only one $z\in Z$ such that $(u,z)\in  T.$  
  Thus, \begin{enumerate}\item $T$ is an irreducible connected subset of $Y\times Z;$
  \item $\dim (T)=\dim (Y);$
  \item  for every $u\in U$ there is only one $z\in Z$ such that $(u,z)\in  T.$  
\end{enumerate} It  follows that $T$ is the  graph of a meromorphic map  that we denote as $\tau.$ 
 \end{proof}  
 
\begin{Remark}\label{Graf}  The fact that $q$ contracts every fiber of $p$  over an analytical  Zariski open non-empty subset of $Y$  is proven in    \cite [Proposition 6.2]{Graf}.   \end{Remark}

\begin{Lemma}\label{tau} 
 Let  $(X,p_X,Y)$ and $(W,p_W,Y)$  be    two  rational  bundles  over a non-uniruled (compact connected) manifold $Y.$
 Let 
 $ f:X\to W$ be a surjective meromorphic map. 

Then there exists a meromorphic  map $\tau(f):Y\to Y$  that may be included into the following commutative diagram.

\begin{equation}\label{diagram1001}
\begin{CD}
X @>{f}>>W\\
@V p_X VV @Vp_W VV \\
Y @>{\tau(f)}>> Y
\end{CD}.
\end{equation}
In addition, if $f$ is holomorphic, so is $\tau(f)$.
 \end{Lemma}

\begin{proof}   Let $a: \tilde X \to X$ be such a modification of $X$ that the following diagram is commutative 
\begin{equation}\label{l44}
\begin{aligned}
&  &                        & \tilde X                         &  &        &  \notag \\
&   &  \swarrow_{a}     &                           &    \searrow^b & &\notag \\ 
&  X  &    & \overset {f}{\dasharrow}      &      &  W             &  \end{aligned},  \end{equation}
where 
$b: \tilde X \to W$ is a holomorphic map (it always exists, \cite [Theorem 1.9]{Pe}).

 Consider  the holomorphic maps $\tilde p_X:=p_X\circ a:\tilde X\to Y$ and 
 $\tilde f:=p_W\circ b:\tilde X\to Y.$ We apply \lemref{mrc} to $\tilde X, Y=Z$ and  $\tilde p_X:\tilde X\to Y,$  $\tilde f:=\tilde X\to Y,$ and obtain the needed map $\tau(f)\in\Bim(Y)$ that may be included into the following commutative diagram
\begin{equation}\label{l5} 
 \begin{aligned}
&  &                        & \tilde X                         &  &        &  \notag \\
&   &  \swarrow_{a}     &                           &    \searrow^b & &\notag \\ 
&  X  &    & \overset {f}{\rightarrow}      &      &  W              & \notag \\ 
&\downarrow_{pr_X}&&&&\downarrow_{pr_W}\notag \\ 
&  Y  & &\overset {\tau(f)}{\rightarrow}      &      &  Y              &
 \end{aligned}.  \end{equation}
If $f$ is holomorphic, one may take $\tilde X=X$  and $U=Y$  (in the notation of \lemref{mrc}). Thus, 
$\tau(f)$ will be defined at every point of $Y.$
\end{proof}

\begin{Corollary}\label{tauh} For a  rational   bundle $(X,p,Y)$ over a non-uniruled (complex connected compact) manifold $Y$ there are  natural  group homomorphisms  $\tau:\Aut (X)\to \Aut(Y)$ and $\tilde {\tau}:\Bim (X)\to \Bim(Y)$ such that $$p\circ f=\tau(f)\circ p, \ p\circ f=\tilde \tau(f)\circ p$$ for every $f\in\Aut (X)$  or 
$f\in\Bim (X),$    respectively.\end{Corollary}

\begin{Remark}\label{tau-meromorphic}
 If $Y$ is K\"ahler  non-uniruled, then  
the restriction group homomorphism $$\tau \bigm |_{\Aut_0(X)}:\Aut_0(X)\to\tau(\Aut_0(X))$$  
is 
a holomorphic homomorphism of complex Lie groups and  $\tau(\Aut_0(X))$ is a closed complex Lie subgroup of $\Aut(Y)$  (A. Fujiki,  
 \cite[Lemma 2.4, 3, Theorem 5.5 and Lemma 4.6]{Fu78}).
\end{Remark}

  Further on we will use heavily the following classical theorems.
  
\begin{Theorem}\label{Remmert}[  Remmert-Stein Theorem](   see, e.g., \cite [Theorem of Remmert -Stein, ChapterVII]{Narasimhan}) 
  Let $X$ be a complex space and  $Y$ an  analytic subset of $X,$  $A$ an analytic subset of $X\setminus Y.$ Suppose that there is an integer $p>0$ such that $\dim  (Y)\le p-1,$ while  $\dim_a (A)\ge p$ for any $a\in A.$  ($\dim  (Y)\le -1$ means that $Y=\emptyset.$)  Then the closure $\ov A$ of $A$ in $X$ is an analytic set in $X.$
\end{Theorem}
  
\begin{Theorem}\label{Riemann}[Second  Riemann   removable singularity  theorem]  (\cite[ Chapter 2, Appendix]{Fi})  Assume that $X$ is a complex manifold and $A\subset X$    is an analytic subset such that $$\codim  _x (A)\ge 2 \text{ for every}   \ x\in X.$$  Then any holomorphic function $f:X\setminus A\to \BC$ has a unique holomorphic extension  $\tilde f:X\to\BC.$
\end{Theorem}
 
 \begin{Theorem}\label{Levi}[Levi   continuation theorem]    (\cite{Levi}, see also \cite[Chapter VII, Theorem 4]{Narasimhan} or \cite [Section 4.8]{Fi}) 
  Let $X$ be a normal complex space and $Y$ an analytic subset of $X$ such that for any $a\in X$ we have $\dim_a  (Y)\le \dim_a (X)-2.$ Then any meromorphic function on $X\setminus Y$ has an extension to a meromorphic function on $X.$
\end{Theorem}

 \begin{Remark}\label{affine} 
   It follows from the second Riemann Theorem that  a  holomorphic map  from  $f:X\setminus \Sigma  \to Z$ where $X$ is a complex manifold, $\Sigma$ an analytic  subset of codimension at least 2, and $Z\subset\BC^N$ an affine   complex set, 
  may be extended to a holomorphic map $\tilde f: X\to Z.$

   Indeed,  let   $z_1,\dots,z_N$  be coordinates in $\BC^N.$  The map $f$ consists of  $N$ holomorphic functions $z_i(x), i=1,\dots, N$  defined on 
$X\setminus \Sigma.$   By \thmref{Riemann} the functions $z_i$ may be extended to holomorphic functions $\tilde z_i$  defined on $X. $  
Since $Z$ is a closed subset of $\BC^N, $   we have $\tilde f(x)=(\tilde z_1(y),\dots,\tilde z_N(x))\in Z$  for every $x\in X.$

 This fact is a particular case of the  Extension Theorem of A. Andreotti and W. Stoll  (\cite{AS69}. Recall the a subset $ M\subset  X $ of a complex   space  $X$ is {\sl thin} if  in a neighborhood of every point  $m\in M$  it is contained in an analytic subset of codimension 1. 
\begin{Theorem}\label{AS}[Andreotti-Stoll Theorem] Let $\tau:A\to Y$ be a holomorphic map of the open subset A of a normal complex space $X$ into a Stein space $Y.$  let $M:=X\setminus A$ be a thin set. If $M$ has topological codimension  $\ge 3, $  then $\tau$ may be extended to a holomorphic map   of $X$ into $Y.$\end{Theorem}

\end{Remark}

We use this fact to prove the next 

 \begin{Lemma}\label{3.8}  Let $(X,p,Y)$ and $(Z,q,Y)$  be two $\BP^1-$bundles over a connected complex manifold $Y.$ Let $\Sigma\subset Y$ be an analytic subset of codimension  at least 2, $U=Y\setminus\Sigma,  \ V_X=p^{-1}(U),  V_Z=q^{-1}(U).$   Let 
 $f:X\to Y$ be a meromorphic map such that $q\circ f=p$  and   induced map
  $f:V_X\to  V_Z$ is  an isomorphism. 
Then $ f :X\to Z$  is a biholomorphic  isomorphism. \end{Lemma}

\begin{proof}  By construction, for every $u\in U$ the map $f$ induces an isomorphism $f\mid_{P_y}:P_y\to Q_y,$ where   $P_y=p^{-1}(y), Q_y=q^{-1}(y).$
Consider a point $s\in \Sigma $ and its open neighborhood  $U_s$ such that there are isomorphisms $\psi_X:p^{-1}(U_s)\to U_s\times\BP^1,$ 
$\psi_Z:q^{-1}(U_s)\to U_s\times\BP^1 $ compatible with projection maps $p$ and $q, $   respectively. Then for every $y\in U_s\cap U$   we have an element of $\PSL(2,\BC)$  representing $f\mid_{P_y}:P_y\to Q_y,$
which is an automorphism  of $\BP^1.$  Thus we have a holomorphic map $ U_s\cap U\to \PSL(2,\BC).$ Since the last one is an affine set,  the map  extends to a holomorphic map $U_s\to\PSL(2,\BC). $   Hence, we have an  extension of $f$ 
to an  isomorphism  $\tilde f_s: p^{-1}(U_s)\to  q^{-1}(U_s)$ that coincides with $f$ in $V_X\cap p^{-1}(U_s),$  hence everywhere.  
\end{proof}

\begin{Lemma}\label{3.10} Let $(X,p,Y)$ and $(Z,q,Y)$  be two $\BP^1-$bundles over a compact connected complex manifold $Y$ with $\dim (Y)=n.$ Let $\Sigma\subset Y$ be an analytic subset of codimension  at least 2, $U=Y\setminus\Sigma,  \ V_X=p^{-1}(U),  V_Z=q^{-1}(U).$   Let   $f:V_X\to  V_Z$ is  a  meromorphic  map such that $q\circ f=p.$   Then there exist a meromorphic map $\tilde f:X\to Y$ such that   $\tilde f\mid_U=f$ and $q\circ \tilde f=p.$ \end{Lemma}

  For  K\"ahler   manifold $Y$ this Lemma follows from the following general   Theorem of Y.-T. Siu (\cite{Siu}).

\begin{Theorem}\label{siu}  [Siu extension Theorem]   Let   $X$ be a complex manifold, $A$ be a subvariety of codimension $\ge  1$  in $X,$ and 
$G$ be an open subset of $X$ which intersect every branch of $A$ of codimension 1. If $M$ is a  compact K\"ahler manifold, then every meromorphic map $f$  from $(X-A)\cup G$  may be extended to   a meromorphic map from $X$ to $M.$\end{Theorem}

At this stage we do not require that $Y$ (and, a fortiori, $Z$) is K\"ahler, but we use the fact that $X, \ Z $  are $\BP^1-$bundles.

\begin{proof}(Proof of \lemref{3.10}).  Consider  a fiber product 
$$  W=X\times_{Y}Z=\{(x,z)\in X\times Z \ | \ p(x)=q(z)\}\subset X\times Z$$
and its subsets: $$\Gamma_f=\{(x,z)\in V_X\times V_Z \ |\ p(x)=q(z), \ z\in f(x)\}\subset W,$$
$$\tilde \Sigma=\{(x,z)\in X\times Z \ | \ p(x)=q(z)\in\Sigma\}\subset W.$$
 By construction  $\dim(\tilde \Sigma)\le n,$
 $\dim(\Gamma_f)=\dim (X)=n+1.$ Thus, according to the   Remmert-Stein  Theorem (\thmref{Remmert}) 
 the closure of $\ov\Gamma_f$ of $\Gamma_f$  in $W$ is an analytic subset in $W.$   Let $U_1\subset U$ be an open subset such that $f$ is defined at every point of $V_1:=p^{-1}(U_1).$ We have 
 \begin{itemize}\item  $\ov\Gamma_f$  is an irreducible  (since $\Gamma_f,$ being the graph of a meromorphic map is irreducible)   analytic subset of $X\times Z;$
\item $  \dim (\ov\Gamma_f)=\dim X;$
\item for every $v\in V_1$ there is unique $z\in Z$ such  that $(v,z)\in \ov\Gamma_f.$
\item the natural projection $\tau:\ov\Gamma_f\to X$ is proper, since both sets are compact.\end{itemize}
It follows that $\ov\Gamma_f$ is a graph of a meromomorphic map $\tilde f:X\to Z$  (see  \cite[page 75]{AS71}).\end{proof}

  We will use also the following

\begin{Lemma} \label {3.9}  Assume that  $Y$ is   a compact connected complex manifold,   $\Sigma\subset Y$ is an analytic subset of codimension  at least 2, $U=Y\setminus\Sigma.$  Let $(\LL,\pi, Y)$ be a holomorphic line bundle over $Y$  such that $\LL |_U$ is trivial. Then $\LL$ is trivial.\end{Lemma}

\begin{proof}  Indeed,  $V:=\pi^{-1}(U)\sim U\times \BC_z, $   thus $z=F(v)$ is a holomorphic function on $V.$   The set $\tilde \Sigma:=\pi^{-1}( \Sigma)$ has codimension 
at least two in $\LL.$  
 By  the Second  Riemann   removable singularity  theorem   (\thmref{Riemann}),  $F$ may be extended to a holomorphic function $\ov F$ on $\LL.$
   Thus we have a holomorphic map $\Phi:\LL\to Y\times \BC_z,  \ \   x\in \LL\to (p(x), \ov F(x)),$ that is an isomorphism outside $\tilde \Sigma.$    Let $S$  is 
the set of all points in $\LL$
where the differential $d\Phi$  of $\Phi$ does not have the maximal rank.  The sets  $S$ and $\tilde S=p(S)$ are analytic subsets of $\LL$
and  $ Y,$ respectively  (see, for instance,
\cite [Theorem 2, Chapter  VII]{Narasimhan},
 \cite[Therem  1.22]{PR}, \cite{Re}).  Moreover, $\codim(\tilde S)=1 $ (\cite{Ra}).   But $\tilde S\subset \Sigma,$  hence $\tilde S=\emptyset. $
It follows that $\Phi $ is an isomorphism.\end{proof}

\section {Non-trivial rational bundles}\label{conics} 

In this section we consider  non-trivial $\BP^1-$ bundles over a non-uniruled base.  It appears that the fact that $X\not\approx Y\times \BP^1$   imposes  the significant restrictions on the structure of the groups $\Aut(X)$ and $\Bim(X).$
We will start with projective case. 
\begin {Definition} \label{conicbundle}
       A regular surjective map $f: X\to Y$ of  smooth irreducible  projective complex  varieties is a {\it conic bundle} over $Y$ 
 if there is a Zariski-open  dense   subset $U\subset Y$ such  that   the fiber $f^{-1}(y)\sim\BP^1$ for all $y\in U$. \end{Definition}

 The { \it generic fiber} of $f$ is an irreducible  smooth projective curve $\mathcal{X}_f$ over the field $K:=\BC(Y)$  such that its field of rational functions $K(\mathcal{X}_f)$ coincides with $\BC(X).$
(The genus of $\mathcal{X}_f$ is $0$.) 
\begin{Theorem} \label{conicbundlethm} (\cite {BZ17})
Let $X$ be a conic bundle over a non-uniruled  smooth irreducible projective variety $Y$ with $\dim(Y)\ge 2$.  If $X$ is not birational to $Y\times \BP^1$ then $\Bir(X)$ is strongly Jordan.
\end{Theorem}

Let us sketch the  proof  of \thmref{conicbundlethm}.

 Let   $f: X\to Y$ be a conic bundle and assume that    $Y$ is  non-uniruled.  According to \corref{tauh}.
 every $\phi\in  \Bir(X)$  is fiberwise:  there is a homomorphism $\tilde\tau: \Bir(X)\to\Bir(Y)$ such that $\tilde\tau(\phi)\circ f=f\circ\phi:$ 

$$\begin{aligned}
&  X     & {} &\overset{\phi}\longrightarrow       & {} &     X  & \notag \\
&f\downarrow  &  {}    & {}                     & {} & \downarrow  f&\notag\\
 &  Y &{}      &\overset {\tilde\tau(\phi)} \longrightarrow   & {} & Y      &\notag\end{aligned}
.$$

 It follows that there is 
an  exact sequence of groups: 
\begin{equation}\label{ex100}
0 \to \Bir_{\mathbb  C(Y)}(\mathcal{X}_f) \to \Bir(X) \to \Bir(Y);\end{equation}

  Since $Y$ is non-uniruled the group $\Bir(Y)$ is strongly Jordan thanks to \thmref{PS14}  
(see also \cite[Cor. 3.8 and its proof]{BZ17}).

 Let us compute { $\Bir_{K}(\mathcal{X}_f)$}.  We have \begin{itemize}
\item[1.] $\Bir_{K}( \mathcal{X}_f)=\Aut( \mathcal{X}_f)$ since 
 $\dim(\mathcal{X}_f)=1.$

\item[2.]      Since $X\not\approx Y\times\BP^1$  
the genus $0$ curve $\mathcal{X}_f$ has {\bf no}
 $K$-points    and therefore 
there exists a ternary quadratic form
$$q(T)=a_1 T_1^2 +a_2 T_2^2 +a_3 T_3^2$$
over $K$ such that 

--- all $a_i$ are nonzero elements of $K;$ 

--- $q(T)=0$ if and only if  $ T=(0,0,0))$ (this means  that $q$ is  {\sl anisotropic}); 

--- $ \mathcal{X}_f$  is biregular  over $K$ to the plane projective quadric
$$\mathbf{X}_q:=\{(T_1:T_2:T_3)\mid q(T)=0\}\subset \BP^2_K.$$

\item[3.]       $K$ is a field of characteristic zero  that contains all roots of unity.

\end{itemize}

  Now we consider a quadric, i.e., a hypersurface in a    projective space  defined by one irreducible quadratic equation
over  $K.$ 
It is anisotropic  if it has no point defined over $K.$   In   \cite{BZ17}   proven was  the following 

\begin{Theorem}\label{quadrics} (\cite {BZ17}) 
Suppose that $K$ is a field of characteristic zero that contains all roots of unity,  $d \ge 3$ an odd integer,  $V$ a $d$-dimensional $K$-vector space and let $q: V \to K$ be a  quadratic form such that $q(v) \ne 0$ for all nonzero $v \in V$.
Let us consider  the projective quadric $X_{q}\subset \mathbb{P}(V)$  defined by the equation $q=0$, which is a smooth projective irreducible  $(d-2)$-dimensional variety over $K$.
Let $\Aut(X_{q})$ be the group of biregular automorphisms of $X_{q}$.
Let $G$ be a finite subgroup in $\Aut(X_{q})$. Then $G$ is commutative, all its non-identity elements have order $2$ and the order of $G$ divides $2^{d-1}$.\end{Theorem}

 Thus 
 if
  $G$ is a nontrivial finite subgroup of $ \Aut( \mathcal{X}_f)$ 
 then either $G\cong \BZ/2\BZ$ or  $G\cong (\BZ/2\BZ)^2$.

 Now applying \remarkref{exactsequences}(4)  we get from 
 \eqqref{ex100}  that $\Bir(X)$ is Jordan.

\begin{Remark}\label{pointless} Actually  in \thmref{conicbundlethm}  the
 variety $X$ is considered as a pointless  ($X(K)=\emptyset$)  rational  curve defined over a field $K,$    where field  $K$ contains all roots of unity.
 The ``pointless surfaces''  were studied by C. Shramov and V. Vologodsky in \cite{ShV}, \cite{ShVb}.\end{Remark}

For complex compact manifolds  the absence of a point in generic fiber has to be reformulated in  terms of sections.

  Let $(X,p,Y)$  be  a  rational bundle  over  a compact complex   connected non-uniruled   manifold $Y$ 
 (see  \defnref{rationalbundles1}),  i.e.,   \begin{itemize}
\item  $X, Y $ are compact connected manifolds;
\item $Y$ is non-uniruled;
\item  $p:X\to Y$  is a surjective holomorphic map; 
\item  $p^{-1}(U)$ is a   holomorphic     locally trivial fiber bundle over  a dense analytical Zariski open subset $U\subset Y$ with  fiber  $\BP^1$  and 
with the  corresponding projection map $p:p^{-1}(U)\to U.$ \end{itemize}  

According to \lemref{tau},   every map $f\in\Bim(X) $ maps the general fiber of $p$ to a fiber of $p.$  Let $$\Aut (X)_p=\{f\in\Aut(X) \ | \ \tau(f)=id\}, \  
\Bim (X)_p=\{f\in\Bim(X) \ | \ \tilde \tau(f)=id\},$$
be the kernels of $\tau$ and $ \tilde \tau,$ respectively.

 Then we have the following short exact sequences 
\begin{equation}\label{exseq1}
0\to\Aut(X)_p\to\Aut(X)\overset{\tau}{\to} \Aut(Y),\end {equation}
\begin{equation}\label{exseq2}
0\to\Bim(X)_p\to\Bim(X)\overset{\tilde \tau}{\to} \Bim(Y).
\end {equation}

\begin{Definition}\label{almost}  Let $(X,p,Y)$  be  an equidimensional rational  bundle  over  a compact complex   connected non-uniruled   manifold $Y.$  We will call an irreducible analytic subspace   $D$ of $X$ {\it almost section} if the intersection number $(D, F)$   of $D$ with a fiber   $F=p^{-1}(y), \ y\in Y$  is 1.\end{Definition}

\begin{Remark}\label{imagesection}
For $f\in \Bim(X)_p$ let $\tilde S_f$ be the  indeterminacy locus of $f$  that is an analytic subspace of $X$ of codimension at least 2 (\cite[page 369]{Re}). 
  Let   $S_f={p(\tilde S_f)},$ which is an analytic subset of $Y$ (\cite{Re}, \cite [Theorem 2, Chapter  VII]{Narasimhan}).  Since the dimension of a fiber of $p$ is one,   $Y\setminus S_f$ is an analytical   Zariski  open dence subset  $U$ of $Y.$  Hence the restriction $f  \mid_{P_y}$  of $f$ onto the  fiber   $P_y=p^{-1}(y)$ of  $p$
over a general point $y\in Y$ belongs to $\Aut(P_y).$ Thus $f$ induces an automorphism    of $V=p^{-1}(U)$ onto itself.

 Let $D$ be an almost section of $X$.  \begin{itemize}

\item[(1)]

 Let $a: \tilde X \to X$ be such a modification of $X$ that the following diagram is commutative 
\begin{equation}\label{l4}
\begin{aligned}
&  &                        & \tilde X                         &  &        &  \notag \\
&   &  \swarrow_{a}     &                           &    \searrow^b & &\notag \\ 
&  X  &    & \overset {f}{\dasharrow}      &      &  X             &  \end{aligned},  \end{equation}
where 
$b: \tilde X \to X$ is a holomorphic map (it always exists, \cite [Theorem 1.9]{Pe}).
 Then $f(D)=ba^{-1}(D)$ is an analytic subset   (\cite{Re},\cite[Theorem 3.6]{Fi})   that is a union of finite number of irreducible components $D_1,\dots, D_n.$

\item[(2)]
 We may assume   (maybe after  shrinking  $U$) that  $D$ meets every fiber $P_y, y\in U$ at precisely one point. Thus $f(D)$ meets $P_y, y\in U$ at precisely one point as well.

\item[(3)]   It follows from (2) that precisely one of  irreducible components  of $f(D),$ say, $D_1,$  meets  a  fiber $P_y, y\in U.$
The intersection $D_1\cap P_y, y\in U$  consists of precisely one point.\end{itemize}
 
Thus $D_1$ is an almost section.  It follows that the image of an almost section under $f\in\Bim(X)_p$ contains precisely one almost section.
 In particular, $f$ cannot contract an almost section.\end{Remark}

Similarly, if  $\Phi:X\to Z$   is  a bimeromorphic map of a $\BP^1-$bundle $(X,p,Y)$ to  a $\BP^1-$bundle $(Z,q,Y)$ such that $q\circ\Phi=p,$  then the image of an almost section contains an almost section.

  The following results  were proved by Yu.  Prokhorov and C. Shramov in more general setting, we formulate below its application for the case of  $\BP^1-$ bundles. 
\begin{Theorem}\label{PS100}  Let $(X,p,Y)$  be  a  $ \BP^1$-bundle  over  a compact complex   connected non-uniruled   manifold $Y.$ Let   $P_y=p^{-1}(y)$   be a   fiber of  p
over a general point $y\in Y.$ 

Then
\begin{itemize}\item[1.]   Every countable union of finite subgroups of $\Bim(X)_p$ may be embedded into $\Bim(P_y)$
(\cite [Lemma 4.1]{PS19-2}).
\item[2.]   If $X$ is K\"ahler, then   $\Bim(X)_p$ is Jordan ((\cite [Corollary  4.3]{{PS19-2}}
\item [3.] 
 If there exists an  almost section $D$ on $X$ then   $X\sim \BP(\EE)$
for some rank two holomorphic vector bundle $\EE$ on $Y.$   \cite[Lemma 3.5]{kostya}.
\item [4.]  Assume that no almost section exists on $X.$ 
Assume that $\Bim(Y)$ is  strongly Jordan.  Then $\Bim(X)$ is Jordan
\cite[ Corollary 5.8]{kostya}.
\item[5.]  If there exist $f\in\Bim(X)_p$ of finite order $d>2$ then there exist at least two   distinct almost sections on $X.$
If $f$ is biholomorphic, the almost sections may be chosen to be disjoint. 
\cite[Lemma 4.1]{kostya}
\end{itemize}\end{Theorem}

Let us add to this the following 
\begin{Lemma}\label{almost1}  In the Notation of \thmref{PS100}, assume that   there exists precisely one   almost section on $X.$  Then
 if $ \Bim(Y)$
is Jordan, so is $\Bim(X).$
\end{Lemma}

\begin{proof}  
Assume that  $D$ is  the only almost section.  Let $f\in\Bim(X)_p, \  f\ne id.$  The set $f(D)$ contains an irreducible component $D_1$ that is 
an almost section   (see  \remarkref{imagesection}). 
Therefore $D=D_1$ and $D$ is contained in the set $\mathrm{Fix}(f)$  of fixed points of $f. $  Let $V\subset Y$ be an analytical Zariski open   dense  subset such that the   restriction   $f_v$ of $f$ onto  the fiber $P_v$ is a  non-identical automorphism of $P_v$
for all $v\in V$.
  Since $f_v$ has at most two fixed points, we have:

---either $\Fx(f)\cap P_v=D\cap P_v$ contains one point, and $f_v$ has infinite order; 

---or $(\mathrm{\Fx}(f)\cap P_v)\setminus(D\cap P_v)$ contains a  point for the general $v\in V$   and $\mathrm{Fix}(f)$ contains an almost section distinct from $D,$  which is impossible. 

Thus every element $f\in\Bim(X)_p$ different from $id$ has infinite order.   Therefore  $G\cap\Bim(X)_p=\{id\}$  for every finite group $G\subset \Bim(X)$  and $\tilde \tau: G\to \Bim(Y) $ is a group  embedding. Hence, the Jordan index $J_{\Bim(X)}\le J_{\Bim(Y)}.$\end{proof}

The opposite case, when the $\BP^1-$bundle has many almost sections, is  when  $X\cong Y\times\BP^1.$ It will be considered in the next chapter.
 
\chapter{$\BP^1-$bundles over complex  tori}\label{directproducts}

In this section we deal with  $\BP^1-$bundles of a  special type, namely $(\ov \LL , p, T),$  where $\LL$ is a holomorphic line bundle over a complex torus  $T$ and     
   $      \ov \LL=\BP(\LL\oplus\mathbf 1_T). $
 Most examples of compact complex connected manifolds with a  non-Jordan group $\Bim(X)$  (at least for dimensions greater than 3)  are $\BP^1-$ bundles of this type.   Manifolds of this type were studied by one of the authors in papers \cite{Zar14}(projective case)
  and \cite{Zar19} (non-algebraic case).   The goal of this chapter is to present a unified approach for both situations. It is based on a construction motivated by symplectic geometry and  inspired by an  algebraic approach to theta functions   developed by \cite{Mum66}.  
The chapter starts with  symplectic   constructions, then the theta groups follow,   then we arrive to description of certain subgroups of $\Bim(\ov \LL).$

\section{Symplectic Group Theory}

This section contains elementary but useful facts about Jordan properties of central extensions of
 commutative groups by $\BC^{*}$.

Traditionally, some groups are written in the multiplicative form, and some in the additive one. We hope that no confusion will arise.

\begin{Definition} 
A {\sl symplectic pair} is a pair  $(A,e)$ that consists of  a commutative group $A$ and an alternating bilinear pairing
$$e: A \times A \to \BC^{*}.$$
Here {\sl alternating} means that
$$e(a,a)=1 \ \forall a \in A.$$
The bilinearity means that
$$e(a_1+a_2,b)=e(a_1,b) e(a_2,b), \ e(a,b_1+b_2)=e(a,b_1)e(a,b_2)
\ \forall a, a_1,a_2, b,b_1,b_2 \in A.$$
These properties imply that for all $a,b \in A$
$$1=e(a+b,a+b)=e(a,a)e(a,b)e(b,a)e(b,b)=e(a,b)e(b,a),$$
i.e.,
$$e(a,b)=e(b,a)^{-1} \ \forall a,b \in A.$$

\end{Definition}
As usual, $e$ gives rise to  the group homomorphism
\begin{equation}
\label{dualE}
\Psi_e: A \to \Hom(A,\BC^{*}), \ b \mapsto \{\Psi_e(b): A \to \BC^{*}, \ a \mapsto e(a,b)\}.
\end{equation}
A subgroup $B$ of $A$ is called {\sl isotropic} with respect to $e$ if
$$e(B,B)=\{1\}.$$

We define the  {\sl kernel} of $e$ as
$$\ker(e):=\{a \in A \mid e(a,A)=\{1\}\}=\ker(\Psi_e),$$
which is a subgroup of $A$ that is  isotropic with respect to $e$.

We say that $e$ is {\sl nondegenerate} if $\ker(e)=\{0\}$, i.e.,
$$\Psi_e: A \to \Hom(A,\BC^{*})$$ is an {\sl injective} homomorphism.
If $e$ is nondegenerate then we call $(A,e)$ a {\sl nondegenerate symplectic pair}.

\begin{Example}
\label{bHd}
Let $d$ be a positive integer, $\mathbf{S}_d=\left(\frac{1}{d}\BZ/\BZ\right)^2\cong (\BZ/d\BZ)^2$,
$$\mathbf{e}_d: \mathbf{S}_d\times \mathbf{S}_d \to \BC^{*}, \
(a_1+\BZ,b_1+\BZ), (a_2+\BZ,b_2+\BZ) \mapsto \exp(2\mathbf{\pi}\mathbf{i}d(a_1b_2-a_2 b_1)).$$
Then $(\mathbf{S}_d,\mathbf{e}_d)$  is a {\sl nondegenerate symplectic pair}.
\end{Example}

\begin{Remark}
\label{sumSym}
Let $(A_1e_1)$ and $(A_2,e_2)$  be nondegenerate symplectic pairs. Let us consider the bilinear alternating form
$$e_1 e_2: (A_1\oplus A_2) \times  (A_1\oplus A_2)\to \BC^{*}, $$
$$(a_1,a_2), (b_1,b_2) \mapsto e_1(a_1,b_1)\cdot e_2(a_2,b_2).$$
Then $(A_1\oplus A_2, e_1 e_2)$ is a {\sl nondegenerate symplectic pair}.
\end{Remark}

\begin{Remark}
\label{subPair}
If $(A,e)$  is a symplectic pair and $B$ is a subgroup of $A$ then $(B,e\mid_B)$ is also a symplectic pair.
Here $e\mid_B$ is the restriction of $e$ to $B \times B$.

\end{Remark}

\begin{Remark}
\label{barA}
\begin{itemize}
\item[(i)]
Each symplectic pair $(A,e)$ gives rise to a nondegenerate symplectic pair
$(\bar{A},\bar{e})$ where
\begin{equation}
\bar{A}=A/\ker(e), \ \bar{e}(a \ \ker(e), b \ \ker(e))=e(a,b) \ \forall a,b \in A.
\end{equation}
\item[(ii)]
Clearly, a subgroup $B$ of $A$ is isotropic with respect to $e$ if and only if its image $\bar{B}$ in $\bar{A}$
is isotropic with respect to $\bar{e}$. In particular, $B$ is isotropic if and only if $B+\ker(e)$ is isotropic.
\item[(iii)]
Let $B$ be a subgroup of $A$.  One may restate a property of $B$  to be isotropic with respect to $e$ as follows.
The  composition of $\Psi_e: A \to \Hom(A,\BC^{*})$ with the {\sl restriction map}
$\Hom(A,\BC^{*})\to \Hom(B,\BC^{*})$ is the group homomorphism
\begin{equation}
\label{AtoB}
A \overset{\Psi_e}{\to} \Hom(A, \BC^{*}) \to \Hom(B, \BC^{*}).
\end{equation}
 Clearly, the kernel $B^{\bot}$ of this homomorphism (which is the {\sl orthogonal complement} of $B$ in $A$
with respect to $e$) contains $B$ if and only if
$B$ is isotropic.
\item[(iv)]
Suppose that $B$ coincides with $B^{\bot}$.
This means that if $a \in A \setminus B$ then $e(B,a) \ne \{1\}$. In other words,
$B$ is a {\sl maximal} isotropic subgroup of $A$ with respect to $e$.

Conversely, suppose that $B$ is a {\sl maximal} isotropic subgroup of $A$ with respect to $e$.
Since $B$ is isotropic,
$$B\subset B^{\bot}\subset A,  \ e(B^{\bot},B)=\{1\}.$$
If $B^{\bot} \ne B$ then there is $a \in B^{\bot}\setminus B$ such that $e(a,B)=\{1\}$.
This implies that the subgroup $B_1$ of $A$ generated by $B$ and $a$ is isotropic, which contradicts the maximality of $B$.

It follows that $B=B^{\bot}$  if and only if $B$
is a {\sl maximal isotropic} subgroup of $A$.
\end{itemize}
\end{Remark}

\begin{Remark}
\label{finiteA}
Suppose that $A$ is finite. Then the finite groups $A$ and $\Hom(A,\BC^{*})$ are isomorphic (non-canonically);
in particular, they have the same order. It follows that in the case of finite $A$ the pairing $e$ is nondegenerate
if and only if $\Psi_e$ is a group {\sl isomorphism}.
\end{Remark}

\begin{Lemma}[Useful Lemma]
\label{Useful Lemma}
Let $(A,e)$ be a symplectic pair such that $A/\ker(e)$ is a finite group. If $B$ is a maximal isotropic subgroup of $A$
then the index $[A:B]$ equals $\sqrt{\#(A/\ker(e))}$. In particular, if $e$ is nondegenerate then 
$$[A:B]=\sqrt{\#(A)}=\#(B).$$
\end{Lemma}

\begin{proof}[Proof of Usefull Lemma]
In light of Remark \ref{barA}, $B$ contains $\ker(e)$ and therefore it suffices to prove the desired result for nondegenerate
$(\bar{A},\bar{e})$ (instead of $(A,e)$). In other words, without loss of generality, we may assume that
$\ker(e)=\{0\}$, i.e.,
$A=\bar{A}$  is {\sl finite}  and $e=\bar{e}$ is {\sl nondegenerate}.  

Since $\BC^{*}$ is a divisible group, every group homomorphism $B \to \BC^{*}$ extends to a group homomorphism
$A \to \BC^{*}$. This means that the restriction map
$\Hom(A, \BC^{*}) \to \Hom(B, \BC^{*})$ is surjective. Since $A$ is finite, the nondegeneracy of $e$ means (in light of Remark \ref{finiteA})
that
$\Hom(A, \BC^{*})=\Psi_e(A)$. On the other hand, the maximality of $B$ means that the kernel of the {\sl surjective} composition
$$A \overset{\Psi_e}{\cong} \Hom(A, \BC^{*}) \twoheadrightarrow \Hom(B, \BC^{*})$$
coincides with $B$ (see Remark \ref{barA})  and therefore there is an {\sl injective} group homomorphism
$$A/B \hookrightarrow \Hom(B, \BC^{*}),$$
which is also surjective and therefore is an isomorphism.
This implies that 
$$\#(A/B)=\#\big(\Hom(B, \BC^{*})\big)=\#(B),$$ which ends the proof
if we take into account that $\#(A/B)=\#(A)/\#(B)$.
\end{proof}

\begin{Remark}
\label{liftF}
Suppose that $\ker(e)$  is either {\sl finite} or {\sl divisible}. Then every finite subgroup $\bar{B}$ of $\bar{A}$ is the image of a 
finite subgroup $B\subset A$ under $A \twoheadrightarrow \bar{A}$. Indeed, if $\ker(e)$ is finite then one may take as $B$ the preimage
of $\bar{B}$ in $A$.  If $\ker(e)$ is divisible then it is a direct summand of $A$, i.e., $A$ splits into a direct sum
$A=\ker(e)\oplus A^{\prime}$ and the map $A \to \bar{A}$ induces an isomorphism $A^{\prime} \cong \bar{A}$. Now one may take
as $B$ the (isomorphic) preimage of $\bar{B}$ in $A^{\prime}$.
\end{Remark}

\begin{Definition}
A symplectic pair $(A,e)$ is called {\sl almost isotropic} if there exists a positive integer $D$
that enjoys the following property.

Each finite subgroup $\mathcal{B}$ of $A$ contains an {\sl isotropic} (with respect to $e$) subgroup $\mathcal{A}$
such that the index $[\mathcal{B}: \mathcal{A}] \le D$. Such a smallest $D$ is called the {\sl isotropy defect} of $(A,e)$ and denoted
by $D_{A,e}$.
\end{Definition}

\begin{Example}
If $e \equiv 1$ then every subgroup is isotropic and therefore $D_{A,e}=1$.

\end{Example}

\begin{Remark}
\label{isoD}
Suppose that $\ker(e)$ is either finite or divisible.
\begin{itemize}
\item[(i)]
It follows from Remarks \ref{liftF} and \ref{barA} that $(A,e)$ is almost isotropic if and only if
$(\bar{A},\bar{e})$ is almost isotropic. In addition, if this is the case then 
\begin{equation}
\label{DeA}
D_{A,e}=D_{\bar{A},\bar{e}}.
\end{equation}
Indeed, let $\mathcal{A}$ be a finite subgroup of $A$ and $B$ an isotropic subgroup of largest possible order in $\mathcal{A}$.
In particular,  $B$ is a maximal isotropic subgroup of $\mathcal{A}$. Since
$B_1=B+\left(\mathcal{A}\cap \ker(e)\right)$ is an isotropic subgroup of $\mathcal{A}$ that contains $B$, the maximality of $B$ implies that
$B_1=B$, i.e., $B \supset \mathcal{A}\cap \ker(e)$. This implies that the index $(\mathcal{A}:B)$ equals the index
$[\bar{\mathcal{A}}:\bar{B}]$ where the subgroups $\bar{\mathcal{A}}$ and $\bar{B}$ are the images in $\bar{A}$ of 
$\mathcal{A}$ and $B$ respectively. Taking into account that $\bar{B}$ is an isotropic (with respect to $\bar{e}$) subgroup of
finite group $\bar{\mathcal{A}}\subset \bar{A}$, we conclude that
$$D_{A,e} \ge D_{\bar{A},\bar{e}}.$$

Conversely, suppose that $\bar{B}$ is an isotropic (with respect to $\bar{e}$) subgroup of maximal order in a
finite group $\bar{\mathcal{A}}\subset \bar{A}$. As above, this implies that 
$\bar{B}$ is a maximal isotropic subgroup of $\bar{\mathcal{A}}$.
By Remark \ref{liftF},  $A$ contains a finite subgroup $\mathcal{A}$,
whose image in $\bar{A}$ coincides with  $\bar{\mathcal{A}}$. Let $B$ the preimage of $\bar{B}$ in $\mathcal{A}$.
Then $B$ is isotropic with respect to $e$ and the index $[\mathcal{A}:B]$ coincides with the index
$[\bar{\mathcal{A}}:\bar{B}]$. This implies that
$$D_{A,e} \le D_{\bar{A},\bar{e}},$$
which ends the proof.

\item[(ii)]
Assume additionally that $\bar{A}$ is {\sl finite}. Applying Lemma \ref{Useful Lemma} to  subgroups of $\bar{A}$ and using \eqref{DeA}, we conclude that
\begin{equation}
\label{DeAf}
D_{A,e}=D_{\bar{A},\bar{e}}=\sqrt{\#(\bar{A})}.
\end{equation}

\end{itemize}

\end{Remark}

\begin{Definition}
A {\sl theta group} attached to a symplectic pair $(A,e)$ is a group $G$ that sits in a short exact sequence
\begin{equation}
\label{ThetaExact}
1 \to \BC^{*} \overset{i}{\to} G \overset{j}{\to} A \to 0
\end{equation}
that enjoys the following properties.

The image of $\BC^{*}$ is a {\sl central} subgroup of $G$, and the alternating {\sl commutator} pairing
$$A \times A \to \BC^{*},   \ j(g_1), j(g_2)\mapsto i^{-1}\left(g_1 g_2 g_1^{-1}g_2^{-1}\right)\in \BC^{*} \ \forall g_1,g_2 \in G$$ 

attached to  exact sequence \eqref{ThetaExact} coincides with $e$.
\end{Definition}

\begin{Remark}
Every {\sl central} extension $G$ of a commutative group $A$ by $\BC^{*}$ gives rise to the  symplectic pair $(A,e)$ where
$e(a_1,a_2)\in \BC^{*}$ is the commutator of preimages of $a_1,a_2$ in $G$ (for all $a_1,a_2 \in A$). This makes  $G$ a theta group attached to $(A,e)$. 

\end{Remark}

\begin{Remark}
\label{thetaBasic}
\begin{itemize}
\item[(i)]
Clearly, an element $g$ of the theta group $G$ lies in the center of $G$  if and only if 
$$e(j(g), j(h))=1 \ \forall h \in G.$$
Since $j(G)=A$, the element $g$ is central if and only if $j(g)\in \ker(e)$. This implies that the center of $G$ coincides
with $j^{-1}\left(\ker(e)\right)$.

\item[(ii)]
Clearly,  a subgroup $H$ of $G$ is commutative if and only if its image $j(H)\subset A$ is an isotropic subgroup of $A$ with respect to $e$.
\end{itemize}
\end{Remark}

\begin{Remark}
\label{subTheta}
Let $G$ be a theta group that sits in the short exact sequence \eqref{ThetaExact}. If $B$ is a subgroup of $A$
then obviously the preimage $j^{-1}(B)$ is a theta group attached to the symplectic pair $(B,e\mid_B)$.

\end{Remark}

\begin{Lemma}
\label{liftingT}
Let $B$ be a finite subgroup of $A$. Then there exists a finite subgroup $\tilde{B}$ of the theta group  $G$ such that $j(\tilde{B})=B$.
\end{Lemma}

\begin{proof} 
In what follows, we identify $\BC^{*}$ with its image in $G$ and view it as a certain central subgroup of $G$.
Let $d$ be the {\sl exponent} of $B$. 

Let us consider  the finite multiplicative subgroups $\mu_d$ and $\mu_{d^2}$
of all $d$th roots of unity and $d^2$th roots of unity,  respectively, in $\BC^{*}$. We have
$$\mu_d \subset \mu_{d^2}\subset \BC^{*} \subset G;$$
in addition,
\begin{equation}
\label{eB}
e(B,B) \subset e(B,A)\subset \mu_d.
\end{equation}

For every $b \in B$ choose its {\sl lifting} $\tilde{b} \in G$ such that
\begin{equation}
\label{liftBd}
\tilde{b}^d=1, \ \tilde{b}^{-1}=\widetilde{b^{-1}}\ \forall b \in B;
\end{equation}
this is possible, since $\BC^{*}$ is a central divisible subgroup of $\BC^{*}$. Indeed,
let $\tilde{b}_1\in G$ be any lifting of $b$ to $G$, i.e., $j(\tilde{b}_1)=b$. Then
$$z_1:=\tilde{b}_1^d \in \ker(j)=\BC^{*}.$$
Let us choose any
$$z=\sqrt[d]{z_1}\in \BC^{*}$$ and put $\tilde{b}=z^{-1}\tilde{b}_1\in G$. We have
$$j(\tilde{b})=j(z^{-1})+j(\tilde{b}_1)=0+b=b; \ \tilde{b}^d=(z^{-1})^d\tilde{b}_1^d=z_1^{-1} z_1=1.$$

  Let us put
$$\tilde{B}:=\{\gamma \tilde{b}\mid \gamma \in \mu_{d^2}, b \in B\}\subset G.$$
Clearly,  $\tilde{B}$ is finite, $j(\tilde{B})=B$, and
$$ 1 \in \mu_{d^2}\subset \tilde{B}=\tilde{B}^{-1}:=\{u^{-1}\mid u \in \tilde{B}\}$$
(the latter equality follows from the invariance of the central subgroup
 $\mu_{d^2}$ and the subset $\{\tilde{b}\mid b\in B\}$ under the map $u \mapsto u^{-1}$).

So, in order to prove that $\tilde{B}$ is a subgroup of $G$, it suffices to check that $\tilde{B}$ is closed under multiplication in $G$.
Let $b_1,b_2\in B$ and $b_3=b_1+b_2\in B$. We need to compare $\tilde{b}_1\tilde{b}_2$ and $\tilde{b}_3$ in $G$. Clearly, there is $\gamma\in \BC^{*}$ such that
$$ \tilde{b}_3=\gamma \tilde{b}_1\tilde{b}_2.$$

Notice that
$$\tilde{b}_1^d=\tilde{b}_2^d=\tilde{b}_3^d=1 \in \BC^{*} \subset G.$$
On the other hand, in  light of \eqref{eB},
$$\gamma_0:=\tilde{b}_1\tilde{b}_2 \tilde{b}_1^{-1}\tilde{b}_2^{-1}=e(b_1,b_2)\in \mu_d\subset \BC^{*} \subset G.$$

It follows that the images of $\tilde{b}_1$ and $\tilde{b}_2$ in the quotient $G/\mu_d$ do commute and therefore the
image of  $\tilde{b}_1\tilde{b}_2$ in $G/\mu_d$ has order that divides $d$. This means that
$$\left(\tilde{b}_1\tilde{b}_2\right)^d\in \mu_d$$
and therefore
$$\left(\tilde{b}_1\tilde{b}_2\right)^{d^2}=1.$$
It follows that
$$1=\tilde{b}_3^{d^2}=\left(\gamma  \cdot \tilde{b}_1\tilde{b}_2\right)^{d^2}=\gamma^{d^2}\left(\tilde{b}_1\tilde{b}_2\right)^{d^2}
=\gamma^{d^2}\cdot 1=\gamma^{d^2}.$$

This implies that $\gamma^{d^2}=1$, i.e., $\gamma \in \mu_{d^2}$ and therefore
$$\tilde{b}_1\tilde{b}_2=\gamma^{-1}\tilde{b}_3 \in \tilde{B}.$$
This ends the proof.
\end{proof}

\begin{Theorem}
\label{JordanTheta}
Let $(A,e)$ be a symplectic pair. Suppose that $\bar{A}=A/\ker(e)$ is finite.
Assume also that either $\ker(e)$ is divisible or $A$ is finite.
Let $G$ be a theta group attached to $(A,e)$. 

Then $G$ is a Jordan group and its Jordan index equals $\sqrt{\#(\bar{A})}$.
\end{Theorem}

\begin{proof}
Assume that $G$ sits in a short exact sequence \eqref{ThetaExact}. We may view $\BC^{*}$ as  a central subgroup of $G$.
Let $\tilde{\mathcal{A}}$ be a finite subgroup of $G$ and $\tilde{B}$ a commutative subgroup of {\sl maximal order} in 
$\tilde{\mathcal{A}}$. Then $\tilde{B}$ contains the intersection $\tilde{\mathcal{A}}\cap \BC^{*}$ and therefore the index
$[\tilde{\mathcal{A}}:\tilde{B}]$ coincides with the index $[j(\tilde{\mathcal{A}}):j(\tilde{B})]$. The commutativeness of 
$\tilde{B}$ means that $j(\tilde{B})$ is an {\sl isotropic} subgroup in $j(\tilde{\mathcal{A}})$. This implies that
$$J_G \ge D_{A,e}.$$

Conversely, let $\mathcal{A}$ be a finite subgroup of $A$ and $B$ is an isotropic subgroup of maximal order in $\mathcal{A}$.
By Lemma \ref{liftingT}, there is a finite subgroup  $\tilde{\mathcal{A}}$ of $G$ such that
$$j(\tilde{\mathcal{A}})= \mathcal{A}.$$ 
Let $\tilde{B}$ be the preimage of $B$ in $\tilde{\mathcal{A}}$.  Then
$$j(\tilde{B})=B, \ [\mathcal{A}:B]=[\tilde{\mathcal{A}}:\tilde{B}].$$
By  Remark \ref{thetaBasic}(ii), $\tilde{B}$ is commutative, because its image $B$ is isotropic. The equality of indices implies that
$$J_G \le D_{A,e},$$
which, combined with the previous opposite inequality, implies that $J_G =D_{A,e}$. Now the explicit formula for $J_G$
follows from Remark \ref{isoD}.

\end{proof}

\section{Symplectic linear algebra}
\label{LinSymp}
In this section we construct theta groups that arise from  (non necessarily nondegenerate)  alternating bilinear form on integral lattices. 

\begin{Definition}

\begin{itemize}
\item[(i)]
An {\sl admissible triple} is a triple $(V,E,\Pi)$ that consists of a
 nonzero  real vector space $V$ of finite  positive even dimension $2g$, an alternating $\BR$-bilinear form
$$E: V \times V \to \BR$$  on $V$, and
a discrete lattice $\Pi$ of rank $2g$ in $V$ such that $E(\Pi,\Pi)\subset \BZ$.
Let us put
$$\Pi_E^{\bot}:=\{v\in V \mid E(v,l)\in \BZ \ \forall l \in \Pi.$$
By  definition, $\Pi_E^{\bot}$ is a closed real Lie subgroup of $V$ that contains $\Pi$ as a discrete subgroup.
\item[(ii)]
A symplectic pair attached to the admissible triple $(V,E,\Pi)$ is a pair $(K_{E,\Pi}, e_{E})$ where 
$K_{E,\Pi}:=\Pi_E^{\bot}/\Pi$ and
 the bilinear pairing $e_E$ is defined as follows.
$$e_E: \Pi_E^{\bot}/\Pi \times \Pi_E^{\bot}/\Pi \to \BC^{*}, \ (v_1+\Pi,v_2+\Pi) \mapsto \exp(2\mathbf{\pi} \mathbf{i} E(v_1,v_2)).$$
\end{itemize}
\end{Definition}

\begin{Definition}
Recall that a subgroup $C$ of a commutative group $D$ is called {\sl saturated} if it enjoys the following equivalent properties.

\begin{itemize}
\item
There are no elements of finite order in the quotient $D/C$ except $0$.
\item
If $x$ is an element of $D$ such that there is a positive integer $m$ with $mx \in C$ then $x \in C$.
\end{itemize}
\end{Definition}

Our goal is to find the isotropy index of $(K_{E,\Pi}, e_E)$. In order to do that, let us consider the {\sl kernel} of $E$, i.e., the subset
$$\ker(E)=\{v \in V\mid E(v,V)=\{0\}\}\subset V.$$
Clearly, $\ker(E)$ is a real even-dimensional (recall that $E$ is alternating) vector subspace of $V$ containing $\Pi_E^{\bot}$.  Let us put
$$\Pi_0:=\Pi \bigcap \ker(E) \subset \ker(E).$$
Clearly, $\Pi_0$ is a {\sl saturated} subgroup of $\Pi$.
The integrality property of $E$ implies that the natural homomorphism of real vector spaces
$$\Pi_0\otimes\BR \to \ker(E), \  l_0\otimes \lambda \mapsto \lambda \cdot l_0 \ \forall l_0\in \Pi_0, \lambda\in \BR$$
is an isomorphism. In particular, the following conditions are equivalent.

\begin{itemize}
\item[(a)]
$E$ is nondegenerate, i.e., $\ker(E)=\{0\}$.
\item[(b)]
$\Pi_0=\{0\}$.
\end{itemize}

Let us consider several cases.

\begin{itemize}
\item[{\bf Case I}]
If $E\equiv 0$ then 
$$\Pi_E^{\bot}=V, K_{E,\Pi}=\Pi_E^{\bot}/\Pi=V/\Pi, e_E \equiv 1,$$
$\ker(e_E)=K_{E,\Pi}$ is divisible and $K_{E,\Pi}/\ker(e)=\{0\}$ is finite.
By Remark \ref{isoD},  the isotropy  defect  $D_{
K_{E,\Pi}, e_E}=1$.
\item[{\bf Case II}]
Suppose that $E$ is a {\sl nondegenerate} form.  
Let $\{s_1, \dots, s_{2g}\}$ be any basis of the $\BZ$-module $\Pi$. Clearly, it is also a basis of the $\BR$-vector space space $V$. Let 
$$\tilde{E}=\Big(E(s_j,s_k)\Big)\in \mathrm{Mat}_{2g}(\BZ)$$ 
be the  $2g \times 2g$ {\sl skew-symmetric} matrix of $E$ with respect to this basis with {\sl integer} entries.  Let
$\det(\tilde{E})$ and $\mathrm{Pf}(\tilde{E})$ be the {\sl determinant} of $\tilde{E}$  and the {\sl pfaffian} of $\tilde{E}$ respectively. Then
$$\det(\tilde{E})\in \BZ,  \mathrm{Pf}(\tilde{E})\in \BZ; \ 0 \ne \det(\tilde{E})=\mathrm{Pf}(\tilde{E})^2.$$
In particular,  $\det(\tilde{E})$ is a {\sl positive integer}. Clearly, $\det(\tilde{E})$ does not depend on the choice of a basis of $\Pi$
and therefore $|\mathrm{Pf}(\tilde{E})|$ does not depend on this choice as well. That is why we denote
$\det(\tilde{E})$ by $\det(E,\Pi)$ and $|\mathrm{Pf}(\tilde{E})|$ by $|\mathrm{Pf}(E,\Pi)|$.

We claim that {\sl $\Pi_E^{\bot}/\Pi$ is  finite, the form
$$e_E: \Pi_E^{\bot}/\Pi \times \Pi_E^{\bot}/\Pi \to \BC^{*}$$
is {\sl nondegenerate} and its isotropy defect is  $|\mathrm{Pf}(E,\Pi)|$.}

Indeed, there is a basis $\{f_1,h_1, \dots, f_g,h_g\}$ of $\Pi$ such that
$$E(f_j, h_k)=-E(h_k,f_j)=0 \ \forall j \ne k \ (1 \le j,k \le g)$$
(\cite[Ch. XV, Ex. 17 on p. 598]{Lang}).
Let us put 
$$d_j=E(f_j,h_j)\in \BZ \ \forall j=1, \dots, g.$$

The nondegeneracy of $E$ means that all $d_j \ne 0$. Replacing if necessary, $h_j$ by $-h_j$, we may and will assume that
all $d_j>0$.  If $\tilde{E}$ is the matrix of $E$ with respect to this basis then the {\sl pfaffian}  $\mathrm{Pf}(\tilde{E})$ 
of $\tilde{E}$ is $\pm \prod_{j=1}^g d_j$ and therefore
$$|\mathrm{Pf}(E,\Pi)|=\prod_{j=1}^g d_j.$$
We claim that
\begin{equation}
\label{botEE}
\Pi_E^{\bot}=\oplus_{j=1}^g \frac{1}{d_j}\left(\BZ \cdot f_j\oplus \BZ\cdot h_j\right).
\end{equation}
Indeed, a vector 
$$v=\left(\sum_{j=1}^g \lambda_jf_j\right)+ \left(\sum_{j=1}^g \mu_j h_j\right) \text{ with all } \ \lambda_j,\mu_j\in\BR$$
lies in $\Pi_E^{\bot}$ if and only if  
$$\BZ\ni E(f_j,v)=d_j \mu_j, \ \BZ \ni(h_j,v)=-d_j \lambda_j \ \forall j,$$
 which is obviously equivalent to \eqref{botEE}.

It follows from \eqref{botEE} that
\begin{equation}
\label{KEpi}
\Pi_E^{\bot}/\Pi=\oplus_{j=1}^g \frac{1}{d_j}\left(\BZ \cdot f_j\oplus \BZ\cdot h_j\right)/\left(\BZ \cdot f_j\oplus \BZ\cdot h_j\right).
\end{equation}

Clearly, different summands of $\Pi_E^{\bot}/L$  are mutually orthogonal with respect to $e_E$ while the
restriction of $e_E$ to each 
$$\frac{1}{d_j}\left(\BZ \cdot f_j\oplus \BZ\cdot h_j\right)/\left(\BZ \cdot f_j\oplus \BZ\cdot h_j\right)
$$ is isomorphic
to $(\mathbf{S}_{d_j},\mathbf{e}_{d_j})$. In particular, this restriction is a nondegenerate symplectic pair.
This implies that the direct sum $(\Pi_E^{\bot}/\Pi,e_E)$ is also a nondegenerate symplectic pair.
On the other hand, clearly, 
$$\Pi_E^{\bot}/\Pi\cong \oplus_{j=1}^g \left(\frac{1}{d_j}\BZ/\BZ\right)^2.$$
This implies that
$$\#(\Pi_E^{\bot}/\Pi)\cong \prod_{j=1}^g d_j^2, \ \sqrt{\#(\Pi_E^{\bot}/\Pi)}=\prod_{j=1}^g d_j=|\mathrm{Pf}(E,\Pi)|.$$
This implies that $(K_{E,\Pi},e_E)$ is almost isotropic and its {\sl isotropy defect} is $|\mathrm{Pf}(E,\Pi)|$.
\item[{\bf Case IIbis}]
We keep the notation and assumptions of {\bf Case II}. Let us consider the form $nE$ where $n$ is a positive integer.
Then 
$$\Pi_{nE}^{\bot}=\frac{1}{n} \Pi_E^{\bot}= \oplus_{j=1}^g \frac{1}{nd_j}\left(\BZ \cdot f_j\oplus \BZ\cdot h_j\right),$$
$$\Pi_{nE}^{\bot}/\Pi\cong \oplus_{j=1}^g \left(\frac{1}{nd_j}\BZ/\BZ\right)^2,$$
$$\#(\Pi_{nE}^{\bot}/\Pi)=\prod_{j=1}^g (nd_j)^2, \ \sqrt{\#(\Pi_E^{\bot}/\Pi}=n^g\prod_{j=1}^g d_j=n^g\cdot| \mathrm{Pf}(E,\Pi)|.$$
Hence, the corresponding isotropy index
$$D_{K_{nE,\Pi}, e_{nE}}=n^g\cdot |\mathrm{Pf}(E,\Pi)|$$
for all positive integers $n$.

\item[{\bf Case III}]
Now let us consider the case of degenerate  nonzero $E$, i.e., the case when
$$\{0\} \ne \Pi_0 \ne \Pi.$$
Clearly, $\Pi_0$ is a free abelian group of a certain positive even rank $2g_0<2g$.
Since $\Pi_0$ is a saturated subgroup of $\Pi$, it is a {\sl direct summand} of $\Pi$, i.e., there is a (nonzero saturated) subgroup $\Pi_1$ in $\Pi$
that is a free abelian group of rank $2g-2g_0$ and such that
$$\Pi=\Pi_0 \oplus \Pi_1.$$
In other words, there is a basis $\{u_1, \dots, u_{2g_0}; v_1, \dots, v_{2g-2g_0}\}$ of the $\BZ$-module $\Pi$ such that
$\{u_1, \dots, u_{2g_0}\}$ is a basis of $\Pi_0$ and $\{v_1, \dots, v_{2g-2g_0}\}$ is a basis of $\Pi_1$.  Let us consider
the real vector subspaces
$$V_0:=\sum_{j=1}^{2g_0}\BR u_j\subset V, \ V_1:=\sum_{k=1}^{2g_1}\BR v_k\subset V.$$
Clearly,
$$V=V_0\oplus V_1; \ \Pi_0=V_0\cap \Pi, \ \Pi_1=V_1\cap \Pi;$$
in addition, $V_0=\ker(E)$, the subspaces $V_0$ and $V_1$ are mutually orthogonal with respect to $E$ and the restriction of $E$ to $V_1$
$$E_1: V_1 \times V_1 \to \BR, \ u,v \mapsto E(u,v)$$
is a {\sl nondegenerate} alternating bilinear form.
It is also clear that
$$E_1(\Pi_1,\Pi_1)=E(\Pi_1,\Pi_1)\subset E(\Pi,\Pi)\subset \BZ.$$
On the other hand, the  restriction of $E$ to $V_0$, which we denote by $E_0$,  is identically $0$.
This implies that (as the symplectic pair)
$$(K_{E,\Pi},e_E)=(K_{E_0,\Pi_0}, e_{E_0})\oplus (K_{E_1,\Pi_1}, e_{E_1}).$$
By {\bf Case I} applied to $(V_0,E_0,\Pi_0)$,  the group $K_{E_0,\Pi_0}=V_0/\Pi_0$ is {\sl divisible} as a quotient of a complex vector space,  and $ e_{E_0}\equiv 1$.
By {\bf Case II} applied to $(V_1,E_1,\Pi_1)$, the group
$K_{E_1,\Pi_1}$ is  {\sl finite}  of order $|\mathrm{Pf}(E,\Pi)|^2$
and the pairing
$$e_{E_1}: K_{E_1,\Pi_1} \times K_{E_1,\Pi_1} \to \BC^{*}$$
is {\sl nondegenerate}. This implies that
$\ker(e_E)=K_{E_0,\Pi_0}$ and therefore $\ker(e_E)$ is divisible and
$$K_{E,\Pi}/\ker(e_E)=K_{E_1,\Pi_1}$$
is a finite group. This implies that $(K_{E,\Pi},e_E)$ is almost isotropic and its isotropy defect, by \thmref{JordanTheta},
\begin{equation}
\label{dgD}
d_{K_{E,\Pi},e_E}=\sqrt{\#(K_{E,\Pi}/\ker(e_E))}=\sqrt{\#(K_{E_1,\Pi_1})}=|\mathrm{Pf}(E_1,\Pi_1)|.
\end{equation}
\item[{\bf Case IIIbis}]
We keep the notation and assumptions of {\bf Case III}. Let $$M: V \times V \to \BR$$ be an alternating bilineaer form that enjoys the following
properties. 

\begin{enumerate}
\item
$M(\Pi,\Pi)\subset \BZ$.
\item
$\ker(E)\subset \ker(M)$.
\end{enumerate}
If $n$ is an integer then we write $\mathbf{M}(n)$ for the alternating bilnear form $nE+M$ on $V$. Clearly,
$$\mathbf{M}(n)(\Pi,\Pi)\subset n E(\Pi,\Pi)+M(\Pi,\Pi)\subset n \BZ+\BZ=\BZ.$$

\begin{Lemma}\label{claim}  There exists a degree $(g-g_0)$ polynomial $\mathcal{P}(t)\in \BZ[t]$  with integer coefficients and leading
coefficient $|\mathrm{Pf}(E_1,\Pi_1)|$ that enjoys the following property.

{\sl For all but finitely many positive integers $n$ the symplectic pair
 $(K_{\mathbf{M}(n),\Pi}, e_{\mathbf{M}(n)})$ is almost isotropic and its
isotropy defect}
\begin{equation}
\label{pencilD}
 D_{K_{\mathbf{M}(n),\Pi}, e_{\mathbf{M}(n)}}=\mathcal{P}(n). 
\end{equation}\end{Lemma}

\begin{proof}
Indeed, let $M_1:V_1\times V_1 \to \BR$ be the restriction of $M$ to $V_1\times V_1$. 
 Let $\tilde{E}_1$ and $\tilde{M}_1$ be the matrices of $E_1$ and$M_1$ with respect to the basis 
$\{f_1, \dots, f_{2g-2g_0}\}$ of $\Pi_1$. The nondegeneracy of $E_1$ implies that 
$\det(\tilde{E}_1) \ne 0$ and therefore
$$\det(n \tilde{E}_1+\tilde{M}_1)=\det(\tilde{E}_1) \det (n \mathrm{I}_{2g-2g_0}+\tilde{E}_1^{-1} \tilde{M}_1)$$
does {\sl not} vanish for all but finitely many integers $n$. (Hereafter $\mathrm{I}_{2g-2g_0}$ is the identity square matrix
of size $2g-2g_0$.) Taking into account that  $n \tilde{E}_1+\tilde{M}_1$ is the matrix of the restriction of $nE+M=\mathbf{M}(n)$, we obtain that
 for all but finitely many integers $n$
\begin{equation}
\label{conditionN}
\ker(\mathbf{M}(n))=\ker(nE+M)=\ker(E)=V_0.
\end{equation}
In what follows, we assume that $n$ is any integer that enjoys the property \eqref{conditionN}
(this assumption excludes only finitely many  integers $n$). Now we may apply results of {\bf Case III} to $\mathbf{M}(n)=nE+M$ (instead of $E$) and
get that $(K_{\mathbf{M}(n),\Pi},e_{\mathbf{M}(n)})$ is almost isotropic and its isotropy defect is
$$|\mathrm{Pf}(nE_1+M_1,\Pi_1)|=\sqrt{\det(nE_1+M_1,\Pi_1)}=
\sqrt{\det(\tilde{E}_1) \det (n \mathrm{I}_{2g-2g_0}+\tilde{E}_1^{-1} \tilde{M}_1)}=$$
$$|\mathrm{Pf}(E_1,\Pi_1)|
\sqrt{\det (n \mathrm{I}_{2g-2g_0}+\tilde{E}_1^{-1} \tilde{M}_1)}.$$
Clearly,  there is a polynomial $\mathcal{Q}(t) \in \BZ[t]$ with integer coefficients such that  for {\sl all} our $n$
$$\mathcal{Q}(n)=\mathrm{Pf}(n\tilde{E_1}+\tilde{M}_1).$$
This implies that
$$\mathcal{Q}(n)^2=\det(n\tilde{E}_1+\tilde{M}_1)=\det(\tilde{E}_1) \det (n \mathrm{I}_{2g-2g_0}+\tilde{E}_1^{-1} \tilde{M}_1).$$
It is also clear  that there exists a {\sl monic} degree $(2g-2g_0)$ polynomial $\mathcal{R}(t) \in \BQ[t]$ with rational coefficients such that
for all our $n$
$$\mathcal{R}(n)=\det (n \mathrm{I}_{2g-2g_0}+\tilde{E}_1^{-1} \tilde{M}_1).$$
This implies that
$$\mathcal{Q}(n)^2=\det(\tilde{E}_1) \mathcal{R}(n)=|\mathrm{Pf}(E_1,\Pi_1)|^2 \mathcal{R}(n).$$
Since $\mathcal{R}(t)$ is monic of degree $(2g-2g_0)$, we have $$\deg(\mathcal{Q})=(g-g_0)$$ and 
the leading coefficient of $\mathcal{Q}(t)$ is $\pm |\mathrm{Pf}(E_1,\Pi_1)|$.

Let   $\mathcal{P}(t)$ be the polynomial with posittive leading coefficient that coincides either with $\mathcal{Q}(t)$
or with $-\mathcal{Q}(t)$. Then $\mathcal{P}(t)$  is a degree $(g-g_0)$ polynomial with integer coefficients and leading coefficient
 $|\mathrm{Pf}(E_1,\Pi_1)|$ such that
$$\mathcal{P}(n)=\pm \mathrm{Pf}(n\tilde{E_1}+\tilde{M}_1).$$
Since the leading coefficient of $\mathcal{P}(t)$ is positive,  $\mathcal{P}(n)$ is positive for all but finitely many positive integers $n$. This implies that 
$$\mathcal{P}(n)=|\mathrm{Pf}(n\tilde{E_1}+\tilde{M}_1)|=
|\mathrm{Pf}(nE_1+M_1,\Pi_1)|$$
for all such $n$. This ends the proof.
\end{proof}

\end{itemize}

\begin{Theorem}
\label{JordanUnbounded}
Let $g$ be a positive integer, $V$ a $2g$-dimensional real vector space,
$(V,E,\Pi)$ and $(V,M,\Pi)$ are admissible triples such that
$$E \not\equiv 0, \ \ker(E) \subset \ker(M).$$
If $n$ is an integer then  we write $\mathbf{M}(n)$ for the alternating bilinear form
$nE+M$ on $V$.

Let $\mathcal{G}$ be a group that enjoys the following properties.

There are infinitely many positive integers $n$ such that $\mathcal{G}$ contains
a subgroup $G_n$ that is a theta group attached to $(K_{\mathbf{M}(n),\Pi}, e_{\mathbf{M}(n)})$.

Then  $\mathcal{G}$ is not Jordan.
\end{Theorem}

\begin{proof}
It suffices to check that the Jordan index of $G_n$ tends to infinity while $n$ tends to infinity.
But this assertion follows from results of {\bf Cases {\bf II}, {\bf III}, {\bf IIIbis}} of this section combined with
Theorem \ref{JordanTheta}.

\end{proof}

\section{Line bundles over tori  and theta groups}\label{Appel}

In this section we use results from previous two sections in order to  compute the Jordan index of certain  automorphism groups of holomorphic line bundles on complex tori.

Let $V$ be a complex vector space of finite positive dimension $g$,
$\Pi$ a discrete lattice of rank $2g$ in $V$,
$$H: V \times V \to \BC$$
an Hermitian form on $V$ such that its imaginary part
$$E:V \times V \to \BR,  \ (v_1,v_2) \mapsto \mathrm{Im}(H(v_1,v_2))$$
satisfies 
$$E(\Pi,\Pi) \subset \BZ.$$
One may view  $V$ as the $2g$-dimensional real vector space. Then $E$ becomes an alternating $\BR$-bilinear form on $V$ such that
$$E(\mathbf{i}v_1,\mathbf{i}v_2)=E(v_1,v_2) \ \forall v_1,v_2 \in V.$$

In addition,
$$H(v_1,v_2)=E(\mathbf{i}v_1,v_2) +\mathbf{i} E(v_1,v_2)  \ \forall v_1,v_2 \in V$$
(see \cite[Lemma 2.1.7]{CAV}).
This implies that
 $H$ and $E$ have the same kernels, i.e.,
$$\ker(H):=\{w\in V\mid H(w,V)=0\}=\{w\in V\mid E(w,V)=0\}=:\ker(E).$$

\begin{Definition}[see \cite{BL}, \cite{Kempf}]
\label{AHdefinition}
A pair $(H, \alpha)$ is called an {\sl Appel-Humbert data} (A.-H. data) on $(V,\Pi)$ if $H,E,\Pi$ are as above and
$\alpha$ is a map (``semicharacter")
$$\alpha: \Pi \to \mathbf{U}(1)=\{z \in \BC, |z|=1\}\subset \BC^{*}$$
such that
\begin{equation}
\label{alphaL}
\alpha(l_1+l_2)=(-1)^{E(l_1,l_2)}\alpha(l_1)\alpha(l_2) \ \forall l_1,l_2\in \Pi.
\end{equation}
In particular, if $l_1=l_2=0$ then $\alpha(0)=\alpha(0)^2$, i.e.,
$$\alpha(0)=1.$$
\end{Definition}
Notice that a classical theorem of Appel-Humbert (\cite[Theorem 1.5]{Kempf}, \cite[Theorem 21.1]{BL}) classifies holomorphic line bundles on the complex torus $V/\Pi$
in terms of A.-H. data.  

The construction of Section \ref{LinSymp} gives us the symplectic pair
$(K_{E,\Pi}, e_E)$.  The aim of this section is to constuct a certain theta group $\mathfrak{G}(H,\alpha)$ attached to this pair that corresponds to any A.-H. data $(H,\alpha)$.  
We define $\tilde{\mathfrak{G}}(H,V)$ as a certain group of biholomorphic automorphisms of $\mathcal{L}(H,\alpha)$. Here
$\LL(H,\alpha)$ is the {\sl total body} of the holomorphic line bundle $\mathcal{L}(H,\alpha)$ over $V/\Pi$ that corresponds to A.-H. data $(H,\alpha)$.

First, we start with a certain theta group $\tilde{\mathfrak{G}}(H,V)$ attached to the symplectic pair
$(V, \tilde{e}_E)$ where
$$\tilde{e}_E: V \times V \to \BC^{*}, \ (v_1,v_2) \mapsto \exp\left(2\mathbf{\pi}\mathbf{i}E(v_2,v_1)\right).$$
We define $\tilde{\mathfrak{G}}(H,V)$ as a certain group of holomorphic automorphisms of 
$$V_{\mathbb{L}}:=V \times \mathbb{L}$$ where $\mathbb{L}$
is a one-dimensional $\BC$-vector space. Namely,  $\tilde{\mathfrak{G}}(H,V)$ consists of automorphisms $\mathcal{B}_{H,u,\lambda}$
indexed by $u\in V,\lambda \in \BC^{*}$ that are defined as follows.
$$\mathcal{B}_{H,u,\lambda}:  (v,c) \mapsto \big(v+u, \lambda \exp(\mathbf{\pi}H(v,u)c\big) \ \forall v\in V, c \in \mathbb{L}.$$
One may easily check (see \cite[Sect. 2.1]{Zar19}) that the composition
\begin{equation}
\label{Ggroup}
\mathcal{B}_{H,u_1,\lambda_1} \circ\mathcal{B}_{H,u_2,\lambda_2}=\mathcal{B}_{H,u_1+u_2,\lambda_1 \lambda_2 \mu} \
\text{where} \
\mu=\exp(\mathbf{\pi} H(u_2,u_1))
\end{equation}
and the inverse
\begin{equation}
\label{inverseG}
\mathcal{B}_{H,u,\lambda}^{-1}=\mathcal{B}_{H,-u,\nu/\lambda} \ \text{where} \ \nu=\exp(-\mathbf{\pi}H(u,u)).
\end{equation}

This implies that  $\tilde{\mathfrak{G}}(H,V)$ is indeed a subgroup of the group of biholomorphic automorphisms of $V_{\mathbb{L}}$.
(Our  $\mathfrak{G}(H,\alpha)$  will be defined as a subquotient of $\tilde{\mathfrak{G}}(H,V)$.)
Notice that
 for all $\lambda \in \BC^{*}$ the automorphism 
$\mathcal{B}_{H,0,\lambda}$ sends every $(u,c)$ to $(u,\lambda c)$. This implies that the map
$$\mathrm{mult}:\BC^{*} \to \tilde{\mathfrak{G}}(H,V), \ \lambda \mapsto \mathcal{B}_{H,0,\lambda}$$
is an injective group homomorphism, whose image lies in the center of $\tilde{\mathfrak{G}}(H,V)$. This allows us to include $\tilde{\mathfrak{G}}(H,V)$ in a short exact sequence of groups
$$1 \to \BC^{*} \overset{\mathrm{mult}}{\to} \tilde{\mathfrak{G}}(H,V) \overset{\tilde{j}}{\to}  V \to 0$$
where $\tilde{j}$ sends $\mathcal{B}_{H,u,\lambda}$ to $u$. It follows from \eqref{Ggroup}  and \eqref{inverseG} 
(see also \cite[Sect. 2.1]{Zar19})
 that
\begin{equation}
\label{commutator}
\mathcal{B}_{H,u_1,\lambda_1} \circ\mathcal{B}_{H,u_2,\lambda_2}\circ \mathcal{B}_{H,u_1,\lambda_1}^{-1} 
\circ\mathcal{B}_{H,u_2,\lambda_2}^{-1}=\mathrm{mult}(\exp(2\mathbf{\pi}\mathbf{i}E(u_2,u_1))
=\mathrm{mult}(\tilde{e}_E(u_1,u_2)).
\end{equation}
This implies that $\tilde{\mathfrak{G}}(H,V)$ is a theta group attached to the symplectic pair $(V, \tilde{e}_E)$.

Let us consider the following subgroups of $\tilde{\mathfrak{G}}(H,V)$.

\begin{equation}
\label{subPii}
\tilde{\mathfrak{G}}(H,\Pi)=\tilde{j}^{-1}(\Pi)=\{\mathcal{B}_{H,u,\lambda}\mid \lambda\in \BC^{*}, u \in \Pi\};
\end{equation}
\begin{equation}
\label{subPi}
\tilde{\mathfrak{G}}(H, \Pi_E^{\bot})=\tilde{j}^{-1}( \Pi_E^{\bot})=\{\mathcal{B}_{H,u,\lambda}\mid \lambda\in \BC^{*}, u \in  \Pi_E^{\bot}\}.
\end{equation}

By Remark \ref{subTheta}, $\tilde{\mathfrak{G}}(H,\Pi)$ 
and  $\tilde{\mathfrak{G}}(H, \Pi_E^{\bot})$ are theta groups attached
to the symplectic pairs $(\Pi,\tilde{e}\mid_{\Pi})$ and $( \Pi_E^{\bot},\tilde{e}\mid_{ \Pi_E^{\bot}})$ respectively.
Since $\Pi \subset  \Pi_E^{\bot}$,  the group $\tilde{\mathfrak{G}}(H,\Pi)$ is a subgroup of $\tilde{\mathfrak{G}}(H, \Pi_E^{\bot})$.
It follows from \eqref{commutator} that $\tilde{\mathfrak{G}}(H,\Pi)$ is actually a {\sl central} subgroup of $\tilde{\mathfrak{G}}(H, \Pi_E^{\bot})$,
because
$$E(\Pi, \Pi_E^{\bot})=\{0\}.$$

We will define $\mathfrak{G}(H,\alpha)$ as a quotient of $\tilde{\mathfrak{G}}(H, \Pi_E^{\bot})$ by a certain central subgroup that depends on the  ``semicharacter" $\alpha$.  In order to define this subgroup, let us consider the {\sl discrete free} action of the group $\Pi$ on $V_{\mathbb{L}}$ by holomorphic
automorphisms defined as follows. An element $l$ of $\Pi$ acts as 
\begin{equation}
\label{AlVL}
\mathcal{A}_{H,\alpha,l}: V_{\mathbb{L}} \to V_{\mathbb{L}}, \ (v,c)
 \mapsto (v+l, c\alpha(l)\exp\big(\mathbf{\pi} H(v,l)+\mathbf{\pi}H(l,l)/2)\big) \ \forall v \in V, c \in \mathbb{L},
\end{equation}
i.e.,
\begin{equation}
\label{ABcalc}
\mathcal{A}_{H,\alpha,l}=\mult(\alpha(l))\mathcal{B}_{H,l,1}\in \tilde{\mathfrak{G}}(H,\Pi).
\end{equation}

Direct calculations that are based on \eqref{alphaL} show that 
$$\mathcal{A}_{H,\alpha,l_1}\mathcal{A}_{H,\alpha,l_2}=\mathcal{A}_{H,\alpha,l_1+l_2} \ \forall l_1,l_2 \in \Pi,$$
i.e., 
$$\mathbf{A}^{\Pi}: \Pi \to \tilde{\mathfrak{G}}(H,\Pi), \ l \mapsto \mathcal{A}_{H,\alpha,l}$$
is an {\sl injective} group homomorphism, whose image we denote by
$$\tilde{\Pi}=\tilde{\Pi}(H,\alpha):=\mathbf{A}^{\Pi}(\Pi)\subset  \tilde{\mathfrak{G}}(H,\Pi)\subset \tilde{\mathfrak{G}}(H, \Pi_E^{\bot}).$$
Notice that $\tilde{\Pi}$ meets $\mathrm{mult}(\BC^{*})$ precisely at the identity element of $\tilde{\mathfrak{G}}(H, \Pi_E^{\bot})$.
Notice that the  quotient
$V_{\mathbb L}/\tilde{\Pi}(H,\alpha)$ is precisely the total body $\LL(H,\alpha)$  of the holomorphic vector bundle 
$\mathcal{L}(H,\alpha)$ 
over $V/\Pi$
attached to the A.-H. data $(H,\alpha)$ where the structure map
$$p:\LL(H,\alpha)=V_{\mathbb{L}}/\tilde{\Pi}(H,\alpha) \to V/\Pi$$
is induced by the projection map
$$V_{\mathbb{L}} =V \times \mathbb{L} \to V$$
\cite[Ch. 2,  Sect. 2.2, p. 30]{CAV}.
Let us put 
\begin{equation}
\label{ThetaHalpha}
\mathfrak{G}(H,\alpha):=\tilde{\mathfrak{G}}(H, \Pi_E^{\bot})/\tilde{\Pi}(H,\alpha).
\end{equation}

The faithful action of $\tilde{\mathfrak{G}}(H, \Pi_E^{\bot})$ on $V_{\mathbb{L}}$ induces the {\sl faithful} action of
 $\mathfrak{G}(H,\alpha)$ on $\LL(H,\alpha)$. Under this action, each coset
 $$\mathcal{B}_{H,u,\lambda}\tilde{\Pi}\in \tilde{\mathfrak{G}}(H, \Pi_E^{\bot})/\tilde{\Pi}(H,\alpha)=\mathfrak{G}(H,\alpha)$$
maps {\sl $\BC$-linearly} and isomorphically the fiber of $p$ over $v+\Pi \in V/\Pi$ to the fiber over
 $(v+u)\Pi \in V/\Pi$ for any pair
$$ u+\Pi \in \Pi_E^{\bot}/\Pi\subset V/\Pi, \  \text{ and } \ v+\Pi \in V/\Pi,   \  \text{ and }  \ \lambda \in \BC^{*}.$$
In particular, $\mathrm{mult}(\lambda)\tilde{\Pi}$ acts as the automorphism $[\lambda]$ that leaves invariant each fiber of  $p:\LL(H,\alpha)\to V/\Pi$ 
and acts on this fiber (which is a one-dimensional $\BC$-vector space) as multiplication by $\lambda$
(for all $\lambda\in\BC^{*}$). Clearly, each $[\lambda]$ lies in the {\sl center} of $\mathfrak{G}(H,\alpha)$.

\begin{Lemma}
\label{thetaTheta}
The group  $\mathfrak{G}(H,\alpha)$ is a theta group attached to the symplectic pair $(K_{E,\Pi}, e_E)$.  
\end{Lemma}

\begin{proof}

Clearly,
$$\mathrm{[mult]}: \BC^{*} \to \mathfrak{G}(H,\alpha), \ \lambda \mapsto [\lambda]$$
is an {\sl injective} group homomorphism, whose image $\mathrm{[mult]}(\BC^{*})$ is a {\sl central} subgroup of $\mathfrak{G}(H,\alpha)$.
On the other hand, $\tilde{j}$ induces the surjective group homomorphism
$$j:\mathfrak{G}(H,\alpha) =\tilde{\mathfrak{G}}(H, \Pi_E^{\bot})/\tilde{\Pi}\twoheadrightarrow
\Pi_E^{\bot}/\Pi=K_{E,\Pi},$$
$$ \mathcal{B}_{H,u,\lambda}\tilde{\Pi} \mapsto u+\Pi \in \Pi_E^{\bot}/\Pi.$$
Clearly, the kernel of $j$ consists of all $\mathcal{B}_{H,0,\lambda}\tilde{\Pi}=\mathrm{[mult]}(\lambda)$, i.e., coincides
with $\mathrm{[mult]}(\BC^{*})$. Hence, $\mathfrak{G}(H,\alpha)$ sits in the short exact sequence
$$1\to \BC^{*} \overset{\mathrm{[mult]}}{\to} \mathfrak{G}(H,\alpha)  \overset{j}{\to} \Pi_E^{\bot}/\Pi \to 0.$$

It follows from \eqref{commutator} that $\mathfrak{G}(H,\alpha)$ is a theta group attached to  the symplectic pair $(K_{E,\Pi}, e_E)$.
\end{proof}

\begin{Remark}
\label{tensorAH}
It is well known \cite[Lemma 2.2.1]{CAV} that if $(H_1,\alpha_1)$ and $(H_2,\alpha_2)$ are A.H. data on  $(V,\Pi)$
then $(H_1+H_2,\alpha_1\alpha_2)$ is also an A.H. data on $(V,\Pi)$ and holomorphic vector bundles
$\mathcal{L}(H_1+H_2,\alpha_1\alpha_2)$ and $\mathcal{L}(H_1,\alpha_1)\otimes \mathcal{L}(H_2,\alpha_2)$ are canonically isomorphic.

\end{Remark}

\section{$\BP^1$-bundles bimeromorphic to the   direct product}\label{ttori}
In this section we prove  the non-Jordanness of the groups of bimeromorphic selfmaps of certain $\BP^1$-bundles over complex tori of positive algebraic dimension.

Let $V$ be a complex vector space of finite positive dimension $g$, $\Pi$ a discrete lattice of rank $2g$ in $V$ and
$T=V/\Pi$ the corresponding complex torus.  Recall that
$\mathbf{1}_T$ stands for the trivial holomorphic line bundle $T \times \BC$ over $T$.
If $x$ is point of $T$  then we write$\mathcal{L}_x$ for the fiber of a
holomorphic vector  bundle $\mathcal{L}$   over $T$, which is a one-dimensional complex vector space. 
We write $\bar{\mathcal{L}}$ for the projectivization $\BP(\mathcal{E})$ of the two-dimensional holomorphic
vector bundle $\mathcal{E}=\mathcal{L}\oplus \mathbf{1}_T$.  The fiber $\mathcal{E}_x$ of $\mathcal{E}$ over $x$
is the set of pairs $(s_x,c)$ where $s_x\in \mathcal{L}_x, c \in \BC$ and the fiber $\bar{\mathcal{L}}_x$ of $\bar{\mathcal{L}}$
over $x$ is the set of equivalence classes of $(s_x:c)$  where either $s_x \ne 0$ or $c\ne 0$ and the equivalence class
of  $(s_x:c)$ is the set of all
 $$(\mu s_x:\mu c), \ \mu \in \BC^{*}.$$

\begin{Lemma}
\label{ThetaEmbBP1}
Suppose that $\mathcal{L}=\mathcal{L}(H,\alpha)$ where $(H,\alpha)$ is an A.-H. data.
Then there is a natural group embedding
$$\mathfrak{G}(H,\alpha)\hookrightarrow \Aut(\overline{\mathcal{L}(H,\alpha)}).$$

\end{Lemma}

\begin{proof}

First, let us define the group embedding
\begin{equation}
\label{embedE}
\mathfrak{G}(H,\alpha)\hookrightarrow \Aut(\mathcal{L}(H,\alpha)\oplus\mathbf{1}_T)
\end{equation}
by the formula
\begin{equation}
\label{embedEx}
g:  \big(s_x,  (x,c) \big)\mapsto \big(g(s_x), (x+j(g),c)\big) \\
 \ \forall g \in \mathfrak{G}(H,\alpha),  x \in V/\Pi= T, c \in \BC,\ s_x \in \mathcal{L}_x\subset \mathcal{L}.
\end{equation}
In particular, $g$ induces an isomorphism of two-dimensional complex vector spaces between the fibers of
 $\mathcal{L}(H,\alpha)\oplus\mathbf{1}_T$ over $x$ and over $x+j(g)$. Since 
$\mathfrak{G}(H,\alpha)\to \Aut(\mathcal{L}(H,\alpha))$ is a group {\sl embedding}, we conclude that
if $j(g)=0$ then $g_x$
is  multiplication by a scalar if and only if $g$ is the identity element of $\mathfrak{G}(H,\alpha)$. This implies that
\eqref{embedE} and \eqref{embedEx} induce a group embedding
\begin{equation}
\label{embedP}
\mathfrak{G}(H,\alpha)\hookrightarrow \Aut(\BP(\mathcal{L}(H,\alpha)\oplus\mathbf{1}_T))=
\Aut(\overline{\mathcal{L}(H,\alpha)})
\end{equation}
such that each $g \in \mathfrak{G}(H,\alpha)$ sends every $(s_x:c)\in \mathcal{L}(H,\alpha)_x$ to $(g(s_x):c) \in \mathcal{L}(H,\alpha)_{x+j(g)}$.  This ends the proof.
\end{proof}

Let $\mathcal{L}$ be a holomorphic line bundle over the complex torus $T=V/\Pi$. Then 
$\mathcal{L} \cong \mathcal{L}(H,\alpha)$ for a certain (actually, precisely one) A.-H. data $H,\alpha)$ on $(V,\Pi)$  (\cite[Theorem 1.5]{Kempf}).
Let us denote by $\mathfrak{G}(\mathcal{L})$ the group $\mathfrak{G}(H,\alpha)$. By Lemma \ref{ThetaEmbBP1},
there exists a {\sl group embedding}
\begin{equation}
\label{embedPL}
\mathfrak{G}(\mathcal{L}) \hookrightarrow \Aut(\bar{\mathcal{L}}).
\end{equation}

\begin{Lemma}
\label{CP1Bim}
Let $\mathcal{L}$ and $\mathcal{N}$ be holomorphic line bundles over $T=V/\Pi$.
Assume that $\mathcal{L}$ admits a nonzero holomorphic section. Then  the compact complex 
manifolds $\bar{\mathcal{N}}$ and $\overline{\mathcal{L}^n\otimes \mathcal{N}}$ are bimeromorphic
for all positive integers $n$. In particular, for all such $n$ there is a {\sl group embedding}
\begin{equation}
\label{embedBim}
\mathfrak{G}(\mathcal{L}^n\otimes \mathcal{N}) \hookrightarrow \Bim(\bar{\mathcal{N}}).
\end{equation}
\end{Lemma}

\begin{proof}
Let $t$ be a nonzero section of $\mathcal{L}$. Then  $t^n$ is a nonzero section of $\mathcal{L}^n$.
So, it suffices to prove the Lemma for $n=1$, i.e., to prove that 
$\bar{\mathcal{L}}$ and $\overline{\mathcal{L}\otimes \mathcal{N}}$ are {\sl bimeromorphic}.

The holomorphic $\BC$-linear map of rank 2 vector bundles 
$$\mathcal{N}\oplus \mathbf{1}_T \to (\mathcal{L}\otimes \mathcal{N})\oplus \mathbf{1}_T, \
\big(s_x; (x, c)\big) \mapsto \big(s_x\otimes t(x); (x, c)\big) \ \forall x \in T, s_x \in \mathcal{N}_x,  c \in \BC$$
induces a bimeromorphic isomorphism of their projectivizations $\bar{\mathcal{N}}$ and $\overline{\mathcal{L}\otimes \mathcal{N}}$. Hence, the groups $\Bim(\bar{\mathcal{N}})$ and $\Bim(\overline{\mathcal{L}\otimes\mathcal{N}})$
are isomorphic. Now the second assertion  of our Lemma follows from Lemma \ref{ThetaEmbBP1}.

\end{proof}

\begin{Corollary}
\label{LandN}
We keep the notation and assumptions of Lemma \ref{CP1Bim}.
In particular, $\mathcal{L}$ is isomorphic to $\mathcal{L}(H, \alpha)$
and admits a nonzero holomorphic section.

Suppose that $\mathcal{N}$ is isomorphic to $\mathcal{L}(H_0, \beta)$
where the kernel $\ker(H_0)$ of the Hermitian form $H_0$
contains the kernel $\ker(H)$  of the Hermitian form $H$.

Then the group $ \Bim(\bar{\mathcal{N}})$ is not Jordan. 

\end{Corollary}

\begin{proof}
Let us consider the alternating $\BR$-bilinear forms
$E:=\mathrm{Im}(H)$ and $M:=\mathrm{Im}(H_0)$ on $V$.
We have
 $$\ker(E)=\ker(H)\subset \ker(H_0)=\ker(M)$$
and therefore $\ker(E) \subset \ker(M)$. Notice also that 
the alternating form $\mathbf{M}(n)=nE+M$ is the imaginary part of the Hermitian form $nH+H_0$
for all positive integers $n$; in addition, obviously, the holomorphic line bundle 
$$\mathcal{L}^n \otimes \mathcal{N} \cong
\mathcal{L}(H,\alpha)^n \otimes \LL(H_0,\beta)=\mathcal{L}(nH+H_0,\alpha \beta^n)=
\mathcal{L}(\mathbf{M}(n),\alpha \beta^n).$$

In light of Lemma \ref{CP1Bim}, there is a group embedding
$$\mathfrak{G}(nH+H_0,\alpha \beta^n) \hookrightarrow \Bim(\bar{\mathcal{N}}).$$

On the other hand, applying Lemma \ref{thetaTheta} to $(nH+H_0,\alpha \beta^n)$ (instead of $(H,\alpha)$),
we conclude that $\mathfrak{G}(nH+H_0,\alpha \beta^n)$ is a {\sl theta group} attached to
the {\sl symplectic pair} $\left(K_{\mathbf{M}(n),\Pi}, e_{\mathbf{M}(n)}\right)$.
Now the desired result follows from Theorem \ref{JordanUnbounded}.

\end{proof}

\begin{Definition}
\label{AlgModel}
Let $T=V/\Gamma$ be a complex torus. We write $T_a$ for its algebraic model, which is also
a complex torus (even an abelian variety) provided with a surjective holomorphic homomorphism of complex tori
$$\pi_a: T \twoheadrightarrow T_a$$
with connected kernel (actually, all the fibers of $\pi_a$ are connected) \cite[Ch. 2, Sect. 6]{BL}. We write $\dim_a(T)$
for $\dim(T_a)$ and call it the {\sl algebraic dimension} of $T$. 

Clearly,
$$\dim(T_a) \le \dim(T);$$
the equality holds if and only if $T=T_a$, i.e., $T$ is an {\sl abelian variety}.
\end{Definition}

\begin{Theorem} [Theorem 1.7 of \cite{Zar19}]
\label{torusBP1}
Suppose that a complex torus $T=V/\Pi$ has positive algebraic dimension.
Then $\Bim(T\times \BP^1)$ is not Jordan.
\end{Theorem}

\begin{proof}
Take $\mathcal{N}=\mathbf{1}_T$. Then $\bar{\mathcal{N}}=T\times \BP^1$. On the other hand,
$\mathcal{N}=\mathbf{1}_T \cong \LL(\mathbf{0},\mathbf{1})$ where $\mathbf{0}$ is the {\sl zero}
Hermitian form on $V$ and 
$$\mathbf{1}_{\Pi}: \Pi \to \{1\}\subset \mathbf{U}(1)\subset \BC^{*}$$
is the constant semicharacter (actually, a character) of $\Pi$ that  identically equals $1$. Clearly,
$$\ker(\mathbf{0})=V. $$

Since $\dim_a(T)>0$, the algebraic model $T_a$ is a {\sl positive-dimensional} abelian variety.
Then $T_a$ admits an {\sl ample} holomorphic line bundle $\mathcal{L}_a$ with a nonzero section. Since $\psi: T \to T_a$ is surjective,
the inverse image $\mathcal{L}=\psi^{*}\mathcal{L}_a$ is a holomorphic line bundle on $T$ that also admits a nonzero section.
We have $\mathcal{L}\cong \LL(H,\alpha)$ for some A.-H. data $(H,\alpha)$. Obviously,
$$\ker(H)\subset V=\ker(\mathbf{0}).$$
Therefore we may apply Corollary \ref{LandN} and obtain that the group $\Bim(\bar{\mathcal{N}})$ is {\sl not} Jordan.
It remains to recall that $\bar{\mathcal{N}}=T \times \BP^1$.

\end{proof}

 The following assertion is a generalization of Theorem \ref{torusBP1}.

\begin{Theorem}[A special case of Theorem 1.8 in \cite{Zar19}]
\label{liftAH}
Let $\psi: T \to A$ be a surjective holomorphic group homomorphism  from a  complex torus $T=V/\Pi$ to a positive-dimensional complex abelian variety $A$.
Let $\mathcal{M}$ be a holomorphic line bundle over $A$ and $\mathcal{F}$ be a holomorphic line bundle over $T$ that is isomorphic to the inverse image $\psi^{*}\mathcal{M}$.

Then the group $\Bim(\bar{\mathcal{F}})$ is not Jordan.
\end{Theorem}

\begin{proof}
A positive-dimensional complex abelian variety $A$ is a complex torus
$A=W/\Gamma$ (where $W$ is a complex vector space of finite positive dimension $m$ and $\Gamma$ a discrete lattice of rank $2m$ in $W$) that admits a {\sl polarization}, i.e., a {\sl positive} (and therefore {\sl nondegenerate}) Hermitian form
$$\mathbf{H}_A: W \times W \to \BC,$$
whose imaginary part
$$\mathbf{E}_A: W \times W \to \BR,  \ (w_1,w_2)\mapsto \mathrm{Im}(\mathbf{H}_A(w_1,w_2))$$
satisfies the condition
$$\mathbf{E}_A(\Gamma,\Gamma)\subset \BZ.$$
Replacing if necessary, $\mathbf{H}_A$ by $2\mathbf{H}_A$, we may and will assume that
$$\mathbf{E}_A(\Gamma,\Gamma)\subset 2\cdot \BZ.$$

Then obviously $(\mathbf{H}_A, \mathbf{1}_{\Gamma})$ is an A.H. data on $(W,\Gamma)$. 
The positiveness of  $\mathbf{H}_A$ implies that  the corresponding holomorphic line bundle $\LL(\mathbf{H}_A,\mathbf{1})$ over $A$ has a {\sl nonzero} holomorphic section (the corresponding theta function)  (see \cite[Theorem  2.1]{Kempf}).

It follows from \cite[Lemma 2.3.4 on p. 33]{BL} that every surjective holomorphic homomorphism $\psi: T \to A$ is induced by
a certain surjective $\BC$-linear map $\bar{\psi}: V \to W$ in the sense that
$$\bar{\psi}(\Pi)\subset \Gamma; \ \
\psi(v+\Pi)=\bar{\psi}(v)+\Gamma \in W/\Gamma=A \ 
\forall v+\Pi\in V/\Pi=T.$$
The surjectiveness of $\psi$ implies that the induced holomorphic line bundle $\mathcal{L}=\psi^{*}\LL(\mathbf{H}_A,\mathbf{1}_{\Gamma})$ over $T$ also has a {\sl nonzero} holomorphic section. 

Let $(H_A,\beta)$ be an A.-H. data on $(W,\Gamma)$ and $\mathcal{L}(H_A,\beta)$ the corresponding holomorphic line bundle over $A=W/\Gamma$.
Then the inverse image $\psi^*\mathcal{L}(H_A,\beta)$ is isomorphic to $\mathcal{L}(H_A\circ \bar{\psi},\beta\circ \bar{\psi})$ where the A.-H. data
$(H_A\circ \bar{\psi},\beta\circ  \bar{\psi})$ for $(V,\Gamma)$ is as follows (see \cite[Lemma 2.3.4]{Kempf}).
\begin{equation}
\label{baseAHchange}
H_A\circ \bar{\psi}: V \times V \to \BC,  \ (v_1,v_2) \mapsto H_A(\bar{\psi}v_1, \bar{\psi}v_2); \ \
\beta\circ \bar{\psi}: \Pi \to \mathbf{U}(1), \  l \mapsto \beta(\bar{\psi}(l)).
\end{equation}

In light of the nondegeneracy of $\mathbf{H}_A$, this implies that
\begin{equation}
\label{kernelAHchange}
\ker(\mathbf{H}_A\circ \psi)=\ker(\bar{\psi})\subset \ker(H_A\circ \bar{\psi})\subset V.
\end{equation}

Now let $(H_A,\beta)$ be the A.-H. data on $(W,\Gamma)$ such that $\mathcal{M}$ is isomorphic to
$\LL(H_A,\beta)$. In light of \eqref{baseAHchange}, $\mathcal{F}$ is isomorphic to
$\LL(H_A\circ \bar{\psi},\beta\circ \bar{\psi})$.  In particular,
$\mathcal{L}=\psi^{*}\LL(\mathbf{H}_A,\mathbf{1}_{\Gamma})$ is isomorphic
to $\LL(\mathbf{H}_A\circ \bar{\psi},\mathbf{1}_{\Pi})$.
(Here 
$$\mathbf{1}_{\Pi}=\mathbf{1}_{\Gamma}\circ\bar{\psi}:\Pi \to \{1\}\subset \mathbf{U}(1)$$
 is the trivial character of $\Pi$.) Since $\mathcal{L}$ admits a nonzero holomorphic section,
the inclusion \eqref{kernelAHchange} allows us to
 apply Corollary \ref{LandN}  to $\mathcal{N}=\mathcal{F}$ and $H_0=H_A\circ \bar{\psi}$,
and conclude  that $\Bim(\bar{\mathcal{F}})$ is {\sl not} Jordan.

\begin{Remark}
Let $V,\Pi,T$ and $\mathcal{F}$ be as in Theorem \ref{liftAH}. Suppose that $\mathcal{F} \cong \mathcal{L}(H,\alpha)$.
Let $\alpha^{\prime}: \Pi \to \mathbf{U}(1)$ be a map such that $(H,\alpha^{\prime})$ is also an A.H. data on $(V,\Pi)$.
Let $\mathcal{F}^{\prime}$ be a holomorphic line bundle on $T$ that is isomorphic to $\mathcal{L}(H,\alpha^{\prime})$.
Then the same arguments as in the proof of Theorem \ref{liftAH} prove that $\Bim(\overline{\mathcal{F}^{\prime}})$ is also non-Jordan
 (see Theorem 1.8 of \cite{Zar19}).

\end{Remark}

\end{proof}

\chapter{Non-trivial $\BP^1-$bundles over a non-uniruled base}\label{base}

In this chapter we consider the group $\Aut(X)$ for a  non-trivial $\BP^1-$ bundle over a  non-uniruled   compact complex connected K\"ahler  manifold 
$Y.$ Recall that there is  homomorphism $\tau:\Aut(X)\to\Aut(Y)$  and its kernel is denoted by $\Aut(X)_p.$ 
 First we classify  automorphisms  $f\in\Aut(X)_p,$  i.e. those automorphisms that 
  do not move fibers of $p.$ We get that if $\Aut(X)_p\ne\{id\}$  then either $X$ or its double cover  is a projectivization $\BP(\EE)$ of rank two vector bundle over $Y$ or its double cover, respectively.  Thus, if $Y$  is K\"ahler, so is $X$ (\cite  [Proposition 3.5]{Voisin}).
  Thus the group $\Aut(X)$ is Jordan by a  Theorem of Jin Hong   Kim (\cite{Kim}). 
  It appears that 
 if $X$ is scarce,  (i.e. it does not have many sections, see \defnref{configuration}  below),     then $\Aut_0(X) $ is commutative and $\Aut(X) $  is very Jordan.  This is, for example, the case when $Y$ is torus of algebraic dimension zero.

\section{ Automorphisms  of $\BP^1-$bundles  that preserve fibers}\label{automorphisms}

This section contains the classification of those automorphisms of a  $ \BP^1$-bundle $X$  that preserve the fibers of $p: X\to Y.$  There are three different types, each one is described in a separate subsection. 

  Let $(X,p,Y)$  be  a  $ \BP^1$-bundle  over  a compact complex   connected 
     manifold $Y,$  i.e.,   \begin{itemize}
\item  $X, Y $ are compact connected complex  manifolds  of positive dimension;

 \item  $p:X\to Y$  is a surjective holomorphic map; 
\item  $X$ is a   holomorphically locally trivial fiber bundle over $Y$ with  fiber  $\BP^1$  and 
with the  corresponding projection map $p:X\to Y.$ \end{itemize}
 Let 
 $P_y$ stand for the fiber $p^{-1}(y).$
  Let $U\subset Y$  be an open non-empty subset of $Y.$ We call a covering $U=\cup U_i, i\in I,$   by open subsets $U_i$ of  $Y$ to be  {\it fine} if  for every $i\in I$      there exists an isomorphism  $ \phi_i: V_i=p^{-1}(U_i)\to U_i\times \BP^1_{(x_i:y_i)}$ such that:

--  $(u, z_i), \ u\in U_i, z_i=\frac{x_i}{y_i}\in \ov\BC,$ are   local coordinates in  $ V_i:=p^{-1}(U_i)\subset X;$ 

-- $\mathbf{pr}\circ\phi_i=p, $
 where  $\mathbf{pr}:U_i\times \BP^1\to\BP^1$ is the natural projection
      (see {\bf Notation  and Assumptions}(14)).

 \begin{Definition}\label{ssection} 
 An $k-$section $S$ of $p$ is a  codimension 1  irreducible analytic subset $D\subset X$  such that the intersection $X\cap P_y$  is finite for every $y\in Y$ and  consists of $k$ distinct  points for a general $y\in Y.$  We call {\sl bisection}  a $2-$section that meets  {\bf every} fiber at two distinct points. 
 Obviously, {\sl usual} holomorphic section  $S$  of $p$ is a $1$-section. 
A section $S$ is defined by the set $\mathbf a=\{a_i(y)\}$ of functions $a_i:U_i\to \BP^1$ such that $p (y,a_i(y))=id, \ y\in U_i.$
 We will denote this by $S=\mathbf a.$
\end{Definition}

\begin{Lemma}\label{p00}  Let $A_1,A_2,A_3$ be 3 distinct  almost 
sections of $p$  (see \defnref{almost}).   
Assume that there is an analytic subspace $\Sigma\subset Y$ of codimension  at least  2
such that $A_k, k=1,2,3,$ are pairwise disjoint in $V=p^{-1}(U),$ where $U=Y\setminus \Sigma.$

Then there exists an isomorphism  $\Phi :X\to Y\times\BP^1$ such that $\mathbf{pr}\circ \Phi=p$  where  $\mathbf{pr}:Y\times \BP^1\to\BP^1$ is the natural projection
 (see {\bf Notation  and Assumptions}(14)).
\end{Lemma}

\begin{proof}    Indeed, let $\{U_i\}$ be a fine covering of $Y$  and let 
$$a_{ki}(u)x_i-b_{ki}(u)y_i=0, u\in U_i$$ be the eqaution of $A_k\cap  U, k=1,2,3,$ over  $U_i.$ 
 We define a  meromorphic function $F(x)$ in every $V_i$ by 
\begin{equation}\label{p001}F(x)=\frac{(a_{1i}(u)x_i-b_{1i}(u)y_i)(a_{2i}(u)b_{3i}(u)-a_{3i}(u)b_{2i}(u))}
{(a_{2i}(u)x_i-b_{2i}(u)y_i)(a_{1i}(u)b_{3i}(u)-a_{3i}(u)b_{1i}(u))}, \ u=p(x).
\end{equation}
  Then $F(x)$ is  globally everywhere   defined and meromorphic  in $V.$  Its restrictions to $A_1\cap V, A_2\cap V,A_3\cap V$ are equal to $0,\infty,1,$ respectively. 
  
     The fiber of $p$ has dimension 1, thus   $X\setminus V=p^{-1}(\Sigma)$ has codimension 2  in $X.$ Thus  the  function $F$ may be extended to a   meromorphic function on the whole $X$  by the  Levi’s continuation theorem
	(\thmref{Levi}). Thus, we have  the bimeromorphic map $\Phi:X\to Y\times\BP^1, \Phi(x)=(p(x),F(x))$  that induces   an isomorphism of $V$ onto $U\times \BP^1$  that is compatible with $p.$
 According to \lemref{3.8}, $\Phi$ is an isomorphism.
 \end{proof}

\begin{Remark}\label{threedisjoint}   In particular, if there are three disjoint sections in $X$ then $X\sim Y\times\BP^1.$ 
\end{Remark}

\begin{Remark}\label{codim2} 
 Note that a section is an almost section.  If $A$ is an almost section but not a section then the set 
 $$\Sigma(A)=\{y\in Y \ | \ p^{-1}(y)\subset A\}\subset Y$$
  has codimension at least two  because

--$\tilde \Sigma:=p^{-1}(\Sigma(A)) $ is a proper analytic subset of $A$ with $\dim(A)=\dim(Y)=n;$ thus $\dim(\tilde \Sigma)\le  n-1;$

-- every  fiber of restriction of $p$ to $\tilde \Sigma$ has dimension 1.
\end{Remark}
 \begin{Definition}\label{configuration} 
We say that three sections $S_1,S_2,S_3$ in $X$ are  good configuration if $S_1\cap S_2=S_1\cap S_3=\emptyset$ and $S_2\cap  S_3\ne \emptyset. $
We say that three almost sections $A_1,A_2,A_3$ in $X$ are a special  configuration if $A_1\cap A_2=A_1\cap A_3=A_2\cap  A_3. $
We say that $X$ is {\it scarce }   if $X$ admits no special configurations.

\end{Definition}

\begin{Lemma}\label{p0}  Let $S_1,S_2,S_3,S_4$ be 4 distinct
sections of $p$ such that 
$S_1\cap  S_2=\emptyset,$ $S_3\cap  S_4=\emptyset.$
Then $X\sim  Y\times\BP^1.$\end{Lemma}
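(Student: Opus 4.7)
The plan is to show that the three sections $S_1,S_2,S_3$ are already pairwise disjoint, which by the standard argument forces $\LL\cong\mathbf 1$. From $S_1\cap S_2=\emptyset$ and Case A of \lemref{threetypes}, one has a fine cover $\{U_i\}$ of $Y$ and coordinates $(y,z_i)$ on $V_i=p^{-1}(U_i)$ with $S_1\cap V_i=\{z_i=0\}$, $S_2\cap V_i=\{z_i=\infty\}$, and transitions $z_j=\la_{ij}z_i$ for a line bundle $\LL$ with $X=\BP(\LL\oplus\mathbf 1)$. In these coordinates $S_3$ and $S_4$ are cut out by collections $\{s_i\},\{t_i\}$ of locally meromorphic functions satisfying $s_j=\la_{ij}s_i$, $t_j=\la_{ij}t_i$, i.e., by meromorphic sections $\mathbf s,\mathbf t$ of $\LL$, neither identically $0$ nor identically $\infty$ (since $S_3,S_4\notin\{S_1,S_2\}$).

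Next I would form the fibrewise cross-ratio $\mu(y):=s_i(y)/t_i(y)$; the transition factors $\la_{ij}$ cancel, so $\mu$ is a globally defined map $Y\to\BP^1$. To check that $\mu$ is holomorphic one must exclude the indeterminate forms $0/0$ and $\infty/\infty$: but $s_i(y_0)=t_i(y_0)=0$ would give $S_3(y_0)=S_4(y_0)=S_1(y_0)$, and $s_i(y_0)=t_i(y_0)=\infty$ would give $S_3(y_0)=S_4(y_0)=S_2(y_0)$, each contradicting $S_3\cap S_4=\emptyset$. Moreover $\mu$ never attains the value $1$, since $\mu(y)=1$ with $s_i(y),t_i(y)$ finite forces $S_3(y)=S_4(y)$. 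Hence $\mu$ is a holomorphic map from the compact connected manifold $Y$ into $\BP^1\setminus\{1\}\cong\BC$ and must be constant, $\mu\equiv c$ for some $c\in\BC$. The distinctness of $S_3,S_4$ from $S_1$ excludes $c=0$ and $c=\infty$, so $c\in\BC^*$ and $\mathbf s=c\,\mathbf t$ as meromorphic sections of $\LL$.

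Finally, $\mathbf s=c\,\mathbf t$ with $c\in\BC^*$ forces $\mathbf s$ and $\mathbf t$ to share the same zero locus and the same pole locus. So $S_3$ meets $S_1$ at a point of $P_{y_0}$ only if $S_4$ does, and together with $S_3\cap S_4=\emptyset$ this rules out both possibilities, giving $S_3\cap S_1=\emptyset$; the same argument with $\infty$ replacing $0$ gives $S_3\cap S_2=\emptyset$. Thus $S_3$ provides a nowhere-vanishing holomorphic section of $\LL$, whence $\LL\cong\mathbf 1$ and
$$X\sim\BP(\mathbf 1\oplus\mathbf 1)\sim Y\times\BP^1.$$
The only point that requires real care is the verification that $\mu$ is genuinely holomorphic into $\BP^1$ rather than merely meromorphic on $Y$; once this is settled, compactness of $Y$ combined with the avoided value $1$ does the rest of the work.
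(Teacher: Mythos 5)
Your proof is correct and follows essentially the same route as the paper: both form the ratio of the fibre coordinates of $S_3$ and $S_4$ in coordinates adapted to the disjoint pair $S_1,S_2$, observe that it is a globally defined function on $Y$ omitting the value $1$, conclude it is constant, and deduce triviality (the paper phrases the last step as a contradiction with $S_3\cap S_4=\emptyset$, you argue directly that $\LL_p\cong\mathbf 1$). Your extra check that the ratio is a genuine holomorphic map to $\BP^1$ (no $0/0$ or $\infty/\infty$ points, thanks to $S_3\cap S_4=\emptyset$) is a welcome sharpening of the paper's appeal to a meromorphic function omitting a value.
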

\begin{proof} If $S_3\cap (S_1\cup  S_2)=\emptyset, $ then
$X\sim Y\times\BP^1$  (\remarkref{threedisjoint}).     Assume that $X\not \sim Y\times\BP^1.$ 
Let
$\emptyset\ne S_3\cap S_2=D\subset S_2.$   Let $\{U_i\},i\in I$ be a fine   covering of $Y.$
In every $V_i=p^{-1}(U_i)$ we choose coordinates   $(y,z_i)$   in such a way  that $S_2\cap V_i=\{z_i=0\},S_1\cap V_i=\{z_i=\infty\}.$
Then  $z_j=\lambda _{ij}z_i$ in $V_i\cap V_j,$  where $\lambda _{ij}$ are non-vanishing in $U_i\cap U_j$   holomorphic functions. 

Let  $S_3\cap  V_i=\{(y,z_i=p_i(y)), y\in U_i\},$  where $p_j=\lambda _{ij}p_i,$ and $S_4\cap  V_i=\{(y,z_i=q_i(y)), y\in U_i\},$  where $q_j=\lambda _{ij}q_i .$ Then $r(y):=\frac {p_i(y)}{q_i(y)}$ is a  globally defined  
meromorphic function on $Y$  that omits value $1$  (since $S_3\cap S_4=\emptyset $). Thus, $r:=r(y)=constant.$  But then $q_i$   vanishes at $D$ and $S_3\cap  S_4\supset D. $ Contradiction. 
\end{proof}

\begin{Remark}\label{Disjointsections}  We proved also the following fact: If  $X$ contains two disjoint sections  $S_1$ and $S_2$, then \begin{itemize}
\item there is a holomorphic line bundle $\LL:=\LL(S_1,S_2)$ such that $X\sim\BP(\LL\oplus\mathbf{1}_Y);$
\item there is a fine covering $\cup U_i, i\in I$ of $Y$    and coordinates  $(u,z_i), u\in U_i, z_i\in\ov\BC$  in $V_i, $  such that $$S_1\cap  V_i=\{z_i=\infty\}, S_2\cap  V_i=\{z_i=0\}. $$ 
 \item $z_j=a_{ij}z_i, $   and cocycle 
 $\mathbf{a}=\{a_{ij}\}$   defines $\LL.$\end{itemize}\end{Remark}

 \begin{Lemma}\label{pp00}   If there exist  3 distinct  almost 
sections  $A_1,A_2,A_3$  of $p$ then  there exist a bimeromorphic map   $\Phi :X\to Y\times\BP^1$ such that $\mathbf{pr}\circ \Phi=p.$  

\end{Lemma}

\begin{proof} 

  We maintain the  notation of the proof   of \lemref{p00}

 Let $$\Sigma(A_i)=\{y\in Y \ | p^{-1}(y)\subset A_i\}, \   i=1,2,3, \ \text{ and}  \ \Sigma=\bigcup\limits_1^3\Sigma(A_i).$$
Let $\tilde \Sigma=p^{-1}(\Sigma).$

  The  function  $F(x)$  defined by \eqqref{p001}   is  defined and meromporhic   at every point outside the set 
  $$D= (A_1\cap A_3)\cup (A_2\cap A_3)\cup(A_1\cap A_2)\cup\tilde \Sigma.$$ 
 Since codimension  of $D$ is at least 2,
  the function $F$ may be extended to a meromorphic function on $X$ by the Levi Theorem. 
 Consider a map $\Phi:  X\to Y\times \BP^1. \ x\mapsto (p(x),F(x)).$ It is  meromorphic and 
 induces an 
  isomorphism on every fiber 
 $P_u,u\not\in  p(D)$ to $\BP^1.$    Thus $\Phi$ is bimeromorphic.
 \end{proof}

\begin{Lemma}\label{p000}  If $X$ admits a good configuration  $S_1,S_2,S_3,$ then $X$ admits a special configuration. 
\end{Lemma}
\begin{proof}  By assumption  $S_1\cap  S_2=S_1\cap  S_3=\emptyset, S_3\cap S_2\ne\emptyset$. 
Recall that  $S_2$ is a zero section of the line bundle $\LL(S_1,S_2)$  (see   \remarkref{Disjointsections}). Let $\{U_i\}, i\in I$ be a fine covering of $Y $   and $(u,z_i), u\in U_i, z_i\in\ov\BC$ be  coordinates in $V_i, $  such that $S_1\cap  V_i=\{z_i=\infty\}, S_2\cap  V_i=\{z_i=0\}. $ 
Let  the non-zero section of $\LL,$ namely,   $S_3$ have  the equation  $z_i=h_i(u)$ in $V_i.$ 
For  any $c\in\BC^*$ the equations $z_i=ch_i$ will also define a section   $S_4\ne S_3$  of $\LL.$ 
By construction, $S_2\cap  S_3=S_2\cap  S_4=S_3\cap  S_4=\bigcup\limits_{i\in I}\{h_i=0\}.$  Thus, $S_2, S_3, S_4$ is a special configuration.\end{proof}

  We now consider the subgroup $\Aut(X)_p$ of those automorphisms $f$  of $X$ that do not move fibers of $p$, i.e., such that $p\circ  f=f.$
Similarly to \lemref{p00}, every $f\in\Aut(X)_p$ defines locally a   holomorphic map $\psi_f:Y\to \PSL(2,\BC)$ and the function $$\TD(y), y\to \TD(\psi_f(y))=\frac{\tr^2(\psi_f(y))}{\det(\psi_f(y))}$$  (see {\bf Notation})    is everywhere defined and holomorphic, hence constant on $Y$  (\cite[Remark 4.9]{BZ20}).    We denote this constant by $\TD(f).$

Assume that $X\not\sim  Y\times\BP^1.$ Let  $f\in\Aut(X)_p, f\ne id.$   Recall that $\Fx(f)$ is  the set of all  fixed points  of $f$.  Let $\{U_i\},i\in I$  be a fine covering of $Y.$
 We summarize  in \lemref{typea}  and \lemref{doublecover} below the properties of   {\sl non-identity} automorphisms  $f\in\Aut(X)_p$ with
 $\TD(f)\ne 4$  (\cite{BZ20}).

\begin{Lemma}\label{typea}    Assume that  $(X,p,Y) $  is a $\BP^1-$bundle and  $X\not\sim  Y\times\BP^1.$ Let  $f\in\Aut(X)_p, f\ne id,$ and  $\TD(f)\ne 4.$ 
 Then  one of two following  cases holds. 
\begin{itemize}
\item[{\bf  Case  A.}]   The  set $\Fx(f)$ consists  of exactly  two disjoint sections $S_1,S_2 $  of $p.$   We say that $f$ is 
 of type {\bf A} with data  $(S_1,S_2), $  an ordered pair. In notation of \remarkref{Disjointsections},
 let   $\{U_i\}, i\in I,   \ \LL(S_1,S_2),$   and $\mathbf a=\{a_{ij}\}$  be the corresponding  fine covering,  holomorphic line bundle  and cocycle, respectively.  

 Then \begin{itemize}\item  Defined is the number $\lambda_f\in\BC^*$ such that in every $V_i$ 
\begin{equation}\label{lambda}f(u,z_i)=(u,\lambda_f z_i);\end{equation}
\item If  $G_0\subset Aut(X)_p$   be the subgroup of all $f\in\Aut(X)_p$  such that  $f(S_1)=S_1,$  $f(S_2)=S_2,$  then  $G_0\cong \BC^*;$
\item The restriction $f\to f \mid _{P_y}$ defines a group embedding of $G_0$ into $\Aut(P_y).$\end{itemize}

\item[{\bf    Case C.}] The  set $\Fx(f)$   is  a smooth unramified double cover  $S$ of $Y.$  We will call such  $f$ an automorphism  of type {\bf C }  with data $S.$ Here  $S$ is a  bisection of $p.$ \end{itemize}\end{Lemma}
\begin{proof} $\TD(f)\ne 4$ implies that  $f$ has exactly two distinct fixed points at {\bf every   }  fiber $ P_y=p^{-1}(y), y\in Y.$  
 Thus $\Fx(f) $ is either a union  of   two disjoint  sections or is a 2-section  of $p.$  In {\bf  Case  A}    \eqqref{lambda} follows from the fact that 
$$f(u,z_i)=\lambda_iz_i,  \   f(u,z_j)=\lambda_jz_j=\lambda_ja_{ij}z_i=a_{ij}\lambda_iz_i.$$
 The constant  $\lambda_f=\lambda_i\ne 0$ does not depend on the choice of the fiber, hence   $f$ is  determined uniquely by its  restriction to every given fiber. 
On the other hand for every $\lambda\in\BC^*$  there exists an  automorphism $f_{\lambda}\in\Aut(X)_p$ defined in every $V_i$ by  $$(u,z_i)\to (u, \lambda z_i).$$
Therefore  $G_0\cong \BC^*.$  
\end{proof}

\begin{Lemma}\label{doublecover} (see \cite{BZ20})   Let $S$ be a bisection of the $\BP^1-$bundle $(X,p,Y).$

Consider $$\tilde X:=\tilde X_S:=S\times_Y  X=\{(s,x)\in S\times  X \subset X\times  X  \ \ \mid  p(s)=p(x)\}.$$ We denote the  restriction of $p$  to  $S$
by the same letter  $p,$  while   $p_X$ and $\tilde p$ stand for 
 the restrictions to $\tilde X$ of the  natural projections $ S\times  X\to X$ and $ S\times  X\to S $ respectively.  We write
$\invl:S\to S$ for  the involution (the  only non-trivial deck transformation for $ p \bigm |_S$).
Then   $(\tilde X,\tilde p,S)$   is a $\BP^1-$bundle  with the following properties:

 \begin{itemize}

\item [a)]  
The following 
 diagram commutes
\begin{equation}\label{diagram41}
\begin{CD}
 \tilde X  @>{p_X}>>X\\
@V\tilde p VV@V pVV\\
S  @>{p\mid_{S}}>>Y
\end{CD}.\end{equation}
\item [b)] $p_X:\tilde X\to X $ is an unramified double cover of $X;$
\item [c)] Every fiber  $\tilde p^{-1}(s), s\in S$ is isomorphic to $$P_{p(s)}=p^{-1}(p(s))\sim \BP^1;$$
\item [d)] The  $\BP^1$-bundle $\tilde X$ over $S$ has two disjoint sections, namely: $$S_+:=S_+(f):=\{(s, s)\in \tilde X, \  s\in S\subset X\}$$ and $$ S_-:=S_-(f):=\{(s, \invl(s))\in \tilde X,  \ s\in S\subset X\}.$$ They are mapped onto $S$ isomorphically by $p_X.$

\item [e)]  Every $h\in \Aut(X)_p$ induces an automorphism $\tilde h\in \Aut(\tilde X)_{\tilde  p}$ defined by $$\tilde h(s,x)=(s,h(x)).$$ 
 
\item [f)]  The  involution  $s\to \invl(s)$ may be extended from $S$ to a  holomorphic involution  of $\tilde X$ by  $$\invl(s,x)= (\invl(s),x);$$

\item[g)] Every section $N=\{y,\sigma(y)\}  $   of $p$ in $X$  induces the section
 $\tilde N :=\{(s,  \sigma(p(s))\}$  of  $\tilde p $  in $\tilde X.$   
We have   $p_X(\tilde N)=N$ is a section of $p,$ 
  thus  $\tilde N$ cannot coincide  $ S_+$ or $ S_-.$\end{itemize}
\end{Lemma}

\subsection{Automorphisms with   $TD=4$}\label{sectypeb}

If  $f\in\Aut(X)_p, f\ne id$ and $\TD(f)=4,$ then   there is precisely one fixed point of $f$ in the fiber $P_y=p^{-1}(y)$ 
over  the general point $y\in Y.$ That means that $\Fx(f)$ contains precisely one  almost section $D$  of $p.$   In this case we say that $f$ is of type {\bf B} 
with data $D.$

\begin{Lemma}\label{typeb2}Let $(X,p ,Y)$ be a $\BP^1-$bundle, where $X,Y$ are compact connected   complex manifolds, $\dim(Y)=n, $ $f\in\Aut(X)_p,f\ne id, $ and $\TD(f)=4.$ Let $D$  be the only   almost section  contained in  $\Fx(f).$  Let $\Sigma =\{y\in Y \ | \ {P_y }\subset D\}$ and $U=Y\setminus \Sigma, \  V=p^{-1}(U)\subset X.$  Let $\tilde S$ be the union of all irreducible   distinct from $D$  components of $\Fx(f)$  and $S=p(\tilde S).$
 
 Then\begin{enumerate}
 
\item
 there is a  fine covering   $  U_i,i\in J$ 
of $U$ and coordinates $(u, z_i)$  in $V_i=p^{-1}(U_i)$ such that $D\cap
V_i=\{z_i=\infty\}.$
 \item  $f(u,z_i)=(u, z_i+\tau_i(u)),$  where $\tau_i$ are holomorphic functions on $U_i;$
\item   if $i,j\in  J$ then $z_j=\mu_{ij}z_i+\nu_{ij}$ where $\mu_{ij}$  and $\nu_{ij}$ are holomorphic functions in $U_i\cap  U_j$ and $\mu_{ij}$   does not vanish.
Moreover,  $\mu_{ij}$  depend on $D$ and the choice of coordinates in $V_i$ but not on $f.$

\item  if $i,j\in  J$ then $\tau_j=\mu_{ij}\tau_i$ in $U_i\cap  U_j.$
 \item $S$  has pure  codimension 1  in $Y.$ \end{enumerate}\end{Lemma}

\begin{proof}   Recall that  the set $\Sigma$ has codimension at least two  in $Y.$ (\remarkref{codim2}).  

 (1) follows from the fact that   $D$ is a section of $p$ over $U.$

(2) follows from the fact that    $D\subset\Fx(f)$, thus the restriction of $f$  onto a fiber $P_y, y\in U_i$ is an automorphism of $\BP^1$ which is either identity or has   the only fixed point  $z_i=\infty.$

(3) follows from the  fact that    $z_j$  is obtained from $z_i$  by an automorphism of $\BP^1$ with $z=\infty$ fixed.

 Since $X$ admits an almost section,  $X\sim \BP(\EE)$
for some rank two holomorphic vector bundle $\EE$ on $Y$   with projection $\pi:\EE\to Y$  (\cite[Lemma 3.5]{kostya},\thmref {PS100}). 
   That means that we have a fine covering $\{U_i\}$ and  a  cocycle  $\{A_{ij}\in\GL(2,\OO(U_i\cap U_j))\}$ of two by two   transition matrices  of $\EE$
such that \begin{itemize}\item   $\pi^{-1}(U_i)\sim U_i\times \BC^2_{x_i,y_i};$
\item     if $U_i\cap U_j\ne\emptyset$   then 
$$A_{ij}\begin{bmatrix}x_i  \\   y_i\end{bmatrix}=
\begin{bmatrix}x_j\\y_j\end{bmatrix}.$$\end{itemize}
 
  Since $D\cap V$ is a section of $p$ over $U$ we may  choose  a basis in  $ \BC^2_{x_i,y_i}$ in such a way that the preimage 
of $D\cap  U_i$  in $U_i\times \BC^2_{x_i,y_i}$  is $U_i\times \{(x_i,0)\}, \ x_i\in\BC. $ For these coordinates 
\begin{itemize}\item
$$A_{ij}\begin{bmatrix}1 \\   0   \end{bmatrix}=
\begin{bmatrix}\lambda_{i,j}\\0\end{bmatrix}$$
\item   \begin{equation}\label{l100}A_{ij}=\begin{bmatrix}\lambda_{i,j}&b_{ij}\\0&\tilde\lambda_{i,j}\end{bmatrix},\end{equation}  where  
$b_{ij},\lambda_{i,j}, \tilde\lambda_{i,j}$  and 

\begin{equation}\label{lambda1}d_{ij}=\lambda_{i,j}\tilde\lambda_{i,j}=\det (A_{ij})\end{equation}
are holomorphic  functions  in $U_i\cap U_j.$\end{itemize}
Let now $z_j=\frac{x_j}{y_j}, z_i=\frac{x_i}{y_i}.$  Then  \begin{equation}\label{lambda2}z_j=\frac{\lambda_{i,j}x_i+b_{ij}y_i}{y_i\tilde\lambda_{i,j}}=\mu_{ij}z_i+\nu_{ij}.\end{equation}

Thus $\mu_{ij}=\frac{\lambda_{i,j}^2}{d_{ij}}=\frac{\lambda_{i,j}}{\tilde\lambda_{i,j}}$ depends on  the choice of $D,$ and is defined by the eigenvalue of the   basis vector in the invariant subspace 
representing $D.$ It does not depend on the choice  of   $f$ with the given  data $D.$

  Note that both $\{\lambda_{i,j}\}$ and $\{\tilde\lambda_{i,j}\}$ form cocycles for the covering of $U.$

 (4) follows 
 from the fact that $f$ is globally defined, and $D$ is fixed,  thus 
$$f(u,z_j)=(u, z_j+\tau_j(u))=(u, \mu_{ij}z_i+\nu_{ij}+\tau_j(u))=(u, \mu_{ij}(z_i+\tau_i(u))+\nu_{ij}).$$

(5) follows 
 from the fact that $\tau_i$ are holomorphic  and $S\cap  U_i=\{\tau_i=0\}.$  Indeed, let  $\tilde S_1\subset \tilde S$   be an irreducible component of $\tilde S.$  It cannot be an almost section, thus $ S_1=p(\tilde S_1)$ is a proper analytic subset of $Y.$ Moreover, since $\tilde \Sigma\subset D,$ we have: $\tilde S_1\not\subset\
\tilde \Sigma, \ S_1 \not\subset \Sigma.$  Thus, $S_1\cap U$ is a dense open subset of $S_1.$ Since $S\cap  U_i=\{\tau_i=0\}$  has  pure codimension 1
(if $S\cap  U_i\ne \emptyset$),   the same is valid for every its component that intersect $U_i.$ Thus, $\dim(S_1)=n-1.$
 
 \end{proof}

  \begin{Proposition}\label{c11}   We maintain   the notation of \lemref{typeb2}.
 Let $S_1,\dots,S_k$ be all irreducible components of $S.$   Then \begin{itemize}\item
    For every $l, 1\le  l\le k, $ defined is a non-negative  number   $n_l,$  that is the order  of zero  of $\tau_i$ along the component $S_l$ if
    $S_l\cap  U_i\ne \emptyset.$
 It depends on $l$ but not on $i.$ 
 The holomorphic   line bundle  $\LL(f)$  corresponding to 
the  effective divisor $\Delta_f:=\sum\limits_{1}^{k}n_lS_l$ 
restricts to $U$ to the holomorphic line bundle defined by the cocycle
 $\mu_{ij}.$
 
\item Let $G_D$ be the subgroup of $\Aut(X)_p$  of all those $g\in \Aut(X)_p,$ that have $\TD(g)=4$ 
and $D\subset   \Fx(g).$  Then $G_D$ is isomorphic to the additive group of $H^0(Y,\LL(f)).$   Thus $G_D\cong (\BC^+)^m, \ m>0.$ 
  \end{itemize}\end{Proposition}

\begin{proof}  
Let  $S_l$  be an irreducible component of $S$. For every $U_i$   such that  $S_l\cap  U_i\ne \emptyset$ defined is the order   $n_{li}$ of zero of $\tau_i$ along $S_l.$  In $U_i\cap  U_j$ we have $\tau_j=\tau_i\mu_{ij}.$  Since $\mu_{ij}$ does not vanish, $\tau_j$ has the same order  of zero along $S_l\cap  U_j.$   Since $S_l$ is irreducible
 and $U\cap S_l$ is open and dense in $S_l,$ the order $n_l$ is well   defined
(see, for example \cite[Remarks 2.3.6]{Huy}). 
By construction, the divisor of  $\tau_i$ in $U_i$  is $\Delta_f\cap U_i$, thus the transition, functions for $\LL(f)$ in $U_i\cap U_j$ are $\tau_j/\tau_i=\mu_{ij}.$

 Let $h\in \Aut(X)_p,$ and   $ \TD(h)=4,$ 
and $D\subset   \Fx(g).$  Applying item (3) of \lemref{typeb2}, we get  $h(u,z_i)=(u, z_i+h_i(u))$  where 
 $h_j=\mu_{ij}h_i.$   Thus the function defined in every $U_i$ by $G_h(u)=\frac{h_i}{\tau_i}$ is meromorphic in $U.$  By the Levi Theorem, $G_h(u)$ is meromorphic on $Y.$   By construction, its divisor $(G_h)\ge -\Delta_f,$ thus $G\in  H^0(Y,\LL(f)).$

 On the other hand, let $G$  be  a  meromorphic function on $Y$  with   divisor  $(G)\ge  -\Delta_f$   (i.e., $G\in    H^0(Y,\LL(f))$). 
   For every $i$ the   function $ h_i=G\tau_i$  is holomorphic in  $U_i,$ hence we  can define a holomorphic  automorphism of  every $V_i=p^{-1}(U_i)$  by 
\begin{equation}\label{l1}h(u,z_i)=(u, z_i+h_i(u)). \end{equation}
Since  $h_j:=\mu_{ij}h_i,$ the map $h$ 
  is  an automorphism  of   $V.$   Moreover,  all the     points of $D\cap V=\cup\{z_i=\infty\}$ are fixed  by $h$. 
  By \lemref{3.10} it may be extended to a bimeromorphic map of $X.$ 
 
   By \lemref{3.8},  $ h\in\Aut(X)_p.$  Moreover, $\Fx(\tilde h)$ contains the closure of $D\cap  V,$
   that is $D.$ 
In the general   fiber   $P_y$  of $p$ it has precisely one fixed point $D\cap  P_y,$ thus $\TD(h)=4.$

Thus, we get a one-to-one map $$ \phi:   G_D\to  H^0(Y,\LL(f)), \ \ h\in  G_D\mapsto G_h\in  H^0(Y,\LL(f)).$$

From item (3) of \lemref{typeb2} we get that the composition of $g,h\in\Aut(X)_p$ is defined by the cocycle  $g_i+h_i$  of corresponding cocycles, which implies that 
$$\phi(h\circ  g)=\phi(h)+\phi(g).$$
\end{proof}

 The next Lemma answers the question  when an almost section $D\subset\Fx (f)$ is the section. We used this fact in \cite{BZ20} while dealing with automorphisms of type {\bf B}.

\begin{Lemma}\label{3.12}   We maintain   the notation of \lemref{typeb2}  and \propref{c11}.  If $\Delta_f=0$  then  $D$  is a section.\end{Lemma}

\begin{proof}

 First, let us note that $\Delta_f=0$ implies that   corresponding line bundle $\LL_f$ is trivial and that $f\ne id $ in a fiber $F_y=p^{-1}(y)$ if $y\not\in \Sigma.$
 
  Since $X$ admits an almost section,  $X\sim \BP(\EE)$
for some rank two holomorphic vector bundle $\EE$ on $Y$   (\cite[Lemma 3.5]{kostya},    \thmref {PS100}).  That means that we have  a fine covering $\{U_i\}_{i\in I}$ of $Y$ and a  cocycle   $A_{ij}$ of two by two matrices  ( with holomorphic in $U_i\cap U_j$ entrees)
such that \begin{enumerate}\item   $p^{-1}(U_i)=V_i\sim  U_i\times\BP^1_{x_i:y_i}, z_i=\frac{x_i}{y_i}$ and if $U_i\cap U_j\ne\emptyset$   then 

$$A_{ij}\begin{bmatrix}x_i  \\   y_i\end{bmatrix}=
\begin{bmatrix}x_j\\y_j\end{bmatrix}$$
\item  In every $U_i$ defined is a $2\times 2$ matrix $F_i$  (representing $f$ )  with holomorphic   functions (in $u\in U_i$)  as entries and  with $\TD(F_i)=4, \det(F_i)=d_i\ne 0, $ and  such that $f(u,(x_i:y_i))=(u,(x'_i:y_i')),$ where 
$$
\begin{bmatrix}x_i'\\y_i'\end{bmatrix}=F_i\begin{bmatrix}
x_i\\y_i\end{bmatrix}$$
\item $$F_j(u)A_{ij}(u)=A_{ij}(u)F_i(u)\frac{d_j}{d_i}.$$
\end{enumerate}

Since $4d_i=\tr(F_i)^2$ is a square we may    divide $F_i$  by $\tr(F_i)/2=\sqrt{d_i}$ and assume that $d_i=1$  (we use that $(x_i:y_i)$ are homogeneous coordinates in $\BP^1_{x_i:y_i}$).

Assume that $D$ is not a section, i.e.,  $\Sigma=\{y\in Y \ | \ p^{-1}(y)\subset D\}\ne \emptyset.$

Let  a fine  covering of $Y$  consist of open sets $U_0,\dots,   U_N$  and  let $U_0,\dots,U_k$ intersect  $\Sigma$   while $U=Y\setminus \Sigma= \bigcup\limits_{k+1}^{N}U_i.$

Then for   each  $i>k$ we may assume that \begin{itemize}\item
 $$F_i=\begin{bmatrix}1&\tau_i\\0&1\end{bmatrix}=I+\tau _iV$$ 
with $I $ being   the identity matrix, $\tau_i$ holomorphic functions in $U_i,$ and $V= \begin{bmatrix}0&1\\ 0&0\end{bmatrix}$ 
(by \lemref{typeb2}(2)). 
\item   Recall  that $\LL_f \mid_{U}$ is defined  on $U$ by   cocycle  $\{\mu_{ij}\}, $ where $\mu_{ij}=\tau_j/\tau_i$ is holomorphic non-vanishing function on $U_i\cap  U_j,$ if $U_i\subset U$ and $U_j\subset U $ (by \lemref{typeb2}). 
Since $\LL_f$ is trivial,  
 we may assume that    cocycle  $\{\mu_{ij}\}$ is trivial ,   i.e.,      $\mu_{ij}=1$ and $\tau_i =1$ do not depend on $i$ for $U_i\subset U=Y\setminus \Sigma.$
Moreover from Equations  \eqref{lambda1} and \eqref{lambda2}  we get that $A_{ij}$ are triangular matrices,  and  for the    eigenvalues $\lambda_{ij},\tilde \lambda_{ij} $   of matrices $A_{ij}$ we  have 
    $\lambda_{ij}=\tilde \lambda_{ij} .$    hence,  $\det (A_{ij})=\lambda_{ij}^2.$

 Thus  if both  $i,j>k,$ we may assume that 
$$A_{ij}=\begin{bmatrix}\lambda_{ij}&  \nu_{ij}\\0&\lambda_{ij}\end{bmatrix}$$
where $\lambda_{ij},\nu_{ij}$ are holomorphic  functions in $U_i\cap  U_j.$ 

\end{itemize}

  Take a point $\mathbf {s}\in\Sigma$ and let $U_0$ be a neighborhood of $\mathbf {s}.$ 
Let   $\tilde r(\mathbf {s}) $ be the number of those neighborhoods $U_i$ with $i>k$ in our fine covering  that have 
$U_i\cap  U_0\ne\emptyset.$  Let $r=\tilde r(\mathbf {s}).$
Let 
$$U_t,\dots,U_{t+r}, t>k $$ 
those neighborhoods for which  $U_i\cap  U_0\ne\emptyset, t\le  i\le  t+r.$
 For $t\le i,j\le t+r$  we have :
 \begin{itemize}\item
   $$F_0=A_{i0}(u)F_iA_{i0}(u)^{-1}=I+ W_i=I+  A_{j0}(u)VA_{j0}(u)^{-1 }=I+ W_j,$$ 
where $ W_i= A_{i0}(u)VA_{i0}(u)^{-1 },  t\le  i\le  t+r.$  It follows that the matrix function  $W_i$ defined apriori in $U_0\cap U_i$
may be extended as a matrix function with holomorphic entries to $U_0\setminus \Sigma$ (hence to all $U_0$), and 

\begin{equation}\label{l101}
W_i=W_j.\end{equation}

\item
\begin{equation}\label{l102}A_{i0}(u)A_{j0}^{-1}(u)=A_{ij}(u) \end{equation}
whenever $U_i\cap U_j\cap U_0\ne \emptyset$.

\item   Let $$A_{i0}(u)=\begin{bmatrix}\alpha_1(u)&\beta_1(u)\\  \gamma_1(u)&\delta_1(u)\end{bmatrix},
 A_{j0}(u)=\begin{bmatrix}\alpha_2(u)&\beta_2(u)\\  \gamma_2(u)&\delta_2(u)\end{bmatrix}$$  
Then
\begin{equation}\label{m101} W_i(u)=\begin{bmatrix}-\alpha_1(u) \gamma_1(u)&\alpha_1^2(u)\\  -\gamma_1^2(u)&\alpha_1(u) \gamma_1(u)\end{bmatrix}=
 W_j(u)=\begin{bmatrix}-\alpha_2(u) \gamma_2(u)&\alpha_2^2(u)\\  -\gamma_2^2(u)&\alpha_2(u) \gamma_2(u)\end{bmatrix},\end{equation}

\begin{equation}\label{m102}A_{i0}(u)A_{j0}^{-1}(u)=\frac{1}{d_{j0}}\begin{bmatrix}\alpha_1 \delta_2-\beta_1\gamma_2
& - \alpha_1\beta_2+ \beta_1\alpha_2\\ \gamma_1 \delta_2-\delta_1\gamma_2&-\gamma_1\beta_2+\delta_1\alpha_2\end{bmatrix}=\begin{bmatrix}\lambda_{ij}&  \nu_{ij}\\0&\lambda_{ij}\end{bmatrix}.\end{equation}
\end{itemize}

  Let $\tilde U_{ij}=U_i\cap U_j\cap U_0\ne\emptyset.$ 
    From \eqqref{m101} we get that  in $\tilde U_{ij}$  we have $\al_1^2=\al_2^2$ and $\alpha_1(u) \gamma_1(u)=\alpha_2(u) \gamma_2(u).$ 
Note that these  equations are valid   in all $U_0,$   since   $W_i, W_j$ are defined there. 
    
In  $\tilde U_{ij}$ the following three  cases are possible:     $\al_1=\al_2, \gamma_1=\gamma_2,$ or $\al_1=-\al_2, \gamma_1=-\gamma_2,$  or $\al_1=\al_2=0.$

{\bf Case 1.}  $\al_1=\al_2, \gamma_1=\gamma_2$  in $\tilde U_{ij}.$
 Plugging in  this into     \eqqref{m102} we get the following:
$$\frac{1}{d_{j0}}\begin{bmatrix}\alpha_1 \delta_2-\beta_1\gamma_2
& - \alpha_1\beta_2+ \beta_1\alpha_2\\ \gamma_1 \delta_2-\delta_1\gamma_2&-\gamma_1\beta_2+\delta_1\alpha_2\end{bmatrix}=\frac{1}{d_{j0}}\begin{bmatrix}\alpha_1 \delta_2-\beta_1\gamma_1
& - \alpha_1\beta_2+ \beta_1\alpha_1\\ \gamma_1 \delta_2-\delta_1\gamma_1&-\gamma_2\beta_2+\delta_1\alpha_2\end{bmatrix}
=$$
$$\frac{1}{d_{j0}}\begin{bmatrix}d_{i0}+\alpha_1 (\delta_2-\delta_1)
& \alpha_1(\beta_1-\beta_2)\\ \gamma_1 (\delta_2-\delta_1)&d_{j0}-\alpha_2 (\delta_2-\delta_1)\end{bmatrix}=\begin{bmatrix}\lambda_{ij}&  \nu_{ij}
\\0&\lambda_{ij}\end{bmatrix}.$$

 Thus there are once more  two cases.

{\bf Case 1.1}  $\gamma_1\equiv 0 $   in   $\tilde U_{ij},$ hence $\gamma_1^2=0$ in $U_0.$
Then in all $U_0$ 
 $$ F_0=\begin{bmatrix}1&\alpha_1^2(u)\\  0&1\end{bmatrix}$$
and $\alpha_1^2(u)$ does not vanish in $U_0$  since $\codim(\Sigma)\le 2$ and $\Delta_f=0,$ i.e  $F_0(u)\ne \mathbf {I}$ if $u\not\in\Sigma.$ Thus 
  $D\cap V_0=\{y_0=0\}$ and $\Sigma\cap U_0=\emptyset.$    This contradicts to $\mathbf{s}\in\Sigma.$

{\bf Case 1.2}   $\gamma_1\not\equiv 0,   \ \delta_2\equiv\delta_1$   in  $\tilde U_{ij}.$   Then $1=\lambda_{ij}=\frac{d_{i0}}{d_{j0}}.$  Moreover
$\beta_1=\frac{\al_1\delta_1-d_{i0}}{\gamma_1}=\beta_2=\frac{\al_2\delta_2-d_{j0}}{\gamma_2}$ 
and $\nu_{ij}=0$ in $\tilde U_{ij}\cap\{\gamma_1\ne 0\}.$  Since this set is  open in $U_i\cap U_j$ we have $\nu_{ij}\equiv 0$  and 
$$A_{ij}\equiv \begin{bmatrix}1&0\\  0&1\end{bmatrix}.$$

It follows that that   there is a compatible with $p$  isomorphism $V_i\cup V_j \sim  (U_i \cup U_j)\times \BP^1_{z},$   where $z=\frac{x_i}{y_i}= \frac{x_j}{y_j}.$
 Thus we can replace $U_i,U_j$ by $U_i\cup U_j$ and obtain a new fine covering of $Y$ consisting of $N-1$ open subsets and such that $\tilde r(\mathbf {s}) =r-1.$  Since $U_0$ is connected we can repeat this process (recall that $\gamma_1=\gamma_2\not\equiv 0$ in $U_i\cup U_j$ so we will  stay in {\bf Case 1.2}) till we get a covering with $\tilde r(\mathbf {s})=1.$

 Thus, since $U_0\setminus \Sigma$ was  contained   in  $U_t\cup\dots  U_{t+r}$ we get $p^{-1}(U_0\setminus \Sigma)\sim(U_0\setminus \Sigma)\times \BP^1_{z}.$   By \lemref{3.8}  and \lemref{3.10} it extends to an isomorphism and $D$ is the  preimage of $\{z=\infty\}.$

{\bf Case  2.}  $\al_1=-\al_2, \gamma_1=-\gamma_2.$

 Plugging in  this into     \eqqref{m102} we get the following:
$$\frac{1}{d_{j0}}\begin{bmatrix}\alpha_1 \delta_2-\beta_1\gamma_2
& - \alpha_1\beta_2+ \beta_1\alpha_2\\ \gamma_1 \delta_2-\delta_1\gamma_2&-\gamma_1\beta_2+\delta_1\alpha_2\end{bmatrix}=\frac{1}{d_{j0}}\begin{bmatrix}\alpha_1 \delta_2+\beta_1\gamma_1
& - \alpha_1\beta_2- \beta_1\alpha_1\\ \gamma_1 \delta_2+\delta_1\gamma_1&\gamma_2\beta_2+\delta_1\alpha_2\end{bmatrix}
=$$
$$\frac{1}{d_{j0}}\begin{bmatrix}-d_{i0}+\alpha_1 (\delta_2+\delta_1)
&- \alpha_1(\beta_1+\beta_2)\\ \gamma_1 (\delta_2+\delta_1)&-d_{j0}-\alpha_1 (\delta_2+\delta_1)\end{bmatrix}=\begin{bmatrix}\lambda_{ij}&  \nu_{ij}
\\0&\lambda_{ij}\end{bmatrix}.$$

Similarly to {\bf  Case 1} we have 

{\bf Case 2.1}  $\gamma_1\equiv 0.$  Then 
 $$ F_0=\begin{bmatrix}1&\alpha_1^2(u)\\  0&1\end{bmatrix}$$
 and  $D$ is a section of $p$  over $U_0.$ 

{\bf Case 2.2}    $\gamma_1\not \equiv 0, \ \   \delta_2\equiv -\delta_1$ in $\tilde U_{ij}.$ Then
$-1=\lambda_{ij}=\frac{-d_{i0}}{d_{j0}}.$

Then $\beta_1=\frac{\al_1\delta_1-d_{i0}}{\gamma_1}=-\beta_2=-\frac{\al_2\delta_2-d_{j0}}{\gamma_2}$ and $\nu_{ij}=0.$
Similarly  to {\bf Case 1.2}  we get that  $p^{-1}(U_0\setminus \Sigma)\sim(U_0\setminus \Sigma)\times \BP^1_{z}$  and  $D$ is a section of $p$
over $U_0.$

{\bf Case  3.} $\al_1=\al_2=0.$    According to \eqqref{m101}
$$ F_0=  I+W_i=\begin{bmatrix}1&0\\ -\gamma_1^2(u) &1\end{bmatrix}$$
and $\gamma_1^2(u)$ does not vanish in $U_0$  since $\Delta_f=0.$  Thus   $D\cap V_0=\{z=0\}$   that contradicts to  $\mathbf{s}\in\Sigma.$

\end{proof} 
 
 \begin{Remark}   We may assume that a fine covering of $Y$ contains a finite covering of  $U$ since  $U_0\setminus \Sigma$ may be covered by two neighborhoods $U_0\cap\{\al_i\ne 0\}$ and  $U_0\cap\{\gamma_i\ne 0\}$  (see \eqqref{m101}).\end{Remark}

\begin{Lemma}\label{onlyb}   
Let $f\in \Aut(X)_p, f\ne id$  be an automorphism of type {\bf B} with data $D.$
  Assume that there exists an almost section $A$ of $p$ distinct from $D.$  Then  $X$ contains a special configuration.
\end{Lemma}
\begin{proof}  
    Since $A\ne D ,$ and $ A\not\subset \Fx(f)$, we have $A_1:=f(A)\ne D$ and $A_1\ne A.$ Similarly,  $A_2:=f(A_1)\ne D$ and $A_2\ne A_1.$   Let us show that $A_2\ne A.$ 
    
    If $A_2=A,$ then in the fiber  $P_y=p^{-1}(y)$  over the general point $y\in Y$ there is  point  $a=A\cap P_y$ such that $f(a)\ne a$ but $f(f(a))=a.$
But along the general fiber $P_y$ the map $f$ act as translation  $z\to  z+\tau$ where $\tau\ne  0.$ This map has no periodic points except $z\ne \infty.$ This contradiction shows  that $A_2\ne A.$  

  Let us show that  $A,A_1,A_2$ is a special configuration.  For a fiber $P_y$ we have the following options.\begin{itemize}\item $f\mid_{P_y}=id. $ Then $P_y\cap  A=P_y\cap  A_1=P_y\cap  A_2;$
 \item $f\mid_{P_y}$ is translation $z\to  z+\tau$ and  $P_y\cap  A\ne  P_y\cap  D.$   Then $P_y\cap  A, \ P_y\cap  A_1,  \ P_y\cap  A_2$  are pairwise disjoint sets.   \item $f\mid_{P_y}$ is translation $z\to  z+\tau$ and  $a:=P_y\cap  A= P_y\cap  D.$  Then $P_y\cap  A_1=a, P_y\cap  A_2=a.$\end{itemize}
  
 It follows that $A\cap   A_1=A\cap  A_2=A_1\cap  A_2$ and $A,A_1,A_2$ is a special configuration.\end{proof}

   \begin{Corollary}\label{onlyb22}  In the notation of \lemref{onlyb}, if    $X$ is  scarce  and $\Aut(X)_p$ contains an automorphism $f$ of type {\bf B} with data $D$  then  it contains no automorphisms of type {\bf B} with another data and no 
automorphisms of type {\bf A}.\end{Corollary}
\begin{proof}   Indeed,   the existence of such automorphisms would imply the existence of an almost section (in particular, section in case  of type {\bf A}) distinct from 
one contained in $\Fx(f).$\end{proof}

\subsection{Automorphisms of type   {\bf A}}\label{2sec}

\begin{Lemma}\label{p1}  Assume that $X\not \sim  Y\times\BP^1.$
Let $S_1,S_2$ be two sections of $p$ such that 
$S_1\cap  S_2=\emptyset.$ Let $f\in\Aut(X)_p.$  Then one of the following holds.\begin{enumerate}\item  $f(S_1)\subset S_1\cup  S_2;$\item $f(S_2)\subset S_1\cup  S_2;$
\item  $f(S_1\cup  S_2)=S_1\cup  S_2.$\end{enumerate}\end{Lemma}

\begin{proof}   Note that a fiberwise automorphism moves a section to a section.  Let $S_3=f(S_1), S_4=f(S_2).$   
Since $S_1\cap S_2=\emptyset, $  we have $S_3\cap  S_4=\emptyset.$ According to \lemref{p0} it may happen only if  the pairs ($S_3, S_4$) and ($S_1, S_2$)   share a section. This may happen only if one of the sections of the pair 
($S_3, S_4$)  coincides with either $S_1$ or $S_2.$
\end{proof}
Recall that 
 the group $G_0$ of all  those  $f\in\Aut(X)_p$ that have data $(S_1, S_2)$
 is isomorphic to $\BC^*$ (see \lemref{typea}).

 Assume that the holomorphic  line bundle $\LL(S_1,S_2)$ is defined by cocycle $\{\lambda _{ij}\}$ and $\LL(S_1,S_2)^{\otimes 2}$ has a section 
 $T\subset X$  defined by $\mathbf a:=\{a_i(y)\},$ with $  a_j(y)=\lambda _{ij}^2a_i(y)\}.$
 
 Define  $$\phi_T:X\to X,  \phi_T(y, z_i)=(y,\frac  {a_i(y)}{z_i}).$$
The fixed point set  $\Fx(\phi_T)=\{\phi_T(y,z_i)=(y,z_i)\}$  is defined  by  $T\cap V_i=\{z_i^2=a_i\}.$
If $\phi_T\in\Aut(X)_p,$   then $a_i$ do not vanish.  In this case 
$\mathbf a:=\{a_i\}$ provide a section  of  $\LL_p^{\otimes 2}$  that does not meet the zero section, thus  $\LL_p^{\otimes 2}$ is a trivial bundle and  we may define $z_i$ in such a way that $a_i=a=const\ne 0.$   We will then  write  $T=T_a$ and $\phi_a:=\phi_T.$

\begin{Proposition}\label{twosections}  Let $(X,p,Y)$ be a $\BP^1-$bundle, where $X,Y$ are compact connected complex manifolds,  and $X\not \sim Y\times\BP^1.$ 
Let  $S_1,S_2$ be two sections of $p$ such that 
$S_1\cap  S_2=\emptyset. $  Let $\LL:=\LL(S_1,S_2)$ be the corresponding holomorphic line bundle   over $Y.$  Let \begin{itemize}\item $G_1\subset \Aut(X)_p$   be the subgroup of all $f\in\Aut(X)_p$  such that  $f(S_1)=S_1;$
\item  $G_2\subset \Aut(X)_p$   be the subgroup of all $f\in\Aut(X)_p$  such that  $f(S_2)=S_2;$
\item $G \subset \Aut(X)_p$   be the subgroup of all $f\in\Aut(X)_p$  such that  $f(S_1\cup S_2)=S_1\cup  S_2;$
\item $F_1$ be the additive group of $H^0(Y,\mathcal O(\LL)).$
\item $F_2$ be the additive group of $H^0(Y,\mathcal O(\LL^{-1})).$\end{itemize}
  Then \begin{enumerate} \item  $X$ does not admit a good configuration (see   \defnref{configuration}) if and only if $F_1=F_2=\{0\};$
\item $G_1\cong \BC^*\rtimes F_1;$
\item $G_2\cong\BC^*\rtimes F_2;$
\item 
 either $G=G_0=G_1\cap  G_2\cong \BC^*$ or  $\LL_p^{\otimes 2}$ is a trivial bundle and  $G=G_0\sqcup \phi_a\cdot  G_0$    for some $a\in\BC^*.$ \end{enumerate}\end{Proposition}

\begin{proof}    Let ${\mathbf {\lambda}}=\{\lambda_{ij}\}$ be the cocycle corresponding to $\LL.$     Take  $f\in  G_1$.     Since $S_1=\{z_i=\infty\} $ is $f$-invariant, we have 
\begin{equation}\label{fa}
f(y,z)=(y,a_iz_i+b_i)\end{equation} in $V_i, $  where both $a_i$ and $b_i$ are holomorphic functions in $U_i.$  Since $f$ is globally defined,we have 
$$\lambda _{ij}(a_iz_i+b_i)=a_j\lambda _{ij}z_i+b_j.$$ 
It follows that $a_i=a_j:=a$ is constant (as  globally defined holomorphic function)  and   $b_j=\lambda _{ij}b_i,$ hence $\mathbf b:=\{b_i\} $    is  a section of $\LL.$    On the other hand, every section $\mathbf b:=\{b_i\} $ of  $\LL$   defines $f\in  G_1$ by formula \eqref{fa}.  
 Thus, $G_1$ is isomorphic to the group of matrices 
$$\begin{bmatrix}a&\mathbf b\\0&1\end{bmatrix},$$
 where   $a\in  \BC^*$ and $\mathbf b\in F_1.$ 
We also showed that if $f\in  G_1$ is defined by  $\mathbf b:=\{b_i\}\ne 0$   then $f(S_2)\ne S_2, $ and $f(S_2)\cap S_1=\emptyset.$ If  $f(S_2)\cap S_2=\emptyset, $ then $S_1, f(S_2), S_2$ would be three 
pairwise disjoint section, which contradicts to $X\not \sim Y\times\BP^1.$

Thus $S_1, f(S_2), S_2$  is a good configuration.  

 In opposite direction:  consider a good configuration $S_1,S_2,S_3$ such that $S_3\cap S_1=\emptyset, S_3\cap S_2\ne\emptyset.$ Since $S_3$ is a section of $p$ and does not meet  $S_1$ it is defined by a section
$\mathbf b:=\{b_i\}$ as $z_i=b_i(y), y\in U_i.$ Thus, $F_1\ne\{0\}.$

 The case of $G_2$  and sections that meet $S_1$ but do not meet $S_2$ may be treated in the same way, interchanging   $S_2$ with $S_1$ and $F_1$ with $F_2.$
 This proves (1-3). 

 Let us prove (4).
   If for each $f\in G$ all the points in $(S_1\cup S_2)$ are fixed then, by \lemref{typea}, $G=G_0\cong\BC^*. $    If it is not the case, take  $\phi\in G\setminus  G_0.$ Then $\phi(S_1)=S_2$ and $\phi(S_2)=S_1.$  Thus, $\phi(y, z_i)=\frac  {a_i(y)}{z_i}$ in every $V_i$ and 
  \begin{equation}\label{e2}\lambda _{ij}\frac  {a_i(y)}{z_i}=\frac  {a_j(y)}{\lambda _{ij}z_i}
\end {equation}
where $a_i(y)$ are non-vanishing holomorphic functions in $U_i.$  
Thus $\{a_i(y)\}$ define a section of $\LL^{\otimes 2}.$  
Since $a_i(y)$ never vanish, we get that  $\LL^{\otimes 2}$   is trivial.   Therefore, we may choose $z_i$ in such a way that ${ a_i}=a\in\BC^*.$   Then  $\phi=\phi_a.$

For any other $f\in G\setminus  G_0$ the composition $f\circ\phi\in G_0, $ hence $G=G_0\sqcup \phi_a \cdot  G_0.$ \end{proof}

\begin{Corollary}\label{p34}  Let $(X,p,Y)$ be a $\BP^1-$bundle, where $X,Y$ are compact connected manifolds  and  $X\not\sim Y\times \BP^1.$ Assume that $p$ admits no good configurations  but admits two disjoint sections  $S_1,S_2. $  Then  one of the following holds.
\begin{enumerate}
\item  $\Aut(X)_p\cong \BC^*;$  
\item the holomorphic    line bundle $\LL(S_1,S_2)^{\otimes 2}$ is trivial 
and 
$\Aut(X)_p=G_0\sqcup \phi_a \cdot  G_0,$ for some   $a\in \BC^*.$   Here $G_0\cong \BC^* $  and $a\in \BC^*$\end{enumerate}
The restriction map $\Aut(X)_p\to \Aut(P_y), \ f\to f \mid_{P_y}$ is a group embedding. 
\end{Corollary}
\begin{proof}  It follows from \propref{twosections} that $F_1=F_2=\{0\},$ thus $\Aut(X)_p=G.$\end{proof}

\subsection{ Automorphisms of type    {\bf C}}\label{nos} 

Let $(X,p,Y)$ be a
$\BP^1-$bundle     where $X,Y$ are complex compact connected manifolds. 
Assume that $X\not\sim  Y\times\BP^1$ and  $ f\in\Aut(X)_p, f\ne id$   has type {\bf C}.
The analytic subset $F\subset X$  of all   fixed points     of 
$f$ contains no sections, but contains a bisection $S$ that is   a smooth unramified double cover of $Y$  (see   \lemref{typea}). Further on we use the notation of \lemref{typea}    and \lemref{doublecover}.

 \begin{Lemma}\label{p7}  Assume that $\tilde X:=\tilde X_S\not\sim S\times \BP^1.$ Let $N\subset \tilde X$ be a section of $\tilde p$ distinct from   $S_+$  and $S_-. $  Then 
 $N_X:=p_X(N) $  is a section of $p$  and $ (S_+,S_-, N)$  is not a good configuration.\end{Lemma}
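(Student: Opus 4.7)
The plan is to identify sections of $\tilde{p}$ with holomorphic maps over $Y$, reduce the lemma to $\invl$-invariance of $N$, and then exploit \lemref{p0} together with the involution $\invl$ of \lemref{doublecover}.

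First, I would note that a section $N$ of $\tilde{p}$ is the graph of a unique holomorphic map $\nu_N\colon S\to X$ with $p\circ\nu_N=p|_S$, via $N=\{(s,\nu_N(s)):s\in S\}$. Under this correspondence, $S_+$ and $S_-$ are given by the inclusion $\iota_S\colon S\hookrightarrow X$ and by $\iota_S\circ\invl$, respectively. Since $N_X=\nu_N(S)$ meets the generic fiber $P_y$ of $p$ in the two points $\nu_N(s_1),\nu_N(s_2)$ where $p|_S^{-1}(y)=\{s_1,\invl s_1\}$, the image $N_X$ is a (1-)section of $p$ if and only if $\nu_N\circ\invl=\nu_N$, equivalently $N=\invl N$ as subsets of $\tilde X$.

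Second, I would suppose $N\neq\invl N$ and aim for a contradiction. Because $\invl$ swaps $S_+\leftrightarrow S_-$ and $N\leftrightarrow\invl N$, the set $\{S_+,S_-,N,\invl N\}$ consists of four pairwise distinct sections of the $\BP^1$-bundle $(\tilde X,\tilde p,S)$. I would then apply \lemref{p0} to this bundle (using the hypothesis $\tilde X\not\sim S\times\BP^1$) to each of the three partitions of these four sections into two pairs. The partition $\{(S_+,S_-),(N,\invl N)\}$ forces $N\cap\invl N\neq\emptyset$, while the other two partitions, combined with the $\invl$-induced bijections $S_+\cap N\leftrightarrow S_-\cap\invl N$ and $S_-\cap N\leftrightarrow S_+\cap\invl N$ (so each pair of $\invl$-conjugate intersections is simultaneously empty or nonempty), force $N$ to meet both $S_+$ and $S_-$.

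The main obstacle is to close the contradiction from this ``tangled'' configuration, in which the only empty pairwise intersection in $\{S_+,S_-,N,\invl N\}$ is $S_+\cap S_-$. Here I would use the projective bundle description $\tilde X\cong\BP(\tilde\LL\oplus\mathbf 1)$ together with $\invl^*\tilde\LL\cong\tilde\LL^{-1}$ (imposed by $\invl$ swapping the two summand-sections): parametrizing $N$ by a sub-line-bundle $\LL'\hookrightarrow\tilde\LL\oplus\mathbf 1$ with both projections $a\in H^0(S,\tilde\LL\otimes(\LL')^{-1})$ and $b\in H^0(S,(\LL')^{-1})$ nonzero, the intersection loci $N\cap S_\pm$ are the zero sets of $b$ and $a$, and the analogous data for $\invl N$ is the $\invl^*$-pullback pair with the roles of $a,b$ interchanged via the summand-swap. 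Comparing divisor classes in $\mathrm{Pic}(S)$ via the relation $\invl^*\tilde\LL\cong\tilde\LL^{-1}$ together with the non-degeneracy of $(a,b)$ at every point of $S$, I would argue that $N\neq\invl N$ forces either $\tilde\LL$ to be trivial (so $\tilde X\sim S\times\BP^1$) or produces a direct incompatibility, in either case contradicting the standing hypotheses.

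Once $N=\invl N$ is established, the first assertion $N_X=$ section of $p$ is immediate from Step 1. For the ``not a good configuration'' claim, $\invl$-invariance of $N$ gives $N\cap S_+=\emptyset$ iff $N\cap S_-=\emptyset$. If both were empty, then $S_+,S_-,N$ would be three pairwise disjoint sections of $\tilde p$; adjoining a fourth section $\tilde f(N)$ for any non-identity $\tilde f$ in the $\BC^*$-subgroup of $\Aut(\tilde X)_{\tilde p}$ described in \lemref{threetypes}(Case A) for $(S_+,S_-)$ (this element exists because $f$ is of type \textbf{C}, hence $\tilde f$ fixes $S_+\cup S_-$ pointwise and is non-identity), one obtains four sections with two disjoint pairs, so \lemref{p0} yields $\tilde X\sim S\times\BP^1$, a contradiction. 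Hence $N$ meets both $S_+$ and $S_-$, so among $\{(S_+,S_-),(S_+,N),(S_-,N)\}$ only the first intersection is empty, and $(S_+,S_-,N)$ is not a good configuration.
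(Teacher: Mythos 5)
Your Step 1 reduction ($N_X$ is a section of $p$ if and only if $N=\invl N$) and your use of \lemref{p0} are fine, but the proof has a genuine gap exactly at the point you flag: the ``tangled'' case $N\neq\invl N$, $N\cap\invl N\neq\emptyset$ is never actually eliminated --- the divisor-class bookkeeping in Step 2 is only announced (``I would argue that\dots''), not carried out. Worse, no such argument can exist, because the tangled configuration is realizable under all the stated hypotheses. Take $S=E=\BC/(\BZ+\tau\BZ)$, $\invl$ the translation by $\epsilon=1/2$, and let $\tilde\LL$ be the $2$-torsion line bundle with multiplier system $(1,-1)$, so $\invl^*\tilde\LL\cong\tilde\LL\cong\tilde\LL^{-1}$ and $\tilde X=\BP(\tilde\LL\oplus\mathbf 1)\not\sim E\times\BP^1$. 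The map $(w,z)\mapsto(w+1/2,1/z)$ descends to a free involution of $\tilde X$ swapping the two distinguished sections; the quotient $X=\tilde X/\invl$ is a $\BP^1$-bundle over $Y=E/\langle\epsilon\rangle$, the image of $S_+$ is a bisection $S$, $\tilde X_S$ is canonically this $\tilde X$, and $z\mapsto -z$ even descends to a type {\bf C} automorphism of $X$ with fixed locus $S$, so the whole setting of Section 7 is in force. Now let $g$ be a meromorphic section of $\tilde\LL$ with one zero and one pole; one checks that $g(w)g(w+1/2)$ must be a nonzero constant $c_0$, and the sections $N_\lambda=\{z=\lambda g(w)\}$ satisfy $\invl N_\lambda=N_{1/(\lambda c_0)}$. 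For $\lambda^2c_0\neq 1$ this is precisely your tangled case: $N\neq\invl N$, they meet each other and each meets both $S_+$ and $S_-$ (over the zero and the pole of $g$), with $\tilde\LL$ nontrivial, and no incompatibility of divisor classes arises. In particular $p_X(N_\lambda)$ is a bisection of $p$, not a section, so the identity $N=\invl N$ that your Step 2 is trying to force simply does not follow from the hypotheses.

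For comparison, the paper proves the first assertion by showing $p_X\colon N\to N_X$ is $2{:}1$: otherwise $p_X^{-1}(N_X)=N\cup N_1$ has two components, asserted to be disjoint ``since $p_X$ is unramified'', after which \lemref{p0} applies. That disjointness assertion is exactly the point you isolated (an unramified double cover can pull an irreducible set back to two components meeting over its singular locus, and $N_1=\invl N$ here), so your caution was well placed; but your proposal replaces one unproved claim by another. Note finally that the half of the lemma that is actually invoked later (\corref{p8} only needs the absence of good configurations in $\tilde X$) already follows from what you did prove: if $N=\invl N$ then $N\cap S_+=\emptyset$ iff $N\cap S_-=\emptyset$, while if $N\neq\invl N$ your two partitions force $N$ to meet both $S_+$ and $S_-$; in either case $(S_+,S_-,N)$ is not a good configuration. (Your closing detour through $\tilde f(N)$ is not needed for this: three pairwise disjoint sections are already not a good configuration by \defnref{section}.) So your argument does establish the second assertion, but the first assertion is not proved by it --- and cannot be by any refinement of Step 2.
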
 \begin{proof}
 Let us show that $p_X:N\to N_X $  is  an unramified  double cover.  Indeed, assume that it is not the case. Since $\tilde X$ is the unramified  double cover
 of $X,$   the preimage $p_X^{-1}(x)$ contains precisely two points for every $x\in N_X.$ Thus if $p_X^{-1}(N_X)\ne N,$  the preimage 
$p_X^{-1}(N_X)$  consists of two irreducible components, $N$ and $ N_1$ . Moreover, since $p_X$ is unramified, $N\cap  N_1=\emptyset. $
 It follows that there are two distinct pairs of non-intersecting sections   of $\tilde p, $ namely,  $S_+, S_-$ and $N, N_1.$   According the  \lemref{p0}, 
$\tilde X\sim S\times \BP^1,$ which gives us  a contradiction. It follows that $N$ is a double cover
of  $N_X.$   Let $s\in S, y=p(s)=p(\invl(s)).$   Then 
$$ p_X^{-1}(N_X\cap  P_y)=N\cap p_X^{-1}(P_y)=
N\cap (\tilde p^{-1}(s)\cup\tilde p^{-1}(\invl(s)))$$
contains two points  (since $N$ meets every fiber of $\tilde p$   at a single point.)

Since $N$ is double cover of $N_X$ it follows that $(N_X\cap  P_y)$ contains precisely one point. 
Therefore, $N_X$  is a section of $p.$ 

 Assume that $N$ meets $S_+$ at a point $a=(s,s)\in\tilde X, s\in S .$  Then it meets  $S_-$ at the point  $\invl(a)=(\invl(s),s)$  since $
p_X(a)=p_X(\invl(a)).$  Thus, $N$ meets both $S_+$ and $S_-$ and  the  configuration is not good. 
\end{proof}

\begin{corollary}\label{p8}   Assume that  $(X,p,Y)$ is  a $\BP^1-$bundle  that admits a non-identity automorphism
$f\in \Aut(X)_p$ of type {\bf C} with data $S.$  Assume that the corresponding double cover   $\tilde X_S\not \sim S\times \BP^1.$Then  
\begin{enumerate}\item one of the following holds:\begin{itemize}
\item  $\Aut(\tilde X)_{\tilde p}\cong \BC^*;$ 
\item$\Aut(\tilde X)_{\tilde p}=\tilde G_0\sqcup \phi_a \cdot  \tilde G_0,$  where $\tilde G_0\cong \BC^*$ and $\phi\in\Aut(\tilde X)_{\tilde p}$ interchanges $S_+ $ with  $S_-.$
\end{itemize}
\item 
The restriction map $\Aut(X)_p\to \Aut(P_y), \ f\to f \mid_{P_y}$ is a group embedding  for every $y\in Y.$
 \item  the map $h\mapsto\tilde h$ is a   group  embedding of $\Aut(X)_p$ to $\Aut(\tilde X)_{\tilde p}.$ 
\end{enumerate}
 \end{corollary}
 
 \begin{proof}  Since, by \lemref{p7}, there are no good configurations in $\tilde X_f ,$ item (1)  follows from \corref{p34}  applied to $\tilde X.$

   Take  $u\in S, \ t\in Y, t=p(u).$ If  $f\bigm |_{P_t}=id,$  then, by construction, 

- \  $\tilde f\bigm |_{P_u}=id,$ , hence 

- \  $\tilde f=id,$   (by \corref{p34}  applied to $\tilde X$), hence

- \ $\tilde f\bigm |_{P_{s}}=id$  for every $s\in S,$  hence 

 - \  $f\bigm |_{P_{y}}=id$   for $y=p(s)\in Y.$
 
 Hence $f$ is uniquely determined by its restriction
to the   fiber $ P_t= p^{-1}(t).$  This proves (2).

 On the other hand, in (2) was shown that $\tilde h=id$  implies   $f\bigm |_{P_{y}}=id$  for every $y\in Y,$ i.e. $h=id.$ Therefore $h\mapsto\tilde h$  is an embedding.  This proves (3).
  \end{proof}

\begin{Lemma}\label {Yf}  Assume that \ $ f\in \Aut(X)_p,\  f\ne id,$ and $f$ is of type {\bf C} with  Data (bisection) $S.$

(1)  If   the corresponding double cover (see case {\bf C}) $\tilde X:=\tilde X_S$  is  not isomorphic to $S\times \BP^1$ then the group  $\Aut(X)_p$ 
has exponent 2 and consists of $2$ or 
 4 elements.

 (2)
   If  $\tilde X$ is isomorphic to $S\times \BP^1$  then there  are two  disjoint  sections  $S_1,S_2\subset X$ of $p.$  
   Moreover,   if $X\not\sim  Y\times\BP^1 $ then  $\Aut(X)_p$  is  a disjoint union of its abelian complex Lie subgroup $\Gamma\cong \BC^*$ of index 2 and its coset 
   $\Gamma^{\prime}.$ The subgroup $\Gamma$ consists of  those $f\in\Aut(X)_p $ that fix $S_1$ and $S_2.$  The coset  $\Gamma^{\prime}$ consists  of  those $f\in\Aut(X )_p$ that interchange $S_1$ and $S_2.$  Moreover,  the restriction homomorphism $
\Aut(X)_p\to\Aut(P_y), \    f\to f\mid_{P_y}$ is a group  embedding for every $y\in Y.$
\end{Lemma}

\begin{proof}   We modify  the proof of  \cite[Lemma 4.7]{BZ20}.

Choose a point  $a\in S. $ Let $b=p(a)\in Y.$ It means that $a$ sits in the  two elements  set $S\cap P_b.$
  The lift $\tilde f$ of $f$ onto $\tilde X $  has type {\bf A} with Data $(S_+,S_-)\subset \tilde X,$ since points of $S$ are fixed by $f.$
   It is determined uniquely by its restriction to $P_a$  (see \propref{twosections}).  For the corresponding holomorphic line bundle $\tilde \LL:=\tilde \LL(S_ -,S_+)$  
the  section   $S_+$ is the zero section.  Let \begin{itemize}
 \item
 $\{\tilde U_j\}$ be a fine covering of $S;$
  \item  $(u, z_j)$ be  local coordinates in $\tilde V_j=\tilde p^{-1}(\tilde U_j), $   such that
$z_j \mid_ {S_+}=0,  z_j \mid_ {S_-}=\infty;$ 
   
   \item  $a\in \tilde U_i, \ \invl(a)\in \tilde U_k$ and $\tilde U_k\cap \tilde U_i=\emptyset;$ 
     \item $b=p(a)=p(\invl(a))\in Y.$
   \end{itemize}
It was shown in \cite[Lemma 4.7]{BZ20}  that 
\begin{itemize}
\item[A.]
 If we define 
the isomorphism    $\alpha:\ov\BC_{z_i}\to \ov\BC_{z_k}$    in such a way that the following diagram is commutative
\begin{equation}\label{diagram4}
\begin{CD}
 P_b  @>{(a,id)}>>a\times P_b@>{z_i}>>\ov \BC_{z_i}\\
@V id VV@V{ }VV@V \alpha VV\\
P_b  @>{ (\invl(a),id)}>>\invl(a)\times P_b@>{z_k}>>\ov \BC_{z_k}
\end{CD},\end{equation}
  then $$
z_k=\alpha(z_i)=
\frac{\nu }{z_i}$$
for some $\nu=\nu(a)\ne 0.$ 
\item[B.] Consider an automorphism $h\in\Aut(X)_p.$  Let $\tilde h$ be its pullback to 
$\Aut(\tilde X)_{\tilde p}$  defined by $\tilde  h(s,x)=(s,h(x)).$ 
Let $n_1(z_i)=\tilde h \bigm |_{\tilde  P_a}, $  which  means that $h(a,z_i)=(a,n_1(z_i)).$ 
Let $n_2(z_k)=\tilde h \bigm |_{\tilde P_ {\invl(a)}}, $  which  means that $h( \invl(a),z_k)=(a,n_2(z_k)).$ 
 Then

 \begin{equation}\label{116} \frac {\nu}{n_1(z_i)}=         \al(n_1(z_i))=n_2(\al(z_i))=n_2(\frac{\nu}{z_i}).\end{equation}
 \end{itemize}
Proof of {\it (1)}. Assume that $\tilde X\not\sim S\times\BP^1$.  
 
  According to \corref{p8}, if 
$\tilde h \in \Aut(\tilde X)_{\tilde p}$ then  either 
  $\tilde h(s,z_j)=\lambda  z_j, $ 
  or $h(s, z_j)=\frac{\lambda }{ z_j}$  in every $\tilde U_j$ of our fine covering,
where  $\lambda\in\BC^*$  does not depend on $s$  or $j.$

Fix $a\in S.$  According to item B  one of following two conditions holds.

  (a)   $n_1(z_i)=\lambda z_i, \ n_2(z_k)=\lambda z_k,$ \  $z_k=\frac{\nu(a)}{z_i}$  and from  \eqref{116} 
$$\frac {\nu(a)}{\lambda z_i}=\lambda\frac{\nu(a)}{z_i}.$$  

(b)  $n_1(z_i)=\frac{\lambda}{ z_i}, \ n_2(z_k)=\frac{\lambda}{ z_k},$ $z_k=\frac{\nu}{z_i}$ and   from  \eqref{116} 
$$\frac {\nu z_i}{\lambda}=\frac {\lambda z_i}{\nu}.$$
In the former case $\lambda=\pm 1, $    in the latter  case $\lambda=\pm \nu.$ Hence, at most 4 maps are possible. Clearly,  the 
 squares of all these maps are the identity map.  
 
  Note, that all   the calculations are done for the  fiber of $\tilde p$  over the point $a.$
 We use the  fact that the map $\tilde h$  is defined by its restriction to a fiber. 
   Apriori, $\nu$  could depend on a fiber. But since $\lambda $  does not, we got as a byproduct that  the same  is valid  for $\nu.$
 
Proof of  {\it  (2)}.  Assume that $\tilde X\sim S\times\BP^1$.  Let $\zeta:S\times\BP^1\to \BP^1$
 be the projection on the second factor, let $\zeta_1=\zeta\mid_{S_+}, \zeta_2=\zeta\mid_{S_-}.$
Since $S_+\cap  S_-=\emptyset,$  the function $z=\frac{\zeta-\zeta_1}{\zeta-\zeta_2}$ is well defined  on $\tilde X$.

Since  $z=0$ on $ S_+=\{(s,s)\}$ and $z=\infty$ on $S_-=\{(s,\invl(s))\}$   we may assume that  $z_j=z $ for all $j.$   Recall that  for every $s$ 
\begin{equation}\label{nu}
  \invl(s,z)=(\invl(s),\alpha(z))=(\invl(s),\frac{\nu(s)}{z}).\end{equation}
   This implies that $\nu(s)$ is a holomorphic function on $S$, hence 
$\nu=const$.  
 From  \eqref{nu} we get that two disjoint  sections $N_1=\{(s, z=\sqrt{\nu})\}$ and  $N_2=\{ (s,z=-\sqrt{\nu})\}$ (for some choice of $\sqrt{\nu}$) 
  are invariant  under the involution, which means that their images are two disjoint  sections  $S_1, S_2,$  respectively, in $X.$    

 Thus, $X$ has two disjoint sections. Let us show that there is no good configuration  in $X$. Assume that $S_3$ is a third section  (of $p$) in $X$. On  $\tilde S_3=p_X^{-1}(S_3)\subset \tilde X$ the function $z$ is either constant or get all values in  $\ov{\BC}.$ If it is constant, then $X$  has three disjoint sections($S_1, S_2, S_3$), thus $X=Y\times \BP^1.$  If $z$ takes on all the values  on   $\tilde S_3 , $ then $S_3$ meets both $S_1$ and $S_2, $   thus $S_1, S_2 ,S_3$
 is not a good configuration. 
 
 Now {\it(2)} follows from \corref{p34}.

\end{proof}

We have proved (see \lemref{typeb2})  that if 
$X\not\sim Y\times\BP^1$ and 
 there is $f\in\Aut(X)_p, f\ne id,$  of type {\bf B}  then $\Aut(X)_p$ contains a subgroup isomorphic to $(\BC^+)^n$  for some positive integer $n.$

  \begin{Corollary}\label{p5} Assume that $X\not\sim Y\times\BP^1$ and $\Aut(X)_p, $ contains an automorphism $f\ne id$ of type {\bf B}.  Then $\Aut(X)_p $  
contains no  automorphisms  of type {\bf C}.\end{Corollary}
\begin{proof}   Assume that $\Aut(X)_p,$   contains an   automorphism of type {\bf C}. Then
by \lemref{Yf}  $\Aut(X)_p$   is either   finite or  consists of two cosets isomorphic to $\BC^*$;  in both cases  $\Aut(X)_p$   does not
contain a Lie subgroup $\Gamma\cong  (\BC^+)^n$ with 
$n>0.$ \end{proof}

\begin{Proposition}\label{kahler} Let  $(X,p,Y) $  be a  $\BP^1-$bundle, where $X,Y$ are complex compact connected manifolds, and    $Y$ is   K\"{a}hler and not uniruled.   Then $\Aut(X)$ is Jordan. \end{Proposition}
\begin{proof} Indeed, we proved  that three cases are possible. 
\begin{enumerate}\item    \ $\Aut(X)_p=\{id\}.$ Then $\Aut(X)$ embeds into $\Aut(Y)$ that is Jordan according to \cite{Kim}.
\item    \ $\Aut(X)_p$ contains an automorphisms of type {\bf A} or  {\bf B}. Then $X=\BP(\EE)$ for  some rank 2 vector bundle $\EE$ on $Y.$
Thus, $X$ is K\"{a}hler  (\cite  [Proposition 3.5]{Voisin}).
\item    \ $\Aut(X)_p$ contains an automorphisms of type {\bf C}. Then the  double cover $\tilde X$  of $X$  fits into Case 2. Thus, $X$ is K\"{a}hler.
\end{enumerate}
 In Cases 2 and 3  $\Aut(X)$ is Jordan, once more, according to \cite{Kim}.
\end{proof}

  \section {Structure of $\Aut_0(X)$  and $\Aut(X)$ }\label{Aut0}

In this section we prove the main result of this chapter.  Namely, that the group $\Aut(X)$ is very Jordan provided that the $\BP^1-$bundle $(X,p, Y)$  is scarce.

\begin{theorem}\label{main}  Let  $(X,p,Y) $  be a  $\BP^1-$bundle, where $X,Y$ are complex compact connected manifolds,  $X$ is not biholomorphic  to the direct product    $Y\times \BP^1$ and    $Y$ is   K\"{a}hler and not uniruled.   Assume that $(X,p,Y) $  is scarce.

 Then:
 \begin{itemize}
 
\item [a)]  The connected  identity component  $\Aut_0(X)$ of  the complex Lie  group $\Aut(X)$   is {\bf commutative}; 
\item [b)] The group $\Aut(X)$ is very Jordan. More precisely, there is a short exact sequence  
 \begin{equation}\label{goal111}     1\to \Aut_0(X)\to \Aut(X)\to F\to 1, \end{equation}
where  $F$ is a bounded   group.
\item[c)] The commutative group $\Aut_0(X)$ sits in a short exact sequence of complex Lie groups
\begin{equation}\label{goal3}
1\to\Gamma\to \Aut_0(X)\to H\to 1,\end{equation}
where $H$ is a complex  torus and  one of the following  conditions holds:  
 \begin{itemize}\item $\Gamma=\{id\}$, the trivial group;
 \item $\Gamma\cong(\BC^{+})^n.$ 
\item $\Gamma\cong\BC^{*}.$
\end{itemize}

\end{itemize}\end{theorem}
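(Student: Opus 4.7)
The plan is to follow the outline of the proof of \thmref{IntroMain1} in \cite{BZ20}, replacing the poorness of $Y$ by a direct analysis of $\Aut(X)_p$ made possible by the hypothesis on sections. First I would invoke \corref{tauh} to produce the holomorphic homomorphism $\tau:\Aut(X)\to\Aut(Y)$ with kernel $\Aut(X)_p$. Since $Y$ is K\"{a}hler and non-uniruled, Fact 2 identifies $\Aut_0(Y)$ with the complex torus $\Tor(Y)$, and Fact 4 turns $\tau|_{\Aut_0(X)}$ into a holomorphic homomorphism onto a connected complex Lie subgroup $H\subseteq \Tor(Y)$, which is necessarily a subtorus. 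Setting $\Gamma:=\Aut_0(X)\cap \Aut(X)_p=\ker(\tau|_{\Aut_0(X)})$ produces the exact sequence
\[
1\to\Gamma\to\Aut_0(X)\to H\to 1,
\]
the candidate for \eqref{goal3}.

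Next I would determine $\Gamma$ via the trichotomy of \lemref{threetypes}. If $\Aut(X)_p$ contains an element of type B, then \lemref{cn} embeds $(\BC^{+})^n$ into $\Aut(X)_p$ for some $n>0$; this subgroup is connected, hence lies in $\Gamma$, and \corref{p5} forbids any type C element. If $\Aut(X)_p$ contains an element of type A, then \propref{twosections} supplies the connected subgroup $G_0\cong\BC^{*}\subset \Gamma$. If only type C elements occur, \lemref{Yf} forces $\Aut(X)_p$ to be finite unless the double cover $\tilde X_S$ is isomorphic to $S\times\BP^1$, in which case two disjoint sections appear in $X$ and one is reduced to type A. In each of scenarios (1)--(3), combining these observations with \lemref{p44} (for Case (2) with many sections) and \corref{p34} (for Case (3)) forces $\Gamma$ to be one of $\{id\}$, $(\BC^{+})^n$, or $\BC^{*}$; the trivial case absorbs every situation in which $\Gamma$ is finite, since a connected finite extension of a complex torus is again a complex torus, so one re-expresses the sequence with $H=\Aut_0(X)$ and trivial kernel. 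Applying \lemref{commutative} then yields commutativity of $\Aut_0(X)$, establishing (a) and (c).

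For the very Jordan property (b), I would take a finite subgroup $\Phi\subset F:=\Aut(X)/\Aut_0(X)$ and compose with the induced homomorphism $\bar\tau:F\to\Aut(Y)/\Aut_0(Y)$. By Fact 3 the target is bounded, so $|\bar\tau(\Phi)|$ is controlled by a universal constant. The kernel $\tau^{-1}(\Aut_0(Y))/\Aut_0(X)$ of $\bar\tau$ fits into an exact sequence
\[
1\to\Aut(X)_p/\Gamma\to \tau^{-1}(\Aut_0(Y))/\Aut_0(X)\to \tilde H/H\to 1,
\]
where $\tilde H:=\tau(\Aut(X))\cap\Aut_0(Y)$. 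The first factor is finite in every case by the previous step, so the task reduces to bounding the finite subgroups of $\tilde H/H$.

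The main obstacle is precisely the control of $\tilde H/H$. In the poor setting of \cite{BZ20} one has $H=\Tor(Y)$ and this issue disappears, but here $H$ may be a proper subtorus of $\Tor(Y)$, and $\Tor(Y)/H$ on its own possesses arbitrarily large finite subgroups. The plan is to exploit that every element of $\tilde H/H$ lifts to $\Aut(X)$ modulo $\Aut(X)_p$, so the obstruction to lifting a torsion translation of $Y$ to $\Aut_0(X)$ is controlled by the already-determined structure of $\Aut(X)_p$. An alternative tactic, cleaner in Cases (2) and (3) where $X=\BP(\EE)$ for a rank two vector bundle $\EE$ over the K\"{a}hler base $Y$ and is therefore itself K\"{a}hler, is to apply Fact 3 directly to $X$; in Case (1) one would descend from the unramified double cover $\tilde X_S$, to which the K\"{a}hler argument applies.
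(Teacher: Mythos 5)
Your treatment of parts a) and c) follows the same route as the paper: the exact sequence obtained by restricting $\tau$ (from \corref{tauh} and Facts 2 and 4 of \secref{recall}) to $\Aut_0(X)$ with kernel $\Gamma=\Aut(X)_p\cap\Aut_0(X)$, the determination of $\Aut(X)_p$ through the type A/B/C trichotomy of \lemref{threetypes} together with \lemref{Yf}, \lemref{p44}, \lemref{cn}, \corref{p5}, \propref{twosections} and \corref{p34}, and finally \lemref{commutative}; your explicit remark that a finite kernel forces $\Aut_0(X)$ to be a compact connected complex Lie group, hence a torus, is a point the paper leaves implicit, and is fine.

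The genuine gap is in part b). Your primary route --- pushing a finite subgroup of $F=\Aut(X)/\Aut_0(X)$ into $\Aut(Y)/\Aut_0(Y)$ and then trying to bound the kernel --- stalls exactly where you say it does: when $H=\tau(\Aut_0(X))$ is a proper subtorus of $\Tor(Y)$ the quotient $\tilde H/H$ has unbounded torsion, and you offer only a ``plan'' for controlling which torsion translations of $Y$ lift, not an argument. The paper does not take this route at all: it bounds $\Aut(X)/\Aut_0(X)$ directly by applying \cite[Proposition 2.1]{BZ20} (Fact 3 of \secref{recall}) to $X$ itself. What you are missing is that this is legitimate in all three cases, because a holomorphic $\BP^1$-bundle over a compact K\"{a}hler manifold is itself K\"{a}hler (by Blanchard's criterion, since the relative anticanonical class restricts to a nonzero degree-two class on every fiber); no representation $X=\BP(\EE)$ and no section is needed for this. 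Your fallback for Case (1) --- descending from the double cover $\tilde X_S$ --- does not work: $\tilde X_S$ is defined only when a smooth unramified bisection $S$ is available (in the paper it arises as the fixed locus of a type C element of $\Aut(X)_p$), and such an $S$ need not exist when $p$ has no sections; moreover, even when it exists, automorphisms of $X$ outside $\Aut(X)_p$ need not preserve $S$ and hence need not lift to $\tilde X_S$, so boundedness of $\Aut(X)/\Aut_0(X)$ cannot be read off from the cover. As written, therefore, your proof of b) is incomplete; replacing it by the direct application of Fact 3 to the K\"{a}hler manifold $X$ closes the gap and is exactly the paper's argument.
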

\begin{proof}  
We know that the set of almost sections  is either infinite or contains at most 2 of them   (by  \lemref{pp00} and \remarkref{imagesection}). 

Consider cases. 

{\bf Case 1.} There are no almost sections of $p.$  Then, by \lemref{Yf}, $\Aut(X)_p$ is finite.

{\bf Case 2.}  $p$ has only two almost sections,  $A_1,A_2,$  that  meet. 

Assume that $f\in\Aut(X)_p, \  f\ne id.$   Since $f$ moves almost sections to almost sections,  $A_1\cup  A_2$ is invariant under $f.$    According to \propref{twosections},  the following cases are possible: \begin{itemize} 
\item   Points of $A_1$ are fixed points of $f.$ Then the same is true for $A_2.$   Since $A_1$ and $A_2$ meet,    $f$ is   neither   of type {\bf A} or  of  type {\bf C}. Since they are distinct, $f$ cannot be of  type {\bf B} (see \lemref{onlyb}).   Thus  $f= id$  and    $\Aut(X)_p=\{id\}.$

\item  Not all  points of $A_1$ are fixed points of $f.$  That means 
$f(A_1)=A_2, f(A_2)=A_1.$  Assume that $g\ne f\in \Aut(X)_p,   g\ne id.$  Since $g\ne id,$  it too does not fix points of $A_1$(due to the previous case).
Then for $h:=g\circ  f$ we have $h(A_1)=A_1, h(A_2)=A_2.$   Hence, as in previous item,  $h=id.$ It follows that  $f^2=id,$ $g=f =f^{-1}.$ \end{itemize}
$\Aut(X)_p$  is finite.

{\bf Case 3.}   $p$ has precisely one  almost section.  Then there are no automorphisms of type {\bf  A},
since there are no two disjoint sections.  If  $\Aut(X)_p$  contains no automorphisms  of type {\bf  B} then, by \lemref{Yf},  $\Aut(X)_p$ is finite. 
If  $\Aut(X)_p$  contains an automorphism  of type {\bf  B}, then, thanks to 
 \corref{p5},  $\Aut(X)_p$ contains no automorphisms of type {\bf  C}.  Since all automorphisms  of type {\bf  B}  have to share this section in their sets of fixed points,  $\Aut(X)_p\cong(\BC^+)^n$   
by \propref{c11}  (unless $\Aut(X)_p=\{id\}$).

{\bf Case 4.}  $p$ admits precisely  two almost sections $S_1,S _2$ and they   do not meet. Than they are sections.  But $X$ admits no good configuration. Thus, by 
\propref{twosections}  group   $\Aut(X)_p$  contains a subgroup isomorphic to $\BC^*$   of index at most 2.

{\bf Case 5.} $X$   is scarce  and all almost sections pairwise meet  (in particular, all sections pairwise meet).  Then $\Aut(X)_p$ contains no automorphism of type   {\bf A}. If $\Aut(X)_p$ contains an automorphism of type   {\bf B} then, by \lemref{onlyb} the set of sections cannot be scarce (assuming that there more than 1 of them), contradicition.  Hence, by \lemref{Yf}, $\Aut(X)_p$  is finite.

{\bf Case 6.} $X$   is  scarce  and admits  two  disjoint sections $S_1,S _2.$  By \lemref{p000}, $X$ admits no good configurations, and by \lemref{onlyb}   no automorphisms  of type {\bf  B} . By  \corref {p34}   $\Aut(X)_p $ contains a subgroup isomorphic to $\BC^*$   of index at most 2 .

The proof now 
repeats 
the proof of \cite[Theorem 5.4]{BZ20} with only one modification: $\BC^+$ should be changed to $(\BC^+)^n $ and,accordingly \lemref{commutative}   
 should be applied. The   group  $\Aut(X)_p$ may be included into the short exact sequence
\begin{equation}\label{short1}    1\to (\Aut(X)_p\cap\Aut_0(X))\to \Aut_0(X) \overset{\tau}{\to}   H_0\to 1,\end{equation} 
  where $H_0=\tau(\Aut_0(X))\subset \Tor(Y)$ is a torus  (see \remarkref{tau-meromorphic}). According to Cases {\bf 1-6}, one of the following  holds:
\begin{itemize} 
 \item  $\Aut(X)_p\cap\Aut_0(X)$ is finite (thus $\Aut_0(X)$ is a complex torus);
\item   $\Aut(X)_p\cap\Aut_0(X)\cong (\BC^+)^n$ ;
\item  $\Aut(X)_p\cap\Aut_0(X)\cong (\BC^*)$ ;
 \end{itemize}

Thus, due to  \lemref{commutative}, the group $\Aut_0(X) $ is commutative.  Now the theorem follows from the fact that $\Aut(X)/ \Aut_0(X) $ is bounded (see \propref{abounded}). 
   \end{proof}

\section{ Rational bundles over poor manifolds}\label{tori}

In this section we consider rational bundles over poor manifolds.  We prove that if $Y$ is poor then $p$ is scarce and  the results of the previous section   may be applied.

 \begin{Definition}\label{poor} We say that a compact connected complex  manifold  $Y$ of positive dimension is {\sl  poor} if it enjoys the following properties.
\begin{itemize}

\item
$Y$ does not contain analytic subspaces of codimension 1   ({\it a fortiori},  the algebraic dimension $a(Y)$ of $Y$ is $0$).
\item
$Y$  does not contain  rational curves, i.e., it is meromorphically hyperbolic in the sense of Fujiki \cite{Fu80}.
\end{itemize}\end{Definition}

 A  complex torus  $T$ with $\dim(T)\ge 2 $ and  $a(T)=0$ is a { poor}  K\"{a}hler manifold.  Indeed, a complex torus  $T$  is a   K\"{a}hler   manifold that does not contain rational curves.
 If $a(T)=0, $ it contains no analytic subsets of codimension 1 \cite[Corollary 6.4,   Chapter 2]{BL}.  An explicit example of such a torus   of dimension 2 is given in \cite[ Example 7.4] {BL}.   Explicit examples of poor tori of any dimension are presented   \cite{BZ22}.
  Another 
  example of a  {poor} manifold is   provided by  a non-algebraic $K3$ surface  $S$ with the N\'{e}ron-Severi group $\NS(S)={0}$   (see \cite[Proposition  3.6, Chapter VIII]{BHPV}).

Further on  $Y$ is  assumed to be a compact connected complex       manifold.

\begin{Proposition}\label{p1bundle}(\cite[Proposition 3.6]{BZ20}).  Let $(X,p,Y)$ be an equidimensional rational   bundle. Assume that $Y$ contains no
analytic subsets  of codimension   1.    Then $(X,p,Y)$  is a $\BP^1-$bundle.\end{Proposition}

\begin{proof}  Let   $\dim  (Y)=n, $  and $$S=\{x\in X   \ | \  \rk (dp)(x)<n\}$$  be the set of all points in $X$
where the differential $dp$  of $p$ does not have the maximal rank. Then $S$ and $\tilde S=p(S)$ are analytic subsets of $X$
and  $ Y,$ respectively  (see, for instance,
\cite [Theorem 2, Chapter  VII]{Narasimhan},
 \cite[Theorem  1.22]{PR}, \cite{Re}).  Moreover, $\codim(\tilde S)=1 $ (\cite{Ra}).  Since 
$Y$ contains no
analytic subsets of codimension  \ 1, we obtain: $\tilde S=\emptyset.$  Thus the holomorphic map $p$ has no singular fibers. \end{proof}

\begin{Lemma}\label{section}  Let $(X,p,Y)$ be a $\BP^1-$bundle, and $ \dim (Y)=n. $
For an almost section $A$ we denote $\Sigma(A)=\{y\in Y \ | \ p^{-1}(y)\subset  A\}.$
 If  $Y$ contains no analytic subsets of codimension 1, then
 \begin{enumerate}
\item a $n-$section has no ramification  points (i.e the intersection $X\cap P_y$  consists in $n$ distinct  point for every $y\in Y$);

\item  if  $A_1,A_2$ are two almost sections        then $p(A_1\cap  A_2)\subset \Sigma(A_1)\cap\Sigma(A_2).$
\item
any two distinct   sections  of $p$ in $X$ are disjoint;
\item if there is an alomost section $A\subset X$ that is not a section then $X$ contain neither sections  nor $n-$sections;
\end{enumerate} \end{Lemma}
\begin{proof}

(1) Let $R$ be an $n-$section   of $p,$ let $A$ be the set of all points  $x\in R$  where the restriction $p \bigm  |_R:R\to Y$ of $p$ onto $R$ is not locally biholomorphic.  Then 
the image   $p(A)$  is either empty or  has pure codimension 1  in $Y$     (\cite[Section 1,  9]{DG}, \cite[Theorem1.6]{Pe}, \cite{Re}). 
Since $Y$  carries no  analytic subsets of codimension 1,  $p(A) =\emptyset.$ Hence, $A=\emptyset.$

(2)     Let $B$ be an irreducible component of $A_1\cap A_2.$ Since $\dim (B)= n-1$, and $\dim (p(B))\le n-2, $ we have $p^{-1}(p(b))\subset B$ for every point $b\in B.$
 Thus, $p(b)\in\Sigma(A_1)\cap\Sigma(A_2).$

(3)  In particular, if $A_1, A_2$ are distinct sections, then $\Sigma(A_1)=\Sigma(A_2)=\emptyset$ and $A:=A_1\cap  A_2=\emptyset.$

(4)  Since $A$ is not a section, there is a point $y\in Y$ such that $P_y=p^{-1}(y)\subset A.$ Thus for any $n-$section   $S$ we have   $S\cap A\ne\emptyset.$
  This contradicts  item (2),  since $\Sigma(S)=\emptyset.$     Hence, such an $S$ does not exist. 
\end{proof}

\begin{Corollary}\label{poorsection}  Let $(X,p,Y)$ be a $\BP^1-$bundle, $ \dim (Y)=n. $
 If  $Y$ contains no analytic subsets of codimension 1, then  one of the following holds.\begin{enumerate}
\item  $X\sim Y\times \BP^1;$
\item $X$ admits two  disjoint sections,    $ \Aut(X)_p $ contains a subgroup $G\cong \BC^* $  of index at most 2;
\item $X$ admits two  meeting  almost sections,  $ \Aut(X)_p$ is finite. 
\item $X$ admits precisely one   almost section   $D, $   then  $ \Aut(X)_p \cong \BC^+ $ (and $D$, by \lemref{3.12}, is a section) or $ \Aut(X)_p =\{id\}; $
\item $X$ admits no almost sections,   $ \Aut(X)_p  $  is finite;
\end{enumerate}\end{Corollary}
\begin{proof}

First, note that since $Y$ does not admit meromorphic functions, for  a
line bundle $\LL$ on $Y$  either $H^0(\LL)=\{0\}$ or $\LL$ is trivial and  $H^0(\LL)\cong \BC.$

Item (1):  Assume that $X$ admits $m\ge 3$ almost sections. By \lemref{section} they are disjoint over an open set $U\subset Y$ that has complement of codimension 2.    Thus  $X\sim Y\times \BP^1$ by \lemref{p00}. 

Item (2) follows from \corref{p34}.

Item (3) is proven in {\bf Case 3} of the proof of \thmref{main}.

Item (4)   follows   from   \propref{c11}: if  $ \Aut(X)_p \ne \{id\} $  then  $ \Aut(X)_p $ is isomorphic to the  additive group of $\BC^m. $    That means that for corresponding line bundle
$0<m=H^0(\LL).$ Hence, $m=1.$

Item   (5)  follows from \lemref{Yf}.
\end{proof}

\begin{Lemma}\label{bimaut}    Let $(X,p,Y)$ be a $\BP^1-$bundle, $ \dim (Y)=n. $
 If  $Y$ is poor then $\Bim(X)=\Aut(X).$\end{Lemma}
\begin{proof}
  Since $Y$ contains no rational curves, it is not uniruled. 
  According to \corref{tauh},  every map $f\in\Bim(X)$ is $p-$fiberwise, i.e. there exists a group homomorphism $\tilde\tau:\Bim(X)\to \Bim(Y)$ 
(see \lemref{tau})  such that for all $f\in\Bim(X)$
$$p\circ f=\tilde\tau(f)\circ p.$$ Since $Y$ contains no rational curves, every meromophic map into $Y$ is holomorphic  (\cite{Fu80}, see  \remarkref{Fujiki}).
   Thus $\tilde\tau(f)\in\Aut(Y).$
   
    For $f\in \Bim(X)$ let $\tilde S_f$ be the  indeterminacy locus of $f$  that is an analytic subspace of $X$ of codimension at least 2 (\cite[page 369]{Re}). 
  Let   $S_f={p(\tilde S_f)},$ which is an analytic subset of $Y$ (\cite{Re}, \cite [Theorem 2, Chapter  VII]{Narasimhan},).
Since $Y$ 
contains  no analytic subsets of codimension 1, $\codim  S_f\ge 2.$  Moreover, $f$ is defined at   all points   of  $X\setminus p^{-1}(S_f).$ 
 By \lemref{3.8} both  $f\in \Bim(X)$  and  $f^{-1}\in \Bim(X)$ may be holomorphically extended to  $X,$   hence we get 
$ \Bim(X)=\Aut(X).$\end{proof}

  We summarize the result in the  following

\begin{Theorem}\label{IntroMain1}  Let  $(X,p,Y) $  be an equidimensional rational bundle over a  poor   K\"ahler  manifold 
 $Y.$    Then:
 \begin{itemize}\item   $(X,p,Y) $ is a $\BP^1-$bundle
 (see  \propref{p1bundle}); \item $\Bim(X)=\Aut(X)$  (see  \lemref{bimaut});

\end{itemize}
Assume additionally that   $Y$ is K\"ahler   and $X$ is not isomorphic to the direct product    $Y\times \BP^1.$ Then:
\begin{itemize}
\item $X$ admits at  most two almost sections (\corref{poorsection}).
\item   The connected  identity component  $\Aut_0(X)$ of  complex Lie  group $\Aut(X)$   is {\bf commutative}  (\thmref{main}); 
\item Group $\Aut(X)$ is very Jordan  (\thmref{main}); 

\item The commutative group $\Aut_0(X)$ sits in a short exact sequence of complex Lie groups\begin{equation}\label{goal2}
1\to\Gamma\to \Aut_0(X)\to H\to 1,\end{equation}
where $H$ is a complex  torus and  one of the following  conditions holds  (\corref{poorsection})):
 \begin{itemize}\item $\Gamma=\{id\}$, the trivial group;
 \item $\Gamma\cong\BC^{+},$  the additive group of complex numbers;
\item $\Gamma\cong\BC^{*},$   the multiplicative  group of complex numbers.  \end{itemize}\end{itemize}
\end{Theorem}

\


\begin{thebibliography}{MM}
\bibitem {Akhiezer}  D. N. Akhiezer, {\it Lie group actions in complex analysis}. Aspects of  Mathematics, E27. Friedr.
Vieweg \& Sohn, Braunschweig, 1995.

\bibitem{AS69}  A. Andreotti, W.   Stoll, {\it    Extension of holomorphic maps}, Annals of  Math., {\bf 72},  (1969), pp.  312--349.

\bibitem{AS71}  A. Andreotti, W.   Stoll, {\it    Analytic and algebraic dependence  of meromorphic functions}, Lecture Notes in Math., {\bf 234},  Springer-Verlag, 1971.





 
 \bibitem{BZ15} T. Bandman, Yu.G. Zarhin, {\sl Jordan groups and algebraic surfaces}. 	Transformation Groups {\bf 20} (2015), no. 2, 327--334. 

\bibitem{BZ17} T. Bandman, Yu.G. Zarhin,   {\it Jordan groups, conic bundles and abelian varieties}.  Algebr. Geom. 4 (2017), no. 2, 229–246.

\bibitem{BZ18} T. Bandman, Yu.G. Zarhin, {\sl  Jordan properties of automorphism  groups of certain open algebraic varieties}. Transform. Groups  {\bf  24} (2019), no. 3, 721–739. 
 
 \bibitem{BZ20}  T. Bandman, Yu.G. Zarhin, {\sl  Bimeromorphic automorphism groups of certain  $\BP^1-$bundles}.  European Journal of Mathematics {\bf 7} (2021), 641--670.

\bibitem{BZ22} T. Bandman, Yu.G. Zarhin, {\sl  Simple tori of algebraic dimension 0},  
Trudy Mat. Inst. Steklova {\bf 320} (2023)	to appear;	arXiv:2106.10308 [math.AG].

 \bibitem{BHPV} W.P. Barth, K. Hulek, C.A.M. Peters,
A.  Van  de Ven,  {\it   Compact  Complex Surfaces}, A Series of Modern Surveys in Mathematics, {\bf 4}, Springer, 2004.

\bibitem{Bi} C. Birkar, {\sl   Singularities of linear systems and boundedness of Fano
varieties},
 Annals of Mathematics, {\bf  193}, No. 2 pp. 347-405.


\bibitem{BL} C. Birkenhake,  H.  Lange,  Complex Tori.   Birkhauser,  Boston Basel Stutgart, 1999.

\bibitem{CAV} C. Birkenhake,  H.  Lange,  Complex Abelian Varieties, 2nd edition.
 Springer-Verlag Berlin Heidelberg New York, i 2004


\bibitem{BM}   S. Bochner,  D. Montgomery,  {\it  Groups on analytic manifolds}. Ann. of Math. (2) {\bf 48} (1947). 659–669.

\bibitem {BF} R. Brauer, W. Feit, {\it An analogue of Jordan's theorem in characteristic}
 $p$. Ann. of Math. (2) {\bf 84} (1966), 119--131.



 \bibitem{C92} F. Campana,  
{\it Connexit\'{e} rationnelle des vari\'{e}t\'{e}s de Fano}.
Annales scientifiques de l’É.N.S. 4e s\'{e}rie,  {\bf  25},  5, (1992), p. 539--545.



\bibitem{CP} F. Campana, Th. Peternell,  {\it Cycle spaces}. In: Encyclopaedia of Mathematical Sciences, vol. {\bf 74},  Several Complex variables, VII,  Sheaf-Theoretical methods in Complex Analysis, Springer Verlag, 1984, pp. 319--349.





\bibitem {CS}   Y. Chen, C. Shramov,  {\it  Automorphisms of surfaces over fields of positive characteristic},
arXiv:2106.15906.  

\bibitem{Collins}   M.J.   Collins, {\sl  On Jordan's theorem for complex linear groups}. J. Group Theory   {\bf 10 } (2007), no. 4, 411–423. 


\bibitem  {CPS}  B. Csik\'{o}s, L. Pyber, E.Szab\'{o},
{\it Diffeomorphism Groups of Compact 4-manifolds are not always Jordan},
arXiv:1411.7524. 

\bibitem{DG} G.  Dethloff,  H. Grauert,  {\it Seminormal Complex Spaces}. In: Encyclopedia of Mathematical Sciences, vol. {\bf 74}, Several Complex variables, VII,  Sheaf-Theoretical methods in Complex Analysis, Springer Verlag, Berlin, 1984, pp. 206--219.




\bibitem {Fi}   G.  Fischer,  Complex Analytic Geometry. Lecture Notes in Math., {\bf 538}, Springer-Verlag, New York, 1976.

\bibitem {FG}  W.  Fischer, H.  Grauert,
{\it  Lokal-triviale Familien kompakter komplexer Mannigfaltigkeiten}.
Nachr. Akad. Wiss. Gottingen Math.-Phys. Kl. II (1965), 89--94.



\bibitem{Fu78}  A. Fujiki, {\it On Automorphism Groups of Compact Complex Manifolds}. 
Inventiones Math. {\bf 44} (1978), 225-258.

\bibitem{Fu80}  A. Fujiki, {\it On the Minimal models of Complex Manifolds}. Math. Ann. {\bf 253} (1980), 111--128.

\bibitem{Fu81}  A. Fujiki, {\it Deformation of Uniruled manifolds}. Publ. RIMS, Kyoto Univ. {\bf 17}(1981), 687-702.



\bibitem{Golota}  A.  Golota,
{\it Jordan property for groups of bimeromorphic automorphisms of compact Kähler threefolds}. Mat. Sbornik, to appear;    arXiv:2112.02673. 



\bibitem{Graf}
P. Graf, M. Schwald,  {\it On the Kodaira problem for uniruled Kähler spaces}. Ark. Mat. {\bf 58}    (2020), no. 2, 267--284. 


\bibitem{GR} R.C. Gunning, H. Rossi,  Analytic Functions of Several complex variables. Prenitce-Hall, 1965.

\bibitem {HP} A. H\"oring, Th. Peternell, {\sl  Minimal models for Kähler threefolds}. 
Invent. Math.   {\bf 203},    (2016), p. 217--264.

\bibitem {Hu}    Fei Hu, {\sl
Jordan property for algebraic groups and automorphism groups of projective
varieties in arbitrary characteristic}. 
Indiana Univ. Math. J. {\bf 69} (2020), no. 7, 2493--2504.



\bibitem{Huy}  D. Hyubrechts, {\it Complex Geometry.  An Introduction}, Springer-Verlag,  Berlin, Heidelberg,  2005.




\bibitem{Jordan} C. Jordan,  {\it  M\'{e}moire  sur des \'{e}quations differentielles  lin\'{e}ares   \`{a} int\'{e}grale alg\'{e}brique}.  Crelle's
journal  {\bf  84}, (1878)   89-215: \OE uvres II, 13-140.


\bibitem{Kempf} G.R. Kempf, {\it
  Complex Abelian Varieties and Theta functions},  Springer-Verlag,  Berlin Heidelberg New York, 1991.

\bibitem{Kim} Jin Hong   Kim,  {\it Jordan property and automorphism groups of normal compact Kähler varieties}.  Commun. Contemp. Math. {\bf 20} (2018), no. 3, 1750024-1-- 1750024-9.

\bibitem{Kollar}  J. Kollar,  {\it Rational curves  on algebraic varieties}, Springer-Verlag, 1996.

\bibitem {Kuz}  A. Kuznetsova, {\it  Automorphisms of quasi-projective surfaces over fields of finite characteristic}.  J.  Algebra,
{\bf 595}, 1, (2022),  271--278.

\bibitem{Lang} S. Lang, Algebra, Revised 3rd edition.  GTM {\bf 211}, Springer Science, 2002.

\bibitem{LP} M.J. Larsen, R. Pink, {\it Finite subgroups of algebraic groups}. J. Amer. Math. Soc. {\bf 24} (2011), 1105--1158.

\bibitem{Levi}  E.E. Levi,  {\it   Studii sui punti singolari essenziali delle
funzioni analitiche di due 0 piu variabili complesse}.
Annali di mat. pur. apple, {\bf 17:3} (1910), 61--87.


\bibitem{Lie}  D. Lieberman, {\sl Compactness of the Chow scheme: applications to automorphisms and deformations of     K\"{a}hler     manifolds}. In: Fonctions de plusieurs variables complexes, III (Sém. François Norguet, 1975–1977), pp. 140–186, Lecture Notes in Math. {\bf 670}, Springer, Berlin, 1978.


\bibitem{MS}  L. N. Mann, J. C. Su, {\sl Actions of elementary p-groups on manifolds}.Trans. Amer. Math. Soc. {\bf 106}
(1963), 115--126.

	
\bibitem{MengZhang} Sh. Meng, D.-Q. Zhang, {\sl Jordan property for non-linear algebraic groups and projective varieties}.  Amer. J. Math.  {\bf 140:4} (2018), 1133--1145.

\bibitem{MPZ}Sh. Meng, F. Perroni, D.-Q. Zhang, 
{\sl Jordan property for automorphism groups of compact spaces in Fujiki's class $\mathrsfs C$}.
J. Topology {\bf 15:2} (2022), 806--814.

	
	

\bibitem{Mi88} Y. Miyaoka, {\sl On the Kodaira dimension of minimal threefolds}. Math.
Ann. {\bf 281}   (1988) 325--332.
\bibitem{Mobuchi}  T. Mobuchi, {\it Invariant $\beta$  and uniruled threefolds}.
J. Math. Kyoto Univ., 
{\bf 22},  3, (1982) , 503--554.


\bibitem{Mum66}   D. Mumford, {\sl On the equations defining abelian varieties I}. Invent. Math. {\bf 1} (1966),
287--354.



\bibitem{MR10}  I. Mundet i Riera,
{\sl  Jordan's theorem for the diffeomorphism group of some manifolds}. 
Proc. Amer. Math. Soc. {\bf  138},   (2010), no. 6, 2253-2262.


\bibitem{MR13}  I. Mundet i Riera,
{\sl Finite group actions on manifolds without odd cohomology}.
arXiv:1310.6565.



\bibitem{MR15}  I. Mundet i Riera, A. Turull, {\sl Boosting an analogue of Jordan's theorem for finite groups}. Adv. Math. {\bf 272} (2015), 820--836.


\bibitem{MR16}  I. Mundet i Riera,
{\sl Finite group actions on 4-manifolds with nonzero Euler characteristic}. 
Math. Z. {\bf 282} (2016), no. 1-2, 25--42.


\bibitem{MR17}  I. Mundet i Riera,
{\sl  Finite groups acting symplectically on  $ T^2\times S^2.$} 
Trans. Amer. Math. Soc. {\bf 369} (2017), no. 6, 4457--4483.

\bibitem{MR17b}  I. Mundet i Riera,
{\sl Non Jordan groups of diffeomorphisms and actions of compact Lie groups on
manifolds.}
Transform. Groups {\bf 22} (2017), no. 2, 487--501.


\bibitem{MR18}  I. Mundet i Riera,
{\sl  Finite group actions on homology spheres and manifolds with nonzero Euler
characteristic.} 
J. Topology {\bf 12} (2019), no. 3, 744--758.


 

\bibitem{Narasimhan}  R. Narasimhan,   Introduction to the theory of analytic spaces.
Lecture Notes in mathematics {\bf 25}, Springer-Verlag, 1966. 



\bibitem{OV} È. B.   Vinberg, A. L.    Onishchik,  {\it  Lie Groups and Lie Algebras I: Foundations of Lie Theory}. In:Encyclopaedia of Mathematical Sciences Vol. {\bf 20}	
Springer-Verlag, Berlin, 1993,  pp. 1-94.



\bibitem{Pe} Th. Peternell, {\it Modifications}.  In: Encyclopaedia of Mathematical Sciences, vol. {\bf 74}, Several Complex variables, VII,  Sheaf-Theoretical methods in Complex Analysis, Springer Verlag,  Berlin, 1984, pp. 286-319.

\bibitem{PR} Th. Peternell, P.Remmert, 
{\it  Differential calculus, Holomorphic maps and linear structures on complex spaces} In: Encyclopaedia of Mathematical Sciences, vol. {\bf 74}, Several Complex variables, VII,  Sheaf-Theoretical methods in Complex Analysis, Springer Verlag,  Berlin, 1984, pp. 286-319.

\bibitem{Pop} V.L.  Popov,  {\it On the Makar-Limanov, Derksen invariants, and finite automorphism groups
of algebraic varieties}. In: Affine algebraic geometry, 289--311, CRM Proc. Lecture Notes {\bf 54}, Amer. Math. Soc., Providence, RI, 2011.

\bibitem{Pop14} V.L.  Popov,  
{\sl Jordan groups and automorphism groups of algebraic varieties.} 
Automorphisms in birational and affne geometry, 185--213, Springer Proc. Math. Stat.,
{\bf 79}, Springer, Cham, 2014.


\bibitem{Pop15} V.L.  Popov,  
{\sl Finite subgroups of diffeomorphism groups.}  Tr. Mat. Inst. Steklova {\bf   289} (2015), 235--241.
Proc. Steklov Inst. Math. {\bf   289} (2015), no. 1, 221--226.




\bibitem{Pop18} V.L.  Popov,  {\it  The Jordan property for Lie groups and automorphism groups of complex spaces.} Math. Notes. {\bf 103} (2018), no. 5--6, 811--819. 



\bibitem{PS14}  Yu. Prokhorov, C. Shramov, {\it Jordan property for groups of birational selfmaps}. Compositio
Math. {\bf 150} (2014),  2054--2072.



\bibitem{PS16}  Yu. Prokhorov, C. Shramov,
{\sl Jordan property for Cremona groups.} 
Amer. J. Math. 1{\bf 38 } (2016), no. 2, 403--418.

\bibitem{PS17}  Yu. Prokhorov, C. Shramov, {\sl
Jordan constant for Cremona group of rank 3}. 
Mosc. Math. J. {\bf 17} (2017), no. 3, 457--509.



\bibitem{PS18} Yu.  Prokhorov, C. Shramov, {\it Finite groups of birational selfmaps of threefolds}. Math. Res. Lett.  {\bf 25}  (2018), no. 3, 957--972.




\bibitem{PS19} Yu.  Prokhorov, C. Shramov, 
{\sl  Automorphism groups of Moishezon threefolds.} 
Mat. Zametki {\bf 106} (2019), no.  4, 636--640;  Math. Notes {\bf 106}  (2019), no.
3-4, 651--655.

\bibitem{PS20} Yu.  Prokhorov, C. Shramov, {\it Bounded  automorphism  groups of complex compact surfaces.}
Mat. Sb. 
{\bf 211} (2020), no. 9, 105--118; Sb. Math., {\bf 211:9} (2020), 1310--1322.

\bibitem{PS19-2} Yu.  Prokhorov, C. Shramov, 
{\it  Finite groups of bimeromorphic selfmaps of uniruled  K\"ahler threefolds}.
Izv. Ross. Akad. Nauk Ser. Mat.  {\bf 84} (2020), no. 5, 169--196; Izvestiya: Mathematics {\bf 84:5} (2020), 978--1001.



\bibitem{PS21-1} Yu.  Prokhorov,  C. Shramov, 
{\it Automorphism groups of compact complex surfaces. }
Int. Math. Res. Not. IMRN, 2021, no. 14, 10490--10520.

\bibitem{PS21} Yu.  Prokhorov, C. Shramov, 
{\it  Finite groups of bimeromorphic selfmaps of non-uniruled Kähler threefolds}, Mat. Sbornik, {\bf 213},  n. 12,  (2022),  86-108.
arXiv:2110.05825 

\bibitem{PS21-c} Yu.  Prokhorov,  C. Shramov, 
{\it Jordan property for Cremona group over a finite field},  
Proceedings of the Steklov Inst. of Math. {\bf 320} (2023), to appear; arXiv:2111.13367.



\bibitem{Ra}  C. P. Ramanujam,  {\it  On a certain purity theorem}, J. Indian Math. Soc. (N.S.) {\bf 34}
(1970), 1–9 (1971).

\bibitem{Re} R. Remmert, {\it  Holomorphe und meromorphe Abbildungen komplexer Räume}.  Math. Ann.  {\bf  133} (1957), 328-370.


\bibitem{Serre56}   J-P. Serre, {\sl G\'{e}ometrie alg\'{e}brique et g\'{e}ometrie  analytique}. Ann, Inst. Fourier {\bf 6}  (1956), 175-189.

\bibitem{Serre20}   J.-P. Serre,   {\sl
 Bounds for the orders of the finite subgroups of G(k)}.  In:
Group Representation Theory, eds. M. Geck, D. Testerman, J. Th\'{e}venaz, EPFL Press, Lausanne 2006.

\bibitem{Serre1} J-P. Serre, {\sl A Minkowski-style bound for the orders of the finite subgroups of
the Cremona group of rank 2 over an arbitrary field}. Moscow Math. J. {\bf 9}
(2009), no. 1, 183--198.
\bibitem{Serre} J.-P. Serre,   {\sl Finite Groups: An Introduction}, 2nd edition.  International Press, Sommerville, MA,  2022.




\bibitem{kostya} C. Shramov, {\sl Fiberwise bimeromorphic maps of conic bundles}.  Internat. J. Math. {\bf 30} (2019), no. 11, 1950059, 12 pp.



\bibitem{ShV} C. Shramov, V. Vologodsky,  {\sl Automorphisms of pointless surfaces}. Arxiv:1807.06477.


\bibitem{ShVb} C. Shramov, V. Vologodsky, {\it  Boundedness for finite subgroups of linear algebraic groups. }
Trans. Amer. Math. Soc. {\bf 374} (2021), no. 12, 9029--9046.


\bibitem{Siu}  {\sl Extension of meromorphic maps  into K\"ahler manifolds}, Annals of Math.  {\bf 102}, \ (1975), 421--462.


\bibitem{Voisin}  C.  Voisin, {\it  Hodge theory and complex algebraic geometry. I}. Translated from the French original by Leila Schneps. Cambridge Studies in Advanced Mathematics {\bf 76}.  Cambridge University Press, Cambridge, 2002. 



\bibitem {W}   J. Winkelmann, {\it Realizing countable groups as automorphism groups of Riemann surfaces.}  Doc. Math. {\bf 6} (2001), 413--417. 

\bibitem{Yasinsky}  E.  Yasinsky,   {\it  The Jordan constant   for Cremona group of rank 2}.
Bull. Korean Math. Soc. {\bf    54}  (2017), No. 5, pp. 1859--1871.





\bibitem{Zar14} Yu.G. Zarhin, {\sl Theta groups and products of abelian and rational varieties}. Proc. Edinburgh Math. Soc. {\bf 57:1} (2014), 299--304.



\bibitem{Zar15} Yu.G. Zarhin, {\sl Jordan groups and elliptic ruled surfaces}. 
Transformation Groups {\bf 20} (2015), no. 2, 557--572.


\bibitem{Zar19}Yu.G. Zarhin, {\it Complex tori, theta groups and their Jordan properties}.
In: Algebra, Number Theory and Algebraic Geometry.
Tr. Mat. Inst. Steklova, 307 (2019),  32–62;    
   Proc. Steklov Inst. Math., 307 (2019), 22–50.



\bibitem{Zim}  B. Zimmermann, {\it On Jordan type bounds for finite groups acting on compact 3-manifolds}.
Arch. Math. (Basel) {\bf 103} (2014), no. 2, 195-200.
\end{thebibliography}
 \end{document}